\newtheorem{theorem}{Theorem}
\numberwithin{theorem}{section}
\newtheorem{lemma}[theorem]{Lemma}
\newtheorem{proposition}[theorem]{Proposition}
\newtheorem{corollary}[theorem]{Corollary}
\theoremstyle{definition}
\newtheorem{definition}[theorem]{Definition}
\newtheorem{remark}[theorem]{Remark}
    \def\r{{\mathbb R}}
    \def\c{{\mathbb C}}
\newcommand{\R}{\mathbb{R}}
\newcommand{\Z}{\mathbb{Z}}
\newcommand{\bfo}{\mathbf{O}}
\renewcommand{\i}{{\mathrm{i}}}
\newcommand{\fourier}{\mathscr{F}}
\newcommand{\osc}{\mathscr{L}}
\newcommand{\bfpsi}{\mathbf{\Psi}}
\newcommand{\ii}{\mathrm{i}}
\newcommand{\eps}{\varepsilon}
\renewcommand{\d}{{\mathrm{d}}}
\newcommand{\Nt}{\tfloor{N/2}}
\newcommand{\Nmt}{\tceil{N/2} - 1}
\newcommand{\Npt}{\tceil{N/2}}
\newcommand{\tfloor}[1]{\lfloor #1 \rfloor}
\newcommand{\tceil}[1]{\lceil #1 \rceil}
\newcommand{\tabs}[1]{\lvert #1 \rvert}
\newcommand{\tnorm}[1]{\lVert #1 \rVert}
\newcommand{\tscalar}[1]{\langle #1 \rangle}
\newcommand{\ve}[1]{\mathbf{#1}}
\DeclareMathOperator{\len}{width}
\DeclareMathOperator{\dime}{dim}
\DeclareMathOperator{\degr}{deg}
\newcommand{\formula}[2][nolabel]%
{%
 \ifthenelse{\equal{#1}{nolabel}}%
 {\begin{equation*}\begin{aligned} #2 \end{aligned}\end{equation*}}%
 {%
  \ifthenelse{\equal{#1}{}}%
  {\begin{equation}\begin{aligned} #2 \end{aligned}\end{equation}}%
  {\begin{equation}\label{#1}\begin{aligned} #2 \end{aligned}\end{equation}}%
 }%
}
\title{Minimal Hermite-type eigenbasis of the discrete Fourier transform}
\author{ 
{Alexey Kuznetsov\footnote{Dept.\@ of Mathematics and Statistics,  York University,
4700 Keele Street, Toronto, ON, M3J 1P3, Canada.   Email: kuznetsov@mathstat.yorku.ca}} , \;
{Mateusz Kwa{\'s}nicki\footnote{Faculty of Pure and Applied Mathematics, Wroc{\l}aw University of Technology, Wybrze{\.z}e Wyspia{\'n}skiego 27, 50-370 Wroc{\l}aw, Poland. Email: mateusz.kwasnicki@pwr.edu.pl}}
 }
\date{\today}
\begin{document}
\maketitle

\begin{abstract}
There exist many ways to build an orthonormal basis of $\r^N$, consisting of the eigenvectors of the discrete Fourier transform (DFT). In this paper we show that there is only one such orthonormal eigenbasis of the DFT that is optimal in the sense of
an appropriate uncertainty principle. Moreover, we show that these optimal eigenvectors of the DFT are direct analogues of the Hermite functions, that they also satisfy a three-term recurrence relation and that they converge to Hermite functions as $N$ increases to infinity. 
\end{abstract}
{\vskip 0.15cm}
 \noindent {\it Keywords}:  eigenvectors, discrete Fourier transform, orthogonal basis,  Hermite functions, uncertainty principle\\
 \noindent {\it 2010 Mathematics Subject Classification }: Primary 42A38, Secondary 65T50

%
%

\section{Introduction and the main results}
\label{sec:intro}

The following question about the discrete Fourier transform (DFT) has received a lot of attention in the research literature: How can we construct an orthonormal basis of $\r^N$ consisting of the eigenvectors of the DFT, which would be analogous to Hermite functions and would have some other desirable properties? In terms of ``other desirable properties", we might ask for 
this eigenbasis to be explicit or, at least, easily computable numerically; it would also be beneficial if the eigenbasis was in some sense unique. The reader can find some examples of such constructions in the papers by  Dickinson and Steiglitz \cite{Dickinson}, Grunbaum \cite{Grunbaum}, Mehta  \cite{Mehta1987} and Pei and Chang \cite{Pei_Chang_2016}. 

Let us state the main properties of Hermite functions that will be used later. Hermite functions are defined as follows
\begin{equation}\label{def_Hermite_psin}
\psi_n(x):=(-1)^n (\sqrt{\pi} 2^n n!)^{-1/2}  e^{x^2/2} \frac{\d^n}{\d x^n} e^{-x^2} \;\;\; n\ge 0, 
\end{equation}
One can also write $\psi_n(x)=(\sqrt{\pi} 2^n n!)^{-1/2}  e^{-x^2/2} H_n(x)$,  where $H_n$ are Hermite polynomials.  
It is well-known that Hermite functions form a complete orthonormal basis of $L_2(\r)$ and that they are the eigenfunctions 
of the continuous Fourier transform operator ${\mathcal F}$, defined as
\begin{equation}\label{def_continuous_Fourier}
({\mathcal F} g)(x)=\frac{1}{\sqrt{2\pi}}\int_{\r} e^{-\i x y} g(y) \d y,
\end{equation}
so that $({\mathcal F} \psi_n)(x)=(-\i)^n \psi_n(x)$.

Many constructions of the eigenbasis of the DFT start either with the fact that the Hermite functions are the 
eigenfunctions of the operator ${\mathcal L}:=\d^2/\d x^2-x^2$, or that they satisfy the 
recursion relations
$$
\psi_n'(x)+x \psi_n(x)=\sqrt{2n} \psi_{n-1}(x), \;\;\; \psi_n'(x)-x \psi_n(x)=-\sqrt{2(n+1)} \psi_{n+1}(x). 
$$
The goal, typically, is to find a discrete counterpart of the operator ${\mathcal L}$ or or the operators 
$\d/\d x+x$ and $\d /d x-x$ and to use them to construct discrete analogues of Hermite functions. A major problem 
with this approach is that there are many ways to approximate a differential operator by a matrix, and this leads to many different constructions and results in lack of uniqueness of the ``canonical eigenbasis" of the DFT.

However, we do not need to be restricted to using differential operators if we want to characterize Hermite functions. It turns out that a much more fruitful approach, in terms of finding discrete analogues of Hermite functions, is to use Hardy's uncertainty principle. This result was derived back in 1933 and it states that if both functions $f$ and ${\mathcal F}f$ are $O(|x|^m e^{-x^2/2})$ for large $x$ and some $m$, then $f$ and ${\mathcal F}f$ are finite linear combinations of Hermite functions (see \cite[Theorem 1]{Hardy}). The following theorem follows easily from Hardy's result.

\begin{theorem}[{\bf Uncertainty principle characterization of Hermite functions}]\label{thm_Hermite}
${}$
\newline
Assume that the functions $f_n : \r \mapsto \r$ satisfy  the following three conditions:
\begin{itemize}
\item[(i)] $f_n$ are the eigenfunctions of the Fourier transform: $({\mathcal F} f_n)(x)=(-\i)^n f_n(x)$ for  $n\ge 0$
and $x\in \r$;
\item[(ii)] $\{f_n\}_{n\ge 0}$ form an orthonormal set in $L_2(\r)$, that is  $\int_{\r} f_n(x)f_m(x)\d x=\delta_{n,m}$;
\item[(iii)] for every $n\ge 0$ we have $ x^{-n-1} e^{x^2/2} f_n(x) \to 0$ as $|x|\to \infty$. 
\end{itemize}
Then for all $n\ge 0$ we have  $f_n \equiv \psi_n$ or $f_n \equiv -\psi_n$. 
\end{theorem}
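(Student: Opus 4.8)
The plan is to obtain everything from Hardy's uncertainty principle: hypotheses (i) and (iii) verify its assumptions, and hypothesis (ii) fixes the sign. Fix $n \ge 0$. From (iii) there is a constant $C_n$ with $|f_n(x)| \le C_n (1 + |x|)^{n+1} e^{-x^2/2}$ for all $x \in \r$, so in particular $f_n \in L_1(\r) \cap L_2(\r)$ and $\mathcal{F} f_n$ is a well-defined continuous function. By (i), $|(\mathcal{F} f_n)(x)| = |f_n(x)|$, so $\mathcal{F} f_n$ obeys the same bound. Hardy's theorem (\cite[Theorem 1]{Hardy}), applied with $m = n+1$, then gives that $f_n$ is a finite linear combination of Hermite functions; equivalently, $f_n(x) = P_n(x) e^{-x^2/2}$ for some polynomial $P_n$.

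Next I would sharpen the degree of $P_n$. Since $P_n(x) = e^{x^2/2} f_n(x)$, condition (iii) says exactly that $x^{-n-1} P_n(x) \to 0$ as $|x| \to \infty$, which forces $\degr P_n \le n$. Using $\psi_k(x) = (\sqrt{\pi}\,2^k k!)^{-1/2} e^{-x^2/2} H_k(x)$ and the fact that $H_0, \dots, H_n$ span all polynomials of degree at most $n$, this means $f_n \in \mathrm{span}\{\psi_0, \dots, \psi_n\}$ for every $n \ge 0$. (Hypothesis (i) would further restrict the expansion to indices $k$ with $k \equiv n \pmod 4$, but this is not needed.)

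Finally I would run an induction on $n$ using orthonormality. For $n = 0$ the previous step gives $f_0 = c_0 \psi_0$, and since $f_0$ and $\psi_0$ are real with $\|f_0\| = 1$ by (ii), we get $c_0 = \pm 1$. Assuming $f_k \equiv \pm \psi_k$ for all $k < n$, write $f_n = \sum_{k=0}^{n} c_k \psi_k$; then for each $k < n$ condition (ii) gives $0 = \int_\r f_n f_k \, \d x = \pm c_k$, so $c_k = 0$, whence $f_n = c_n \psi_n$ with $|c_n| = 1$ and $c_n$ real, i.e.\ $c_n = \pm 1$. This closes the induction and proves the claim.

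The argument has no serious obstacle once Hardy's theorem is granted; it is the only non-elementary input, and here it is taken as known. The two points requiring a little care are that Hardy's theorem is legitimately applied to $\mathcal{F} f_n$ — this works because (i) yields $|\mathcal{F} f_n| = |f_n|$, even though $\mathcal{F} f_n$ need not be real — and that one should extract from (iii) the sharp bound $\degr P_n \le n$ rather than the weaker $\degr P_n \le n+1$ coming from Hardy alone, since it is the sharp bound that makes the orthogonality induction go through without circularity.
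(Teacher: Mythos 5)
Your proof is correct and fills in exactly the argument the paper merely gestures at: the paper states that Theorem~\ref{thm_Hermite} ``follows easily from Hardy's result'' and gives no further details, so your write-up is the natural elaboration rather than a competing approach. The structure — apply Hardy's theorem using (i) and (iii) to get $f_n = P_n e^{-x^2/2}$ with $P_n$ polynomial, read off $\degr P_n \le n$ directly from (iii), then induct on $n$ using (ii) to kill the lower-order Hermite coefficients — is the obvious way to make the paper's claim precise, and each step is sound. Two small cosmetic remarks. First, the opening claim that (iii) yields $|f_n(x)| \le C_n(1+|x|)^{n+1} e^{-x^2/2}$ \emph{for all} $x$ is more than (iii) gives without first knowing $f_n$ is locally bounded; but this is harmless because Hardy's theorem, as cited in the paper, only requires the Gaussian-type bound ``for large $x$,'' which is exactly what (iii) provides. (If one did want the global bound, it follows after noting $f_n \in L_1 \cap L_2$ and hence, by (i), that $f_n$ agrees a.e.\ with the continuous function $(-\i)^{-n}\mathcal F f_n$.) Second, your parenthetical observation is accurate: condition (i) alone would restrict the expansion of $f_n$ to Hermite indices $\equiv n \pmod 4$, which together with (ii) would also close the induction even from the weaker bound $\degr P_n \le n+1$; your route via the sharp degree bound is simply the cleaner of the two.
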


The main goal of our paper is to find a set of vectors $\{\ve T_n\}_{0\le n < N-1}$ that would 
be characterized uniquely by the three conditions similar to items (i), (ii) and (iii) in Theorem 
\ref{thm_Hermite}. It is clear what should be the analogues of conditions (i) and (ii) in the discrete setting. 
It turns out that condition (iii), which essentially expresses Hardy's uncertainty principle, has a counterpart in the form 
of {\it a discrete uncertainty principle} (see \cite{Donoho_1989,Tao}), which states that the number of non-zero elements of a vector and of its discrete Fourier transform cannot be too small (the signal and its Fourier transform cannot be too localized). 

In order to state our results, first we need to present necessary definitions and notation.
We denote by $\lfloor x \rfloor$ the floor function and by $\lceil x \rceil$ the ceiling function.  Let $N$ be a positive integer and define
$$
I_N:=\{ k \in {\mathbb Z} \; : \; -\Npt + 1\le k \le \Nt\}. 
$$
We consider the elements of a vector $\ve a \in \c^N$ as being labelled by the set $I_N$. 
Given a vector $\ve a$, we define an $N$-periodic function $ a : \Z \mapsto \c$ by specifying 
$a(k)=\ve a(k)$ for $k\in I_N$ and extending it by periodicity to all of $\Z$. This correspondence between vectors and $N$-periodic functions is clearly a bijection, and we will often view vectors as $N$-periodic functions and $N$-periodic functions as vectors, depending on situation; we will use the same notation $\ve a$ to denote both of these objects. 

\label{page_N_periodic}

 The dot product between vectors ${\ve a}$ and $\ve b$ is denoted by $\tscalar{\ve a, \ve b}$ and the norm
$\|\ve a\|$ is defined by $\| \ve a\|=\sqrt{\tscalar{\ve a, \bar{\ve a}}}$. The (centered) discrete Fourier transform $\fourier$  is defined as a linear map that sends a vector 
$\ve a \in \c^N$ to $\ve b=\fourier \ve a
 \in \c^N$ according to  the rule
\formula{
\ve b(l) & = \frac{1}{\sqrt{N}} \sum_{k \in I_N} e^{-\i \omega k l} \ve a(k), \;\;\; l \in I_N,
}
where we denoted $\omega:=2\pi/N$. With the above normalisation, $\fourier$ is a unitary operator on $\c^N$.

The next definition will play a crucial role in our paper. 

\begin{definition}
For a vector $\ve a \in \c^N$ we define $\len(\ve a)$ to be the integer $n \in \{0,1,\dots,\Nt\}$ such that ${\ve a}(n) \ne 0$ or ${\ve a}(-n) \ne 0$, but ${\ve a}(k) = 0$ when $n < \tabs{k} \le \Nt$.
\end{definition}

To illustrate the concept of the width of a vector, we provide the following examples:
\begin{align*}
&{\textnormal { if }} \;  {\ve a}=[0, 0, 1, \underline{2}, 3, 0, 0] \; {\textnormal { then }}  \len(\ve a)=1;\\
&{\textnormal { if }} \;  {\ve a}=[0, 0, 0, \underline{1}, 2, 3, 0] \; {\textnormal { then }} \len(\ve a)=2;\\
&{\textnormal { if }} \;  {\ve a}=[0, 1, \underline{2}, 3, 0, 0] \; {\textnormal { then }} \len(\ve a)=1;\\
&{\textnormal { if }} \;  {\ve a}=[1, 2, \underline{3}, 0, 0, 0] \; {\textnormal { then }}  \len(\ve a)=2;\\
&{\textnormal { if }} \;  {\ve a}=[0, 0, \underline{0}, 0, 0, 1] \; {\textnormal { then }}  \len(\ve a)=3.
\end{align*}
For clarity, we have underlined the term $a(0)$. The next theorem is our first main result.

\begin{theorem}\label{thm_main1}
For every $N\ge 2$ there exist unique (up to change of sign) vectors $\{\ve T_n\}_{0\le n<N}$ in $\r^N$ that satisfy the following three conditions:
\begin{itemize}
\item[(i)] $\ve T_n$ are the eigenvectors of the DFT: $\fourier \ve T_n=\lambda_n \ve T_n$ for $0\le n < N$,
where $\lambda_n=(-\i)^n$ for $0\le n < N-1$; 
\item[(ii)] $\{\ve T_n\}_{0\le n <N}$ form an orthonormal basis in $\r^N$;
\item[(iii)] $\len(\ve T_n)\le \lfloor (N+n+2)/4 \rfloor $ for $0\le n<N$.
\end{itemize}
\end{theorem}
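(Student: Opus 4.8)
The plan is to translate conditions (i)--(iii) into a statement about how the eigenspaces $E_\lambda$ of $\fourier$ (for $\lambda$ a fourth root of unity) sit inside the nested family of ``short'' subspaces $W_m:=\{\ve a\in\c^N:\len(\ve a)\le m\}$, and then to build the vectors $\ve T_n$ one at a time by Gram--Schmidt along that family. Write $w_n:=\lfloor(N+n+2)/4\rfloor$ for the bound in (iii). First I would use the parity operator $P$, $(P\ve a)(k)=\ve a(-k)$: since $\fourier^2=P$, the splitting $\c^N=\mathcal E\oplus\mathcal O$ into $P$-symmetric and $P$-antisymmetric vectors is $\fourier$-invariant, $\fourier$ has eigenvalues $\pm1$ on $\mathcal E$ and $\pm\i$ on $\mathcal O$, and each $E_\lambda$ has a real basis because $\fourier\overline{\ve a}=\lambda\overline{\ve a}$ whenever $\fourier\ve a=\lambda\ve a$. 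Next, set $V_m:=\{\ve a:\len(\ve a)\le m,\ \len(\fourier\ve a)\le m\}$. The crucial remark is that $V_m$ is $\fourier$-invariant --- if $\ve a\in V_m$ then $\len(\fourier\ve a)\le m$ and $\len(\fourier^2\ve a)=\len(P\ve a)=\len(\ve a)\le m$ --- hence it is a sum of eigenspaces and $V_m\cap E_\lambda=W_m\cap E_\lambda$ for all $\lambda$. Encoding a width-$\le m$ vector by the Laurent polynomial $q(w)=\sum_{|k|\le m}\ve a(k)w^k$, the condition $\len(\fourier\ve a)\le m$ says that the degree-$\le2m$ polynomial $w^mq(w)$ vanishes at the $N$-th roots of unity $e^{-2\pi\i l/N}$ with $m<|l|\le\Nt$; a Vandermonde computation (a sharp form of the discrete uncertainty principle) then gives $\dim V_m=\max(0,4m+2-N)$ for $m<\Nt$ and $V_{\Nt}=\c^N$, and the same computation inside $\mathcal E$ and inside $\mathcal O$ gives $\dim(V_m\cap\mathcal E)$ and $\dim(V_m\cap\mathcal O)$. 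In particular no eigenvector is shorter than $w_0$.

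Then I would match these dimensions with the combinatorics of condition (iii). For a fourth root of unity $\lambda$ put $r_m^{\lambda}:=\#\{\,0\le n<N:w_n\le m,\ \lambda_n=\lambda\,\}$, where $\lambda_{N-1}$ is \emph{defined} to be the unique value making $r_{\Nt}^{\lambda}=\dim E_\lambda$ for all four $\lambda$ at once (that such a value exists and is consistent is a one-line check against the known eigenvalue multiplicities of $\fourier$, i.e.\ McClellan--Parks). Two elementary facts about $w_n$ drive the argument: $\{n:w_n=v\}$ is a set of at most four consecutive integers, hence of indices with pairwise distinct residues mod $4$; consequently $\sum_\lambda r_m^{\lambda}=\#\{n:w_n\le m\}=\dim V_m$, and each $r_m^{\lambda}$ is non-decreasing with unit increments. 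On the other side, projecting $W_m\cap E_\lambda$ onto the coordinate at the top of its support shows that $m\mapsto\dim(W_m\cap E_\lambda)$ is also non-decreasing with increments at most $1$. Since the four non-decreasing sequences $m\mapsto\dim(W_m\cap E_\lambda)$ increase by at most $1$ per step while their sum $\dim V_m$ rises by $4$ at every step throughout $m_0<m<\Nt$ (with $m_0:=w_0$) and ends at $\dim V_{\Nt}=N=\sum_\lambda\dim E_\lambda$, each of them is forced to increase by exactly $1$ at every such step. Combined with a parity remark --- within $\mathcal E$ (resp.\ $\mathcal O$) the difference of the two eigenvalue-counts has the known parity of their sum --- this pins down all four dimensions $\dim(W_m\cap E_\lambda)$ \emph{except for one bit of information:} which of $+1,-1$ (resp.\ $\i,-\i$) is the eigenvalue of the shortest eigenvector(s) in $\mathcal E$ (resp.\ $\mathcal O$).

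Resolving that last ambiguity is the main obstacle. At width $m_0$ the Vandermonde count above has zero slack, so the shortest eigenvectors are given by one \emph{explicit} generating Laurent polynomial: $q(w)$ is a scalar multiple of
\[
\frac{w^{-m_0}\,(w^N-1)}{(w-1)\prod_{1\le|l|\le m_0}\bigl(w-e^{-2\pi\i l/N}\bigr)}.
\]
One then has to read off the eigenvalue of the corresponding vector $\ve a$ --- most efficiently by computing $(\fourier\ve a)(0)=N^{-1/2}q(1)$ and comparing it with $\ve a(0)$, which amounts to a closed-form, Gauss-sum-type evaluation of that ratio --- and check that the shortest eigenvectors carry precisely the eigenvalues $1,-\i,-1,\i$ in that cyclic order (truncated to the $4m_0+2-N\in\{1,2,3,4\}$ of them that exist). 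This ``bookkeeping matches reality'' step is where I expect the genuine computation to lie; everything around it is linear algebra and elementary number theory. Granting it, $\dim(W_m\cap E_\lambda)=r_m^{\lambda}$ for every $m$ and every $\lambda$.

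Finally I would assemble the basis. Construct $\ve T_0,\ve T_1,\dots$ by induction on $n$: when $\ve T_n$ is to be chosen, the previously constructed vectors that share its eigenvalue $\lambda_n$ all lie in $W_{w_n}\cap E_{\lambda_n}$, and the block structure of $w_n$ together with the identity $\dim(W_m\cap E_\lambda)=r_m^{\lambda}$ shows their number is exactly $\dim(W_{w_n}\cap E_{\lambda_n})-1$; hence there is a real unit vector in $W_{w_n}\cap E_{\lambda_n}$ orthogonal to all of them, unique up to sign, and we take it to be $\ve T_n$. Vectors attached to different eigenvalues are automatically orthogonal because $\fourier$ is unitary and the $\ve T_n$ are real; the total count is $N$ by the multiplicity bookkeeping; and $\len(\ve T_n)\le w_n$ by construction. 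Thus $\{\ve T_n\}_{0\le n<N}$ is an orthonormal eigenbasis with the three required properties, and any other such family is forced, step by step and by the very same dimension count, to coincide with it up to signs, which is the asserted uniqueness.
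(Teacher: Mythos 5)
Your plan takes a genuinely different route from the paper. The paper constructs explicit ``Gaussian-type'' eigenvectors $\ve u_n$, $\ve v_n$, proves the Fourier identities $\fourier \ve u_n = \ve u_{\Nt-n}$ and $\fourier \ve v_n = -\ii \ve v_{\Npt-n}$ (its Theorem~2.4, the technical heart of the argument), forms eigenvectors $\ve u_n \pm \ve u_{\Nt - n}$, $\ve v_n \pm \ve v_{\Npt - n}$ of prescribed width and eigenvalue, and feeds them into a dimension count that \emph{simultaneously reproves} Schur's multiplicity theorem. You instead replace the explicit construction by the abstract Vandermonde computation $\dim V_m = \max(0, 4m+2-N)$ for the $\fourier$-invariant space $V_m$, which is appealingly clean. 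But the proof as written has a gap: you declare that deciding ``which eigenvalue does the shortest eigenvector carry'' requires a separate Gauss-sum-type evaluation, and you do not carry it out. Since that question is precisely what the paper's Theorem~2.4 answers, it cannot be waved away.

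However, the ingredients you already have close this gap without any Gauss sums --- you have simply not run the argument backward. You know the four sequences $d_\lambda(m) := \dim(W_m \cap E_\lambda)$ increase by at most one per step, that $\sum_\lambda d_\lambda(m) = \max(0,4m+2-N)$ for $m < \Nt$, and you have already invoked the Schur / McClellan--Parks multiplicities when defining $\lambda_{N-1}$; but you then used them only in aggregate, via $\sum_\lambda \dim E_\lambda = N$. Use them pointwise instead: $d_\lambda(\Nt) = \dim E_\lambda$ because $W_{\Nt} = \r^N$. For $N$ odd the total jumps by exactly $4$ at each step from $m_0 := w_0$ to $\Nt$, forcing every $d_\lambda$ to jump by exactly $1$; working backward from $d_\lambda(\Nt) = \dim E_\lambda$ gives $d_\lambda(m_0) = \dim E_\lambda - (\Nt - m_0)$, which one checks equals $r_{m_0}^\lambda$. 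For $N$ even the same holds up to $m = \Nt - 1$; at the last step an odd $N$-periodic vector necessarily vanishes at $k = N/2$, so only the two even eigenspaces can gain, and since the total jump there is $2$, each gains exactly one --- again the starting values are forced and match the $r$-counts. With the gap closed this way your route is correct and, on the existence side, slightly more economical than the paper's, though it takes Schur's theorem as input where the paper re-derives it.
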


Note that Theorem \ref{thm_main1} is a counterpart of Theorem \ref{thm_Hermite}, as both identify unique orthonormal bases consisting of the eigenvectors of the Fourier transform that are optimal in the sense of corresponding uncertainty principles. Therefore, the following name is appropriate for the eigenbasis constructed in Theorem \ref{thm_main1}.  

\begin{definition}
The basis $\{\ve T_n\}_{0\le n<N}$, which is identified in Theorem  \ref{thm_main1}, will be called {\it the minimal 
Hermite-type basis} of $\r^N$. 
\end{definition}

The vectors $\{\ve T_n\}_{0\le n<N}$ were introduced in \cite{Kuz_2015}, building upon earlier work of Kong \cite{Kong_2008}. However, the optimality of this basis was not established in \cite{Kuz_2015}, and the construction of the basis was done differently depending on the residue of $N$ modulo 4. In the present paper we give a simpler construction of the basis $\{\ve T_n\}_{0\le n<N}$, which also has the advantage that it does not distinguish between residue classes of $N$ modulo 4. Moreover, from this construction we are able to deduce that 
$\len(\ve T_n)=\lfloor (N+n+2)/4 \rfloor$ for $0\le n <N$.

Our construction provides new information about eigenspaces of the DFT. Let $E_m$ be the eigenspace of the DFT with the eigenvalue $(-\i)^m$ and let $S_n$ be the linear subspace of vectors having width not greater than $n$. More formally, 
\begin{align}\label{def_Em_Sn}
E_m:=\{\ve a \in \r^N \; : \; \fourier \ve a = (-\i)^m \ve a\}, \qquad 
S_n:=\{\ve a \in \r^N \; : \; \len(\ve a)\le n\},
\end{align}
where  $m \in \{0,1,2,3\}$ and $0\le n \le \Nt$. It is clear that for $0\le n < \Nt$ we have $S_n \subset S_{n+1}$ and $\dime(S_n)=2n+1$, while $S_{\Nt}=\r^N$.  
The dimensions of the eigenspaces $E_m$ are also known: 
\begin{align}\label{Schur}
&\dime(E_m)=\Nt-K_m+1 \;\;\; {\textnormal{ for }}  m \in \{0,2\},\\
\nonumber
&\dime(E_m)=\Npt-K_m \;\;\;\;\;\;\;\;\; {\textnormal{ for }}  m \in \{1,3\},
\end{align}
 where $K_m:=\tfloor{(N + 2 +m)/4}$. 
The result \eqref{Schur} is usually presented in the form of a table, by considering different residue classes of $N$ modulo 4, see Table \ref{tab_multiplicities}.  The dimensions of the eigenspaces of the DFT were first found by Schur in 1921, however they can also be easily obtained from a much earlier result of Gauss on the law of quadratic reciprocity. By using this result and the fact that the vectors 
$\{\ve T_n\}_{0\le n <N}$ satisfy conditions (i) and (ii) of Theorem \ref{thm_main1}, we can establish the eigenvalue corresponding to the eigenvector $T_{N-1}$: $\fourier \ve T_{N-1}=(-\i)^{N-1} \ve T_{N-1}$ (respectively, $\fourier \ve T_{N-1}=(-\i)^N \ve T_{N-1}$) if $N$ is odd (respectively, if $N$ is even). For this reason we will find it convenient to introduce a ``ghost" vector
$\ve T_N=\ve T_{N-1}$, so that we can restore the symmetry and have $\fourier \ve T_{N}=(-\i)^N \ve T_{N}$
when $N$ is even.
\label{T_N-1_discussion}

The following result, which follows from our construction of the minimal Hermite-type basis, provides more detailed information about the eigenspaces of the DFT.

\begin{proposition}\label{prop_dimensions_eigenspaces}
Let $\{\ve T_n\}_{0\le n<N}$ be the minimal Hermite-type basis of $\r^N$ and denote $\ve T_N:=\ve T_{N-1}$
and $K_m:=\tfloor{(N + 2 +m)/4}$.
\begin{itemize}
\item[(i)] If $0\le n < K_0$ then $\dime(E_0 \cap S_n )=0$. If $K_0 \le n \le \Nt$ then 
$\dime(E_0 \cap S_n )=n-K_0+1$ and the vectors $\{\ve T_{4l}\}_{0\le l \le n-K_0}$ form an orthonormal basis of $E_0 \cap S_n$. 
\item[(ii)] If $0\le n < K_1$ then $\dime(E_1 \cap S_n )=0$. If $K_1 \le n < \Npt$ then 
$\dime(E_1 \cap S_n )=n-K_1+1$ and the vectors $\{\ve T_{4l+1}\}_{0\le l \le n-K_1}$ form an orthonormal basis of $E_1 \cap S_n$. 
\item[(iii)] If $0\le n < K_2$ then $\dime(E_2 \cap S_n )=0$. If $K_2 \le n \le \Nt$ then 
$\dime(E_2 \cap S_n )=n-K_2+1$ and the vectors $\{\ve T_{4l+2}\}_{0\le l \le n-K_2}$ form an orthonormal basis of $E_2 \cap S_n$. 
\item[(iv)] If $0\le n < K_3$ then $\dime(E_3 \cap S_n )=0$. If $K_3 \le n < \Npt$ then 
$\dime(E_3 \cap S_n )=n-K_3+1$ and the vectors $\{\ve T_{4l+3}\}_{0\le l \le n-K_3}$ form an orthonormal basis of $E_3 \cap S_n$. 
\end{itemize}
\end{proposition}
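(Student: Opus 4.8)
The plan is to argue in one eigenspace at a time: the four items are proved by the same argument, so fix $m\in\{0,1,2,3\}$ and write $K_m=\tfloor{(N+2+m)/4}$. The two ingredients are the orthonormal eigenbasis property (i)--(ii) of Theorem~\ref{thm_main1} and the \emph{exact} width identity $\len(\ve T_n)=\tfloor{(N+n+2)/4}$ coming from the construction, which gives $\len(\ve T_{4l+m})=l+K_m$; the argument itself is a ``leading-entry'' non-cancellation computation.

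First I would identify which basis vectors lie in $E_m$. Since $\fourier$ is unitary and the $\ve T_j$ are real, $\tscalar{\ve a,\ve T_j}=0$ for every $\ve a\in E_m$ whenever the eigenvalue of $\ve T_j$ is not $(-\i)^m$, so each $\ve a\in E_m$ is a combination of the $\ve T_j$ with eigenvalue $(-\i)^m$. Using condition (i) together with the eigenvalue of $\ve T_{N-1}$ found earlier --- and the convention $\ve T_N:=\ve T_{N-1}$, which is exactly what lets one handle the exceptional case ($N$ even) without case distinctions --- one checks that these are precisely $\ve T_m,\ve T_{m+4},\ve T_{m+8},\dots$, i.e.\ $\{\ve T_{4l+m}\}_{0\le l\le L_m}$ for a suitable $L_m$. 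Being part of the orthonormal basis $\{\ve T_n\}$ and spanning $E_m$, they form an orthonormal basis of $E_m$, and comparison with \eqref{Schur} gives $L_m=\Nt-K_m$ for $m\in\{0,2\}$ and $L_m=\Npt-1-K_m$ for $m\in\{1,3\}$. The crucial point is that along this list the widths $l+K_m$ are \emph{strictly increasing} in $l$.

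Next come the two inclusions. Easy one: if $0\le l\le n-K_m$ then $l\le L_m$ (this is where the bound $n\le\Nt$, or $n<\Npt$ when $m$ is odd, is used), hence $\ve T_{4l+m}$ is a basis vector of $E_m$ with width $l+K_m\le n$, so $\ve T_{4l+m}\in E_m\cap S_n$; these $n-K_m+1$ orthonormal vectors give $\dime(E_m\cap S_n)\ge n-K_m+1$ when $n\ge K_m$. Reverse inclusion: take $\ve a\in E_m\cap S_n$ with $\ve a\neq\ve 0$, write $\ve a=\sum_{l=0}^{L_m}c_l\,\ve T_{4l+m}$, and put $L:=\max\{l:c_l\neq0\}$. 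For every $k$ with $L+K_m\le\tabs{k}\le\Nt$ the summands with $l<L$ vanish at $k$, since $\len(\ve T_{4l+m})=l+K_m<L+K_m\le\tabs{k}$; thus $\ve a(k)=c_L\,\ve T_{4L+m}(k)$ there. As $\len(\ve T_{4L+m})=L+K_m$, at least one of $\ve T_{4L+m}(L+K_m)$, $\ve T_{4L+m}(-(L+K_m))$ is non-zero and all entries with $\tabs{k}>L+K_m$ vanish, so $\len(\ve a)=L+K_m$; since $\ve a\in S_n$ this forces $L+K_m\le n$, i.e.\ $\ve a\in\operatorname{span}\{\ve T_{4l+m}:0\le l\le n-K_m\}$. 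In particular $E_m\cap S_n=\{\ve 0\}$ when $n<K_m$, and for $n\ge K_m$ we obtain $E_m\cap S_n=\operatorname{span}\{\ve T_{4l+m}\}_{0\le l\le n-K_m}$, with the stated orthonormal basis and dimension.

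The one genuinely non-routine ingredient is the \emph{equality} in $\len(\ve T_n)=\tfloor{(N+n+2)/4}$: Theorem~\ref{thm_main1}(iii) gives only ``$\le$'', which would allow a cancellation of the top entries in $\ve a=\sum_l c_l\ve T_{4l+m}$ and would wreck the leading-entry argument. It is precisely the strict monotonicity of the widths inside $E_m$ --- a by-product of the explicit construction --- that makes the scheme work, and this is where I expect the real content to sit; tracking $\ve T_{N-1}$ across the residue classes of $N$ modulo $4$ is a secondary, organizational matter, handled uniformly by the ghost vector $\ve T_N=\ve T_{N-1}$.
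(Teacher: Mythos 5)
Your proof is correct, and it takes a genuinely more direct route than the paper's. The paper disposes of Proposition~\ref{prop_dimensions_eigenspaces} in one line by citing Proposition~\ref{prop_properties_wxyz} (which works with the unnormalized vectors $\ve w_n, \ve x_n, \ve y_n, \ve z_n$) together with tables \eqref{end_table_4L}--\eqref{end_table_4L+3} that translate between the $\ve W_n/\ve X_n/\ve Y_n/\ve Z_n$ labelling and the $\ve T_n$ labelling. In Proposition~\ref{prop_properties_wxyz}, the lower bound $\dime(E_m\cap S_n)\ge n-K_m+1$ comes, as in your argument, from strictly increasing widths giving linear independence; but the matching \emph{upper} bound is obtained globally, by summing dimensions over all four eigenspaces and comparing with $N$ via the identity $(\Nt-K_0+1)+(\Npt-K_1)+(\Nt-K_2+1)+(\Npt-K_3)=N$. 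You instead obtain the upper bound locally by the leading-entry non-cancellation computation on $\ve a=\sum_l c_l\ve T_{4l+m}$, which pins down $\len(\ve a)=L+K_m$ exactly. This is cleaner, avoids the counting identity, and works entirely with the finished $\ve T_n$ rather than the intermediate $\ve w_n$ etc. The trade-off is that your argument presupposes the \emph{exact} width identity $\len(\ve T_n)=\tfloor{(N+n+2)/4}$, which you correctly flag as the nontrivial input; the paper proves this equality in the existence part of Theorem~\ref{thm_main1} (the statement $\len(\ve W_n)=n$ for the Gram--Schmidt output), and that proof in turn already leans on Proposition~\ref{prop_properties_wxyz}. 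So your route is not logically independent of the $\ve w_n$-based machinery, but as a deduction of Proposition~\ref{prop_dimensions_eigenspaces} from what Theorem~\ref{thm_main1} supplies it is self-contained, shorter, and arguably more illuminating. One small unjustified step worth spelling out: the claim that every $\ve a\in E_m$ expands only over the $\ve T_j$ with eigenvalue $(-\i)^m$ should be argued via the eigenbasis expansion $\ve a=\sum_n\tscalar{\ve a,\ve T_n}\ve T_n$ and matching coefficients after applying $\fourier$, since the pairing $\tscalar{\cdot,\cdot}$ in the paper is bilinear rather than Hermitian (it works out because the $\ve T_n$ are real).
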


 \begin{table}
	\centering
	\begin{tabular}{| l || c | c | c | c |}
	\hline 	
	 	& dim$(E_0)$  & dim$(E_1)$  & dim$(E_2)$ & dim$(E_3)$    \\
	\hline \hline
	$N=4L$ &  $L+1$	  & $L$ & $L$ & $L-1$ \\ \hline
	$N=4L+1$ & 	 $L+1$ 	& $L$ & $L$ & $L$ \\ \hline
	$N=4L+2$ & 	 $L+1$ 	& $L$ & $L+1$ & $L$\\ \hline
	$N=4L+3$ &	 $L+1$	& $L+1$ & $L+1$ & $L$ \\ \hline
	\end{tabular}
	\caption{Dimensions of the eigenspaces of the DFT.}
	\label{tab_multiplicities}
\end{table}

We have argued above that the minimal Hermite-type eigenvectors $\{\ve T_n\}_{0\le n<N}$ are analogues of Hermite functions
$\{\psi_n\}_{n\ge 0}$, since both are characterized in a very similar way via uncertainty principles, as presented in Theorem 
\ref{thm_Hermite} and Theorem \ref{thm_main1}. There is another similarity between the vectors $\{\ve T_n\}_{0\le n<N}$ and Hermite functions: they both satisfy a three term recurrence relation. In order to state this result, let us introduce an operator $\osc$
acting on vectors in $\r^N$ in the following way:
\begin{equation}\label{def_osc}
 \osc \ve a(k)  = \ve a(k + 1) + \ve a(k - 1) + 2 \cos(\omega k) \ve a(k).
\end{equation}
Note that here we interpret vectors $\ve a$ and $\osc \ve a$ as $N$-periodic functions on $\mathbb Z$, see the discussion on 
page~\pageref{page_N_periodic}. It is clear that $\osc$ is a self-adjoint operator and it is also known that it commutes with the DFT (see~\cite{Dickinson}).

\begin{theorem}\label{thm_3_term_recurrence}
Let $\{\ve T_n\}_{0\le n<N}$ be the minimal Hermite-type basis of $\r^N$ and denote $\ve T_N:=\ve T_{N-1}$.
Then for $0\le n <N-5$ we have
\begin{equation}\label{three_term_recurrence}
\ve T_{n+4}=\big(\osc \ve T_n-a_n \ve T_n-b_{n-4} \ve T_{n-4}\big)/b_{n},
\end{equation}
where the coefficients $a_n$ and $b_n$ satisfy
\begin{equation}\label{formula_an_bn}
a_n=\tscalar{\osc \ve T_n, \ve T_n} \; {\textnormal{ and }} \;b_{n}^2=\tnorm{\osc \ve T_n-a_n \ve T_n-b_{n-4} \ve T_{n-4}}^2=\tnorm{\osc \ve T_n}^2-a_n^2-b_{n-4}^2. 
\end{equation}
In the above formulas we interpret $b_n=0$ and $\ve T_{n}={\mathbf 0}$ for $n<0$. 
When $N$ is odd (respectively, even) formula \eqref{three_term_recurrence} holds also for $n=N-5$ 
(respectively, $n=N-4$). 
\end{theorem}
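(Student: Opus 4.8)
The plan is to exploit three properties of the operator $\osc$ from~\eqref{def_osc}: it is self-adjoint, it commutes with $\fourier$, and it raises width by at most one, that is $\len(\osc\ve a)\le\len(\ve a)+1$. The last property is immediate from~\eqref{def_osc}, since the only positions at which $\osc\ve a$ can have a nonzero entry while $\ve a$ does not are $\pm(\len(\ve a)+1)$ (and a wrap-around around $\pm\Nt$, should it occur, does not affect this inequality); the first two properties are recalled in the text preceding the theorem. Combined with the exact width formula $\len(\ve T_n)=\lfloor(N+n+2)/4\rfloor$ announced after Theorem~\ref{thm_main1}, and with Proposition~\ref{prop_dimensions_eigenspaces}, these three properties already force $\osc\ve T_n$ to be a linear combination of only three of the basis vectors.

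Fix $n$ in the range of the theorem, set $m:=n\bmod 4$ and $K:=\len(\ve T_n)$, so that $\len(\ve T_{n\pm4})=K\pm1$. Since $\ve T_n\in E_m$ and $\osc$ commutes with $\fourier$, we have $\osc\ve T_n\in E_m$; since also $\len(\osc\ve T_n)\le K+1$, in fact $\osc\ve T_n\in E_m\cap S_{K+1}$. By Proposition~\ref{prop_dimensions_eigenspaces} this intersection is spanned by those $\ve T_j$ with $j\equiv m\pmod 4$ and $\len(\ve T_j)\le K+1$, namely $\ve T_m,\ve T_{m+4},\dots,\ve T_n,\ve T_{n+4}$. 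Writing $\osc\ve T_n=\sum_j c_j\ve T_j$ with real $c_j$ over that progression, self-adjointness gives $c_j=\tscalar{\ve T_n,\osc\ve T_j}$, and for $j\le n-8$ we have $\len(\osc\ve T_j)\le\len(\ve T_j)+1\le K-1<K$, so $\osc\ve T_j\in E_m\cap S_{K-1}$, which is orthogonal to $\ve T_n$ by Proposition~\ref{prop_dimensions_eigenspaces}; hence $c_j=0$ for $j\le n-8$. Setting $a_n:=c_n=\tscalar{\osc\ve T_n,\ve T_n}$, $b_{n-4}:=c_{n-4}$ and $b_n:=c_{n+4}$ we obtain
\begin{equation*}
\osc\ve T_n=b_n\ve T_{n+4}+a_n\ve T_n+b_{n-4}\ve T_{n-4}.
\end{equation*}
The chain $b_{n-4}=\tscalar{\osc\ve T_n,\ve T_{n-4}}=\tscalar{\ve T_n,\osc\ve T_{n-4}}$ shows that $b_{n-4}$ is precisely the coefficient of $\ve T_n$ in the analogous expansion of $\osc\ve T_{n-4}$, so the indexing is consistent under the shift $n\mapsto n-4$ (and with the conventions $b_n=0$, $\ve T_n=\mathbf 0$ for $n<0$).

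Granting for the moment that $b_n\neq0$, the recurrence~\eqref{three_term_recurrence} follows by solving the displayed identity for $\ve T_{n+4}$, and~\eqref{formula_an_bn} follows by taking norms: $\osc\ve T_n-a_n\ve T_n-b_{n-4}\ve T_{n-4}=b_n\ve T_{n+4}$ together with $\tnorm{\ve T_{n+4}}=1$ gives the first identity, while $\tnorm{\osc\ve T_n}^2=b_n^2+a_n^2+b_{n-4}^2$ follows from the orthonormality of $\ve T_{n-4},\ve T_n,\ve T_{n+4}$. So the crux is to show $b_n\neq0$. Since $a_n\ve T_n+b_{n-4}\ve T_{n-4}$ has width at most $K$ while $\ve T_{n+4}$ has width exactly $K+1$, it is enough to prove $\len(\osc\ve T_n)=K+1$, i.e.\ that $\osc\ve T_n$ has a nonzero entry at position $\pm(K+1)$. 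Interpreting $\ve T_n$ as an $N$-periodic function, using $\fourier^2\ve T_n=(-1)^n\ve T_n$ (hence $\ve T_n(-k)=(-1)^n\ve T_n(k)$ and $\ve T_n(K)\neq0$, as $\len(\ve T_n)=K$), and evaluating~\eqref{def_osc} at $k=K+1$, one finds $(\osc\ve T_n)(K+1)=\ve T_n(K)\neq0$ in all cases except the single boundary configuration $K=\Nt-1$ with $N$ even, where the positions $K+1$ and $-(K+1)$ coincide modulo $N$ and the evaluation yields $(1+(-1)^n)\ve T_n(K)$; this vanishes only for $n$ odd, but $K=\Nt-1$ forces $n\in\{N-6,N-5,N-4,N-3\}$, and the odd members $N-5,N-3$ lie outside the range of $n$ covered by the theorem. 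Hence $\len(\osc\ve T_n)=K+1$ and $b_n\neq0$.

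Finally, the parity-dependent endpoint --- $n=N-5$ when $N$ is odd and $n=N-4$ when $N$ is even --- is treated identically once one records (see page~\pageref{T_N-1_discussion}) that the ghost vector $\ve T_N=\ve T_{N-1}$ is a $\fourier$-eigenvector with eigenvalue $(-\i)^N$: then $\ve T_{n+4}=\ve T_N$ lies in $E_m$ and has width $K+1$, and both the three-term expansion and the argument for $b_n\neq0$ carry over verbatim. The main obstacle is exactly this last step: proving that $b_n$ never vanishes requires controlling the extreme entries of $\ve T_n$, and it is this that dictates the precise index range and the parity-dependent endpoint in the statement.
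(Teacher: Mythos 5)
Your proof is correct and follows the same structural route as the paper's: use that $\osc$ commutes with $\fourier$, is self-adjoint, and raises width by at most one; expand $\osc\ve T_n$ in the relevant eigenspace basis; kill the far-off-diagonal coefficients by self-adjointness; identify $c_{n+4}=c_n$ under the shift $n\mapsto n-4$; and read off $a_n$, $b_n^2$ from the orthonormality of the $\ve T$'s.

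The genuine value you add is the explicit verification that $b_n\neq0$. The paper derives the forward expansion $\osc\ve T_n=b_n\ve T_{n+4}+a_n\ve T_n+b_{n-4}\ve T_{n-4}$ and the formula for $b_n^2$ but never justifies the division by $b_n$ required in \eqref{three_term_recurrence}; this is a real (if small) gap, since $b_n^2=\tnorm{\osc\ve T_n}^2-a_n^2-b_{n-4}^2$ is a priori only $\ge0$. Your argument closes it cleanly: evaluating \eqref{def_osc} at $k=\len(\ve T_n)+1$ and using $\ve T_n(-k)=(-1)^n\ve T_n(k)$ (from $\fourier^2\ve T_n=(-1)^n\ve T_n$) shows $\len(\osc\ve T_n)=\len(\ve T_n)+1$ except in the single boundary configuration $\len(\ve T_n)=\Nt-1$, $N$ even, $n$ odd, and you correctly check that the offending indices $n\in\{N-5,N-3\}$ lie outside the stated range. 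Since $a_n\ve T_n+b_{n-4}\ve T_{n-4}$ has width $\le\len(\ve T_n)$, this forces $b_n\neq0$. You also handle the parity-dependent endpoint explicitly, which the paper leaves to the reader. In short: same method as the paper, but with a necessary nondegeneracy step made explicit rather than tacit.
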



In Section \ref{section2} we will give explicit formulas for the vectors $\ve T_n$ for $n=0,1,2,3$. These explicit formulas, combined with the three-term recurrence  
\eqref{three_term_recurrence}, lead to a simple and efficient algorithm for computing the remaining vectors $\{\ve T_n\}_{4\le n <N}$.
We will discuss this numerical algorithm in Section \ref{section_numerics}. 

Given the many similarities between the vectors $\ve T_n$ and Hermite functions, 
it is natural to ask  whether $\ve T_n$ converge to the corresponding Hermite  functions $\psi_n$. This result was established in \cite{Kuz_2015} for $n\le 7$ when $N\equiv 1$ (mod 4), and it was conjectured that this convergence holds true for all $n$ (and all residue classes of $N$ modulo 4). The following theorem confirms this conjecture and provides precise information about the rate of convergence.

\begin{theorem}\label{thm_main2}
Let $N\ge 2$, $\omega=2\pi /N$ and $\{\ve T_n\}_{0\le n<N}$ be the minimal Hermite-type basis of $\r^N$. Define  the sequence of vectors $\{\bfpsi_n\}_{n\ge 0}$  as 
\begin{equation}
\bfpsi_n(k)=\sqrt[4]{\omega} \times \psi_n(\sqrt{\omega} k), \;\;\;  k \in I_N. 
\end{equation}
Then it is possible to choose the signs of vectors $\ve T_n$ in such a way that for every $n\ge 0$ and any $\epsilon>0$ we have  $\| \ve T_n - \bfpsi_n\| =O(N^{-1+\epsilon})$ as $N\to +\infty$. 
\end{theorem}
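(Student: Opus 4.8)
\textbf{Proof proposal for Theorem \ref{thm_main2}.}

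The plan is to leverage the three-term recurrence of Theorem \ref{thm_3_term_recurrence} together with the analogous three-term recurrence satisfied by the (rescaled) Hermite functions, and to show that the two recurrences are close enough that their solutions stay close. First I would record the base cases: using the explicit formulas for $\ve T_0, \ve T_1, \ve T_2, \ve T_3$ (from Section \ref{section2}) and the Euler--Maclaurin or Poisson-summation estimates for Riemann sums of the smooth, rapidly decaying functions $\psi_n$, I would verify directly that $\|\ve T_n - \bfpsi_n\| = O(N^{-1+\epsilon})$ for $n = 0,1,2,3$, after fixing the signs appropriately. The factor $\sqrt[4]{\omega}$ in the definition of $\bfpsi_n$ is exactly the normalization that makes $\|\bfpsi_n\| \to \|\psi_n\|_{L_2(\r)} = 1$ with an error of the stated order, so each $\bfpsi_n$ is a near-unit vector, and more is true: one checks that $\tscalar{\bfpsi_n, \bfpsi_m} = \delta_{n,m} + O(N^{-\infty})$ and $\fourier \bfpsi_n = (-\i)^n \bfpsi_n + O(N^{-\infty})$, i.e.\ the rescaled Hermite functions are approximate eigenvectors forming an approximate orthonormal system, with errors smaller than any power of $N$ (this uses that $\psi_n$ and $\mathcal F \psi_n = (-\i)^n\psi_n$ are Schwartz, so the DFT of the sampled vector differs from the sampled continuous transform by aliasing terms that are $O(N^{-\infty})$).

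Next I would set up the induction on $n$ through the recurrence. Write $\osc$ for the discrete operator in \eqref{def_osc} and let $\mathcal L_c = \d^2/\d x^2 - x^2$ be the Hermite operator, with $\mathcal L_c \psi_n = -(2n+1)\psi_n$. A Taylor expansion of $\cos(\omega k)$ and of the second difference shows that, acting on vectors of the form $\bfpsi_m$ (whose relevant indices $k$ satisfy $|k| = O(\sqrt N \log N)$ by rapid decay), the operator $\osc$ satisfies
\begin{equation}
\osc \bfpsi_m = \big(2 + \omega\, \mathcal L_c^{(\text{sampled})}\big)\bfpsi_m + O(\omega^2 \cdot N^{-\infty})\text{-type corrections},
\end{equation}
so that $(\osc - 2)/\omega$ is a discretization of $\mathcal L_c$ with error $O(\omega) = O(N^{-1})$ in operator norm when restricted to the span of the first few sampled Hermite functions. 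Consequently the coefficients $a_n = \tscalar{\osc \ve T_n, \ve T_n}$ and $b_n$ defined in \eqref{formula_an_bn} converge: $a_n = 2 + \omega\,(-(2n+1)) + O(N^{-2+\epsilon})$ and $b_n = \omega\, \beta_n + O(N^{-2+\epsilon})$, where $\beta_n$ is the corresponding off-diagonal coefficient in the Hermite operator's action in the basis $\{\psi_{n}, \psi_{n\pm4},\dots\}$ (which is a genuine three-term recurrence because $\mathcal L_c$ shifts the index by $0,\pm 2$, and squaring the DFT eigenvalue relation restricts to steps of $4$). One must check $b_n \neq 0$ for $n$ in the relevant range; this follows from the explicit nonvanishing of $\beta_n$ together with the error bound, for $N$ large. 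Then, assuming inductively $\|\ve T_j - \bfpsi_j\| = O(N^{-1+\epsilon})$ for $j \le n$, plug into \eqref{three_term_recurrence}: the numerator $\osc \ve T_n - a_n \ve T_n - b_{n-4}\ve T_{n-4}$ differs from $\osc \bfpsi_n - a_n \bfpsi_n - b_{n-4}\bfpsi_{n-4}$ by $O(N^{-1+\epsilon})$ (since $\|\osc\|$ on the relevant subspace is $O(1)$ and $a_n, b_{n-4}$ are bounded), and the latter in turn equals $b_n \bfpsi_{n+4} + O(N^{-1+\epsilon})$ by the approximate Hermite recurrence; dividing by $b_n$, whose reciprocal is $O(\omega^{-1}) = O(N)$, is the dangerous step, so the error estimates must be arranged with one extra power of $N$ to spare.

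The main obstacle is precisely that division by $b_n \sim \omega\beta_n = O(N^{-1})$: a naive propagation of an $O(N^{-1+\epsilon})$ error through the recurrence would be amplified by $O(N)$ at each step and destroy the bound. The resolution is to carry the argument not at the level of crude norm estimates but by tracking the leading-order terms exactly: one shows that $\osc \bfpsi_n - a_n \bfpsi_n - b_{n-4}\bfpsi_{n-4} = b_n \bfpsi_{n+4} + \ve r_n$ with $\|\ve r_n\| = O(N^{-2+\epsilon})$ — i.e.\ the discretization errors, after the $1/\omega$ rescaling built into $a_n$ and $b_n$, are genuinely $O(\omega^2)$ because the next term in the Taylor expansion of $\cos$ and of the symmetric second difference is $O(\omega^2)$ and hits Schwartz functions — and then division by $b_n = O(N^{-1})$ yields the desired $O(N^{-1+\epsilon})$, not worse. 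The contribution from the inductive error $\|\ve T_j-\bfpsi_j\|$, $j\le n$, is multiplied by the bounded quantities $\|\osc\|_{\text{rel}}/|b_n| \cdot |b_n| = O(1)$ and $|a_n|/|b_n|$, $|b_{n-4}|/|b_n|$, all of which are $O(1)$ since $a_n \to 2$ while $b_n, b_{n-4} \to 0$ at the same rate $\omega$ — wait, $|a_n|/|b_n| = O(N)$, so this term \emph{is} amplified; the fix is to observe that in the numerator the dangerous $a_n$-piece is $a_n(\ve T_n - \bfpsi_n)$ only after subtracting $a_n\bfpsi_n$, and since $(\osc-2)/\omega$ and the rescaled $a_n$ both converge, one should rewrite the recurrence in terms of the renormalized operator $(\osc - 2\,\mathrm{Id})$ and renormalized coefficients $\tilde a_n = (a_n-2)/\omega = O(1)$, $\tilde b_n = b_n/\omega = O(1)$, giving $\bfpsi_{n+4} = (\tilde{\osc}\bfpsi_n - \tilde a_n \bfpsi_n - \tilde b_{n-4}\bfpsi_{n-4})/\tilde b_n$ with all coefficients $O(1)$ and $\tilde b_n$ bounded away from $0$; the error then propagates with an $O(1)$ multiplicative constant per step, so finitely many steps preserve $O(N^{-1+\epsilon})$. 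Handling the edge cases near $n = N-5$ or $n = N-4$ (where the recurrence terminates, and $\ve T_{N-1}$'s eigenvalue depends on the parity of $N$) is a routine separate check using the dimension count \eqref{Schur} and Proposition \ref{prop_dimensions_eigenspaces}.
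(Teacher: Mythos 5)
Your strategy---compare the recurrence of Theorem~\ref{thm_3_term_recurrence} with a continuum three-term recurrence for Hermite functions and propagate errors inductively---founders on a misestimate of the rate at which the off-diagonal coefficients $b_n$ vanish. You claim $b_n = \omega\,\beta_n + O(N^{-2+\epsilon})$ with $\beta_n$ nonzero and bounded, justified by ``a genuine three-term recurrence because $\mathcal L_c$ shifts the index by $0,\pm 2$.'' But the Hermite operator $\mathcal L_c = \d^2/\d x^2 - x^2$ is \emph{exactly diagonal} in the Hermite basis, $\mathcal L_c\psi_n = -(2n+1)\psi_n$, and does not shift the index at all: it is $x$, $\d/\d x$, $x^2$, $\d^2/\d x^2$ separately that shift indices, but precisely the combination defining $\mathcal L_c$ cancels the off-diagonal parts. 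Hence the leading approximation $\osc\approx 4\,\mathrm{Id}+\omega\mathcal L_c$ (note $4$, not $2$: the symmetric second difference contributes $2\ve a(k)$ at zeroth order, which is why the paper notes $a_n=4+o(1)$) produces \emph{no} $\bfpsi_{n+4}$-component, and one must go to next order. A direct computation gives $\osc\ve G(k)=\ve G(k)\bigl[4-\omega+\omega^2(x^4/6-x^2/2+1/4)\bigr]+O(\omega^3)$ with $x=\sqrt\omega k$, and expanding $x^4\psi_0$ in the Hermite basis yields a $\psi_4$-coefficient $\sqrt{3/2}$, so $b_0\approx\frac{\omega^2}{6}\sqrt{3/2}=O(N^{-2})$. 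In general $b_n=O(N^{-2})$, not $O(N^{-1})$; this is exactly the source of the severe ill-conditioning of the recurrence noted in Section~\ref{section_numerics}.

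With $b_n=O(N^{-2})$, your renormalized $\tilde b_n=b_n/\omega$ tends to zero rather than staying bounded away from it, and dividing by $b_n$ amplifies the numerator error by $O(N^2)$ per step. Even granting your claimed $O(N^{-2+\epsilon})$ bound on the discretization error, $1/b_n=O(N^2)$ turns it into $O(N^\epsilon)$, which does not tend to zero at all; and the term $a_n(\ve T_n-\bfpsi_n)/b_n$ carries an $O(N^2)\cdot O(N^{-1+\epsilon})=O(N^{1+\epsilon})$ amplification, even worse. To make the recurrence work you would need to match the discrete and continuum sides to accuracy $O(\omega^3)$ \emph{and} start with base cases of the same accuracy, none of which you establish. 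The paper avoids the recurrence entirely in this proof: it writes $\ve T_m(k)=P_m(t_k)\ve U_{\nu(m)}(k)$ for a polynomial $P_m$ of degree at most $m+2$, expands $P_m$ in normalized Hermite polynomials, shows via Lemma~\ref{lem:u:g} that the vectors $\ve A_j(k)=h_j(t_k)\ve U_{\nu(m)}(k)$ converge to $\bfpsi_j$, and then pins down the coefficients $\gamma_j$ by two soft arguments: orthogonality of $\ve T_m$ to the lower $\ve T_n$'s kills $\gamma_0,\dots,\gamma_{m-2}$, and the DFT eigenvector property $\fourier\ve T_m=(-\i)^m\ve T_m$ kills $\gamma_{m+2}$. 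No division by a small quantity ever occurs.
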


The rest of the paper is organized as follows. In Section \ref{section2} we give an explicit construction of the minimal Hermite-type basis and we prove Theorem \ref{thm_main1}, Proposition \ref{prop_dimensions_eigenspaces} and Theorem 
\ref{thm_3_term_recurrence}. In Section \ref{section_convergence} we prove Theorem \ref{thm_main2}. Finally, in Section
\ref{section_numerics} we discuss an algorithm for numerical computation of the minimal Hermite-type basis. 

\section{Constructing the minimal Hermite-type basis}\label{section2}

Our goal in this section is to construct the basis $\{\ve T_n\}_{0\le n <N}$ and to investigate its properties, and ultimately to prove Theorem \ref{thm_main1}. While the construction is essentially the same as in \cite{Kuz_2015}, the present version is simpler, both in notation and proofs. Another advantage of the present construction is that it does not distinguish between different residue classes of $N$ modulo 4.

We recall that $\omega=2\pi /N$. We define $S(0)=1$ and
\begin{equation}\label{def_Sk}
 S(k)  = \prod_{j = 1}^k (2 \sin(\omega j / 2)), \;\;\; k\ge 1.  
\end{equation}
In the next Lemma we list some properties of the sequence $\{S(k)\}_{k\ge 0}$. 

\begin{lemma} \label{lemma_S_properties}
${}$
\begin{itemize}
\item[(i)]  $S(k) S(N - 1 - k) = S(N - 1)=N$ when $0 \le k < N$. 
\item[(ii)] If $N$ is odd, then
\begin{equation}\label{eq:ss1}
 S(\Nt - k) S(\Nt + k)=S(\Nmt - k) S(\Nmt + k)  = N, \;\;\; |k|\le \Nt. 
\end{equation}
\item[(iii)] 
If $N$ is even, then 
\formula[eq:ss2]{
 S(\Nt - k) S(\Nt + k) & = 2 N \cos(\omega k / 2) ,  \;\;\; |k|\le \Nt,\\
 S(\Nmt - k) S(\Nmt + k) & = \frac{N}{2 \cos(\omega k / 2)} \, \;\;\;\;\;\,\; |k|\le \Npt.
 }
\end{itemize}
\end{lemma}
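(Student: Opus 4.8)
The plan is to reduce everything to a single product identity for the sine function and then chase the definitions. The key fact is the classical formula
\formula{
 \prod_{j=1}^{N-1} \bigl(2 \sin(\omega j / 2)\bigr) = N,
}
which can be obtained, for instance, by factoring $z^N - 1 = \prod_{j=0}^{N-1}(z - e^{\i \omega j})$, dividing by $z-1$, letting $z \to 1$, and taking absolute values (using $|1 - e^{\i\theta}| = 2|\sin(\theta/2)|$ together with $\sin(\omega j/2) > 0$ for $1 \le j \le N-1$). This gives $S(N-1) = N$ immediately. For part (i), I would simply pair up the factors: $S(k) S(N-1-k)$ is a product over $j \in \{1,\dots,k\}$ and $j \in \{1,\dots,N-1-k\}$, and the substitution $j \mapsto N - j$ in the second product (using $\sin(\omega(N-j)/2) = \sin(\pi - \omega j/2) = \sin(\omega j/2)$) shows that the union of the two index sets, counted correctly, is exactly $\{1,\dots,N-1\}$, so the product is $S(N-1) = N$.

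For parts (ii) and (iii), the idea is the same but now the ``missing'' or ``doubled'' factors have to be tracked when the split point $\Nt$ or $\Nmt$ is not simply related to $N-1$ by the reflection $j \mapsto N-j$. Concretely, $S(\Nt - k) S(\Nt + k)$ is a product over $\{1, \dots, \Nt - k\} \cup \{1, \dots, \Nt + k\}$; applying $j \mapsto N - j$ to the second factor turns it into a product over $\{N - \Nt - k, \dots, N - 1\}$. I would then compare the multiset $\{1,\dots,\Nt-k\} \cup \{N-\Nt-k, \dots, N-1\}$ against $\{1, \dots, N-1\}$. When $N$ is odd, $\Nt = (N-1)/2$ and $N - \Nt = (N+1)/2 = \Nt + 1$, so the two ranges $\{1,\dots,\Nt-k\}$ and $\{\Nt+1-k, \dots, N-1\}$ tile $\{1,\dots,N-1\}$ exactly (there is no overlap and no gap), giving $N$; the computation for $S(\Nmt-k)S(\Nmt+k)$ is identical with $\Nmt = \Nt - 1$, shifting both ranges by one. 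When $N$ is even, $\Nt = N/2$ and $N - \Nt = N/2 = \Nt$, so the range $\{\Nt - k, \dots, N-1\}$ \emph{overlaps} $\{1,\dots,\Nt - k\}$ in the range $\{\Nt - k, \dots, \Nt-k\}$ — wait, more carefully, the second range becomes $\{N/2 - k, \dots, N-1\}$ and the first is $\{1, \dots, N/2 - k\}$, so the index $j = N/2 - k$ is counted twice and the indices $\{N/2-k+1, \dots, N/2 -1\}$ are not yet accounted; I would collect these leftover factors explicitly. The doubled factor contributes an extra $2\sin(\omega(N/2-k)/2)$ and, after using $\sin(\omega(N/2-k)/2) = \sin(\pi/2 - \omega k/2) = \cos(\omega k/2)$ and the symmetric bookkeeping on the other side, one arrives at the factor $2N\cos(\omega k/2)$ in the first line of \eqref{eq:ss2}; the second line, involving $\Nmt$, produces a \emph{missing} factor instead of a doubled one, which is why $\cos(\omega k/2)$ appears in the denominator. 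The sign issue (that $2\sin(\omega j/2)$ may be negative when $\Nt + k > N - 1$, i.e.\ when the argument of $S$ exceeds $N-1$, or that $S$ is evaluated at a negative argument when $k > \Nt$) needs a small separate remark: one either extends the definition of $S$ consistently or restricts attention to the stated ranges of $k$ where all arguments lie in $\{0, \dots, N-1\}$.

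The main obstacle I anticipate is purely combinatorial rather than analytic: getting the index bookkeeping in the even case exactly right, in particular correctly identifying which single factor is doubled (for the $\Nt$ identity) versus which single factor is absent (for the $\Nmt$ identity), and confirming that in both cases that distinguished factor is precisely the one whose argument is $\omega \cdot (N/2 \pm k)/2$, so that it evaluates to $\cos(\omega k / 2)$ up to sign. Once that single factor is pinned down, the rest is the reflection identity $\sin(\pi - \theta) = \sin\theta$ applied term by term, which is routine. It is also worth checking the edge cases $k = 0$ and the extreme values $|k| = \Nt$ (or $\Npt$) separately to make sure no index range becomes empty or runs past $N-1$ in a way that changes the count.
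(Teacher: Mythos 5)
Your part (i) matches the paper's own argument: both establish $S(N-1)=N$ by factoring $z^N-1$ and evaluating at $z=1$, and both pair $S(k)$ with $S(N-1-k)$ via $\sin(\omega j/2)=\sin(\omega(N-j)/2)$. For (ii) and (iii) the paper simply reduces to (i): for instance, when $N$ is even one writes $S(\Nt+k)=S(\Nt+k-1)\cdot 2\sin(\omega(\Nt+k)/2)$ and applies (i) to $S(\Nt-k)S(\Nt+k-1)$, whose arguments sum to $N-1$. Your direct tiling of index multisets is a valid alternative route that buys nothing extra and is noticeably more error-prone, as your own text illustrates. In the even-$N$ computation of $S(\Nt-k)S(\Nt+k)$, after the substitution $j\mapsto N-j$ the second index set is $\{N/2-k,\dots,N-1\}$; together with $\{1,\dots,N/2-k\}$ this covers $\{1,\dots,N-1\}$ exactly once, with only the single index $j=N/2-k$ appearing twice. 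The indices $\{N/2-k+1,\dots,N/2-1\}$ you describe as ``not yet accounted'' are in fact already contained in the second range, so that sentence is wrong, and the phrase ``symmetric bookkeeping on the other side'' suggests a second correction that does not exist. The correct conclusion of the tiling is simply $S(\Nt-k)S(\Nt+k)=S(N-1)\cdot 2\sin(\omega(N/2-k)/2)=2N\cos(\omega k/2)$; you do state the right final answer, but the intermediate accounting needs to be repaired before this reads as a proof.

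Two smaller points. The ``sign issue'' you flag does not actually arise for $|k|\le\Nt$: the only boundary case is $|k|=\Nt$ with $N$ even, where $S(N)$ appears, and then the extra factor $2\sin(\omega N/2)$ is $0$, not negative, consistent with $\cos(\omega\Nt/2)=0$ on the right-hand side. And the shortcut of peeling off one factor so that the remaining arguments sum to $N-1$ (i.e.\ deducing (ii)--(iii) from (i)) is worth adopting here, since the same identity $S(a)S(N-1-a)=N$ is reused repeatedly in the proof of Theorem~\ref{thm:gauss}.
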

\begin{proof}
The fact $S(k) S(N - 1 - k) = S(N - 1)$ follows from \eqref{def_Sk} by using formula $\sin(\omega j/2)=\\
\sin(\omega (N-j)/2)$. Next, evaluating the identity
\formula{
 \Big | {\sum_{j = 0}^{N - 1} z^j} \Big | & = \prod_{j = 1}^{N - 1} \tabs{z - e^{\ii \omega j}}
}
for $z = 1$, we obtain
\formula{
 N & = \prod_{j = 1}^{N - 1} \tabs{1 - e^{\ii \omega j}} = S(N - 1),
}
which ends the proof of item (i). Items (ii) and (iii) follow easily from (i).  
\end{proof}

If $N$ is even (respectively, odd) we set $\alpha_0:=1/2$ (respectively, $\alpha_0:=1$). 
For $1 \le n \le \Nt$ we define
\begin{equation}\label{def_alpha}
\alpha_n:=\begin{cases}
\displaystyle 
S(n)^{-2} (S(2n))^{1/2}, \; \qquad \qquad \qquad  \;\;\; {\textnormal{ if $N$ is odd}}, \\ 
\displaystyle S(n)^{-2} (S(2n-1) \sin(\omega n/2))^{1/2}, \;\; \;{\textnormal{ if $N$ is even}}. 
\end{cases}
\end{equation}
Similarly, for $0 < n < \Npt$ we define
\begin{equation}\label{def_beta}
\beta_n:=\begin{cases}
\displaystyle  S(n)^{-2} (S(2n-1))^{1/2}, \; \qquad \qquad  \, \;\;\;  {\textnormal{ if $N$ is odd}}, \\ 
\displaystyle  S(n)^{-2} (S(2n-1) \cos(\omega n/2))^{1/2}, \;\;\, {\textnormal{ if $N$ is even}}.
\end{cases}
\end{equation}
We also denote
\begin{equation}\label{def_t_k}
t_k:=\frac{2}{\sqrt{\omega }} \sin( \omega k/2). 
\end{equation}

\begin{definition}\label{definition_un_vn}
When $0 \le n \le \Nt$, we define the Gaussian-type vector $\ve u_n$ by
\begin{equation}\label{def_u_n}
\ve u_n(k)=\alpha_n \prod\limits_{j=n+1}^{\Nt} \big( 1 - (t_k/t_j)^2 \big), \;\;\; k\in I_N. 
\end{equation}
If $n=\Nt$ the empty product is interpreted as one, thus $\ve u_{\Nt}(k)=\alpha_{\Nt}$ for $k \in I_N$. When $0 < n < \Npt$, we define the modified Gaussian-type vector $\ve v_n$ by
\begin{equation}\label{def_v_n}
\ve v_n(k)=\beta_n \sin(\omega k) \prod\limits_{j=n+1}^{\Nmt} \big( 1 - (t_k/t_j)^2 \big),  \;\;\; k\in I_N. 
\end{equation}
If $n=\Nmt$ the empty product is interpreted as one, thus $\ve v_{\Nmt}(k)=\beta_{\Nmt} \sin(\omega k)$ for $k \in I_N$. 
\end{definition}

\begin{remark}
The vectors $\ve u_n$ and $\ve v_n$ were introduced in \cite{Kuz_2015} and \cite{Pei_Chang_2016}, following the earlier work of Kong 
\cite{Kong_2008}. Note, however, that these vectors had different normalization constants in \cite{Kuz_2015,Pei_Chang_2016} and were labelled with different index in \cite{Kuz_2015}. 
\end{remark}

To simplify the statement of results, whenever we write an identity involving $\ve u_n$, $\ve v_n$ or any of a number of objects introduced below, we implicitly assume that $n$ is in the \emph{admissible} range; for example, $0 \le n \le \Nt$ when speaking about $\ve u_n$.

\begin{definition}
We call a vector $\ve a \in \r^N$ even (respectively, odd) if the corresponding $N$-periodic function is even (respectively, odd). 
\end{definition}

Note that the vectors $\ve u_n$ are even and vectors $\ve v_n$ are odd.

\begin{lemma}\label{lemma_u_v_properties}
 Assume that $N\ge 2$ and $k\in I_N$. 
\begin{itemize}
\item[(i)] The following identities are true:
\begin{align}\label{formula_u_n}
\ve u_n(k)&=\frac{\alpha_n S(n)^2}{S(\Nt)^2} 
\prod\limits_{j=n+1}^{\Nt} (2\cos({\omega} k)-2\cos({\omega} j)), \\
\label{formula_v_n}
\ve v_n(k)&=\frac{\beta_n S(n)^2}{S(\Nmt)^2} \sin({\omega} k)
\prod\limits_{j=n+1}^{\Nmt} (2\cos({\omega} k)-2\cos({\omega} j)).
\end{align}
\item[(ii)]  If $N$ is odd then 
\begin{equation}\label{formula_u_n_3_odd}
\ve u_n(k)=\frac{\alpha_n  S(n)^2}{ N^{2}}  S(N-n-1-k)S(N-n-1+k),
\end{equation}
and if $N$ is even and $n<N/2$ then
\begin{equation}\label{formula_u_n_3_even}
\ve u_n(k)=\frac{\alpha_n S(n)^2}{N^2}   \cos({\omega} k/2)  S(N-n-1-k)S(N-n-1+k),
\end{equation}
whereas if $N$ is even and $n=N/2$ we have $\ve u_n(k)=1/(2\sqrt{N})$. 
\item[(iii)] If $N$ is odd then
\begin{equation}\label{formula_v_n_3_odd}
\ve v_n(k)=\frac{\beta_n S(n)^2}{N^2}  \sin({\omega} k) S(N-n-1-k)S(N-n-1+k),
\end{equation}
and if $N$ is even then
\begin{equation}\label{formula_v_n_3_even}
\ve v_n(k)=  \frac{2\beta_n S(n)^2}{N^2} \sin({\omega} k/2) S(N-n-1-k)S(N-n-1+k).
\end{equation}
\end{itemize}
\end{lemma}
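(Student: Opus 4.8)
The plan is to reduce everything to the definition \eqref{def_u_n}--\eqref{def_v_n} and the product identities in Lemma~\ref{lemma_S_properties}, proceeding in the order (i) $\to$ (ii) $\to$ (iii). First I would prove part (i). Starting from \eqref{def_u_n}, I pull out a common factor: writing $t_k/t_j = \sin(\omega k/2)/\sin(\omega j/2)$ and using the identity $2\cos(\omega k) - 2\cos(\omega j) = 4 \sin(\tfrac{\omega(j-k)}{2})\sin(\tfrac{\omega(j+k)}{2})$ together with the half-angle identity $2 - 2\cos\theta = 4\sin^2(\theta/2)$, each factor $1 - (t_k/t_j)^2$ equals $(2\cos(\omega k)-2\cos(\omega j))/(2 - 2\cos(\omega j)) = (2\cos(\omega k)-2\cos(\omega j))/(4\sin^2(\omega j/2))$. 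Hence $\prod_{j=n+1}^{\Nt}(1-(t_k/t_j)^2) = \big(\prod_{j=n+1}^{\Nt}(2\cos(\omega k)-2\cos(\omega j))\big) \big/ \big(4^{\Nt - n}\prod_{j=n+1}^{\Nt}\sin^2(\omega j/2)\big)$. The denominator is exactly $\big(\prod_{j=n+1}^{\Nt} 2\sin(\omega j/2)\big)^2 = (S(\Nt)/S(n))^2$ by \eqref{def_Sk}, which gives \eqref{formula_u_n}; the computation for \eqref{formula_v_n} is identical, the extra factor $\sin(\omega k)$ simply coming along for the ride.

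For part (ii), I would evaluate the product $\prod_{j=n+1}^{\Nt}(2\cos(\omega k) - 2\cos(\omega j))$ appearing in \eqref{formula_u_n}. Again using $2\cos(\omega k)-2\cos(\omega j) = 4\sin(\tfrac{\omega(j-k)}{2})\sin(\tfrac{\omega(j+k)}{2})$ and regrouping the two families of sine factors, this product telescopes into a ratio of values of $S$: concretely $\prod_{j=n+1}^{\Nt}(2\cos(\omega k)-2\cos(\omega j))$ should collapse to $S(\Nt - k)S(\Nt + k)/(S(n-k)S(n+k))$ up to sign, using $2\sin(\omega m/2) = S(m)/S(m-1)$ and the $N$-periodicity/oddness of $\sin$ to handle negative arguments. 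Then I feed in Lemma~\ref{lemma_S_properties}(ii) for odd $N$ (which turns $S(\Nt-k)S(\Nt+k)$ into $N$, and similarly absorbs the $S(n\pm k)$ into $S(N-n-1\mp k)$ via part (i) of that lemma) and Lemma~\ref{lemma_S_properties}(iii) for even $N$ (which produces the extra $2\cos(\omega k/2)$ factor and, for the denominator, the $1/(2\cos(\omega k/2))$; I expect the $2$'s and cosines to combine into the single $\cos(\omega k/2)$ in \eqref{formula_u_n_3_even}). Care is needed with the indices near $\Nt$ and with the two parity cases of $N$ inside the $S$-identities; that bookkeeping is the main obstacle, but it is purely mechanical. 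The degenerate case $N$ even, $n = N/2$ follows directly: the product in \eqref{def_u_n} is empty, so $\ve u_{N/2}(k) = \alpha_{N/2} = 1/(2\sqrt N)$ by \eqref{def_alpha} and Lemma~\ref{lemma_S_properties}(iii).

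Part (iii) is then essentially a corollary of the work done for (ii): the vector $\ve v_n$ differs from the analogous computation only in that the product runs to $\Nmt$ instead of $\Nt$ and carries the prefactor $\beta_n S(n)^2/S(\Nmt)^2 \cdot \sin(\omega k)$. The telescoping gives $S(\Nmt-k)S(\Nmt+k)/(S(n-k)S(n+k))$, and I invoke Lemma~\ref{lemma_S_properties}(ii) (odd $N$, giving $N$) or the second line of Lemma~\ref{lemma_S_properties}(iii) (even $N$, giving $N/(2\cos(\omega k/2))$); combining $\sin(\omega k) = 2\sin(\omega k/2)\cos(\omega k/2)$ with that $1/(2\cos(\omega k/2))$ cancels the cosine and leaves the clean $2\sin(\omega k/2)$ prefactor in \eqref{formula_v_n_3_even}, while the odd case needs no such cancellation and yields \eqref{formula_v_n_3_odd}. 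The only thing to check carefully throughout is that all the sign factors accumulated from $\sin(-\theta) = -\sin\theta$ and from reindexing cancel in pairs — which they must, since both sides are manifestly the same polynomial in $\cos(\omega k)$ evaluated at the finitely many nodes $k \in I_N$, so a degree/leading-coefficient comparison can serve as a sanity check at the end.
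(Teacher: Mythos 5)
Your proposal is correct and follows essentially the same route as the paper: rewrite each factor $1-(t_k/t_j)^2$ as $(2\cos\omega k - 2\cos\omega j)/(4\sin^2(\omega j/2))$ to get part (i), then telescope via $2\cos\omega k - 2\cos\omega j = 4\sin(\tfrac{\omega(j-k)}{2})\sin(\tfrac{\omega(j+k)}{2})$ and invoke Lemma~\ref{lemma_S_properties} for parts (ii)--(iii). The one place where the paper is cleaner than your ``up to sign'' worry: it first restricts to $|k|\le n$ (so every argument $j\pm k$ with $n<j\le\Nt$ lies in $\{1,\dots,N-1\}$ and no signs or negative $S$-arguments arise at all), and then extends to all $k\in I_N$ by observing that both sides vanish for $n<|k|$ --- a more direct justification than your degree/leading-coefficient sanity check.
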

\begin{proof}
Formulas \eqref{formula_u_n} and \eqref{formula_v_n} follow from \eqref{def_u_n} and \eqref{def_v_n} by noting that 
$${\omega} t_k^2=4 \sin^2({\omega} k/2)=2-2\cos({\omega} k)$$ 
and using the identity
$$
\prod\limits_{j=n+1}^{m} ({\omega} t_j^2)=\prod\limits_{j=n+1}^{m} (2 \sin({\omega} j/2))^2=\frac{S(m)^2}{S(n)^2}, 
$$
with $m=\Nt$ or $m=\Nmt$. 

In order to prove formula \eqref{formula_u_n_3_odd} we write for $|k|\le n$
\begin{align*}
\ve u_n(k)&=\frac{\alpha_n S(n)^2}{S(\Nt)^2} 
\prod\limits_{j=n+1}^{\Nt} (2\cos({\omega} k)-2\cos({\omega} j))\\
&=\frac{\alpha_n S(n)^2}{S(\Nt)^2} 
\prod\limits_{j=n+1}^{\Nt} (2 \sin({\omega} (j-k)/2) 2 \sin({\omega} (j+k)/2))\\
&=\frac{\alpha_n S(n)^2 S(\Nt-k)S(\Nt+k)}{S(\Nt)^2S(n-k) S(n+k)}. 
\end{align*}
After simplifying the above expression using Lemma \ref{lemma_S_properties}, we see that formula
\eqref{formula_u_n_3_odd} is valid for all $|k|\le n$. We extend its validity to all $k\in I_N$, since both the left-hand side
and the right-hand side are zero when $n<|k|$.   

The proof of formulas \eqref{formula_u_n_3_even}, \eqref{formula_v_n_3_odd} and \eqref{formula_v_n_3_even} follows exactly the same steps. We leave the details to the reader. 
\end{proof}

The following result is the key to constructing the basis $\{\ve T_n\}_{0\le n <N}$. This result was first established in \cite{Kuz_2015} via q-binomial Theorem; here we give a simpler proof based on Lemma \ref{lemma_u_v_properties}. 

\begin{theorem}\label{thm:gauss}
For all admissible $n$ we have $\fourier \ve u_n  = \ve u_{\Nt - n}$ and  $\fourier \ve v_n = -\ii  \ve v_{\Npt - n}$. 
\end{theorem}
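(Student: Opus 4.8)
The plan is to prove the two identities by a direct computation of the DFT, using the explicit product formulas for $\ve u_n$ and $\ve v_n$ from Lemma \ref{lemma_u_v_properties}(i). The key observation is that, after pulling out the normalization constants, $\ve u_n(k)$ is (up to a constant depending only on $n$) a polynomial of degree $\Nt - n$ in the variable $2\cos(\omega k)$ whose roots are exactly $2\cos(\omega j)$ for $n+1 \le j \le \Nt$; equivalently, the span of $\{\ve u_n, \ve u_{n+1}, \dots, \ve u_{\Nt}\}$ is the same as the span of $\{1, \cos(\omega k), \cos(2\omega k), \dots, \cos((\Nt-n)\omega k)\}$, which is precisely the space $S_{\Nt - n}$ of even vectors of width at most $\Nt - n$. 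Since these even-vector subspaces are nested and have dimension $\Nt - n + 1$, and the DFT maps the space of even vectors to itself, the only thing one needs to check is a single ``leading coefficient'' relation together with one anchoring case, and the whole family of identities follows by downward induction on $n$ (from $n = \Nt$ down to $n = 0$).

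Concretely, I would proceed as follows. First, observe that $\fourier$ preserves parity (the DFT of an even vector is even, of an odd vector is odd), so $\fourier \ve u_n$ lies in the span of $\{\ve u_0, \dots, \ve u_{\Nt}\}$ and $\fourier \ve v_n$ lies in the span of $\{\ve v_1, \dots, \ve v_{\Nmt}\}$. Second, I would compute the image under $\fourier$ of the ``extreme'' vector: for $n = \Nt$ one has $\ve u_{\Nt} = \alpha_{\Nt}\,\mathbf 1$ (constant vector), and $\fourier$ applied to the constant vector gives a multiple of the unit vector $\ve e_0$; matching this against $\ve u_0$ — whose formula \eqref{formula_u_n_3_odd} or \eqref{formula_u_n_3_even} at $k=0$ versus $k \ne 0$ I would evaluate using Lemma \ref{lemma_S_properties}(i) — establishes $\fourier \ve u_{\Nt} = \ve u_0$. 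This is the base case. Third — and this is the technical heart — I would show the ``raising'' compatibility: that if $\fourier \ve u_{n+1} = \ve u_{\Nt - n - 1}$, then applying $\fourier$ to $\ve u_n$ and using the recursion $\ve u_n = \alpha_n/\alpha_{n+1}\cdot(2\cos(\omega k) - 2\cos(\omega(n+1)))\,\ve u_{n+1}$ — i.e.\ multiplication by $2\cos(\omega k)$ becomes, on the Fourier side, the averaging operator $\ve a(k)\mapsto \ve a(k+1)+\ve a(k-1)$ — produces $\ve u_{\Nt-n}$ with the correct constant. Here the crucial input is that $2\cos(\omega k)$ acting by multiplication and $\ve a(k)\mapsto \ve a(k+1)+\ve a(k-1)$ acting by shift are intertwined by $\fourier$ (this is the same fact that makes $\osc$ in \eqref{def_osc} commute with $\fourier$), so the product structure of $\ve u_n$ in the $k$-variable is converted, term by term, into a product structure in the Fourier variable, with roots at $2\cos(\omega j)$, $\Nt - n + 1 \le j \le \Nt$ — exactly the roots defining $\ve u_{\Nt - n}$. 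Matching the leading coefficients then reduces to a single algebraic identity among the $S(\cdot)$ values and the constants $\alpha_n$, which is verified using Lemma \ref{lemma_S_properties} (and the parenthetical $\sin$/$\cos$ factors in \eqref{def_alpha} for the even-$N$ case). The argument for $\ve v_n$ is entirely parallel, working inside the space of odd vectors: one factors $\ve v_n = \beta_n S(n)^2/S(\Nmt)^2\cdot\sin(\omega k)\prod(\dots)$, uses $\fourier(\sin(\omega k)\,\ve a) = -\ii\,(\text{odd part})$, and the extra factor $-\ii$ in the claimed identity comes precisely from $\fourier \ve e_{\pm 1}$ carrying a factor $e^{\mp \ii\omega l} \sim \mp\ii$ in the relevant combination; the base case is $n = \Nmt$, where $\ve v_{\Nmt}(k) = \beta_{\Nmt}\sin(\omega k)$, and $\fourier$ of this is computed directly.

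The main obstacle I expect is \textbf{bookkeeping the constants}: the claim is an \emph{exact} equality $\fourier \ve u_n = \ve u_{\Nt - n}$ with no free scalar, so one must verify that the proportionality factor coming out of the induction is exactly $1$, which means checking an identity of the shape $\alpha_n S(n)^{-2}\cdot(\text{factor from }\fourier\text{ of the shift}) = \alpha_{\Nt - n} S(\Nt - n)^{-2}\cdot(\dots)$, and this requires carefully applying all three parts of Lemma \ref{lemma_S_properties}, separately for $N$ odd and $N$ even (where the $\cos(\omega k/2)$ and $\sin(\omega k/2)$ factors in \eqref{formula_u_n_3_even}, \eqref{formula_v_n_3_even} interact with the product). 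A secondary subtlety is making sure the ``width'' / degree counting is watertight at the boundary of the admissible range — in particular the degenerate cases $n = \Nt$ (constant $\ve u$) and, when $N$ is even, $n = N/2$ where $\ve u_{N/2}(k) = 1/(2\sqrt N)$ — and checking that $\Nt - n$ and $\Npt - n$ stay admissible so that the right-hand sides are even defined. Once the intertwining of multiplication-by-$2\cos$ with the symmetric shift is stated cleanly as a lemma (or invoked via the known fact that $\osc$ commutes with $\fourier$), the rest is a finite, if somewhat delicate, verification.
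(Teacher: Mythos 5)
Your plan shares the paper's central ingredients but organizes them differently, and the organization introduces a step you assert without proof. The paper argues \emph{directly} for each $n$: using that an even vector $\ve a$ of width $w$ is a polynomial of degree $\len(\fourier\ve a)$ in $\cos(\omega k)$ while $\fourier\ve a$ has width equal to that degree, it concludes at once that $\fourier\ve u_n$ is a degree-$n$ polynomial in $\cos(\omega l)$ supported in $|l|\le\Nt-n$, hence has the same zero set as $\ve u_{\Nt-n}$, hence $\fourier\ve u_n=C_n\ve u_{\Nt-n}$; the scalar $C_n$ is then read off from the coefficient of $e^{\ii\omega k(\Nt-n)}$ (equivalently the ``edge'' value $\ve u_{\Nt-n}(\Nt-n)$) using Lemma~\ref{lemma_S_properties}. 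No induction and no raising operator.

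Your route replaces this by a downward induction with a raising step, and the raising step is precisely the place where you are vague. You write that applying $\fourier$ to $\ve u_n = (\alpha_n/\alpha_{n+1})\bigl(2\cos(\omega k)-2\cos(\omega(n+1))\bigr)\ve u_{n+1}$ and using the shift intertwining ``produces $\ve u_{\Nt-n}$ with the correct constant,'' and that the product structure is ``converted term by term.'' But the operator $\ve a\mapsto\ve a(\cdot+1)+\ve a(\cdot-1)-2\cos(\omega(n+1))\ve a$ does not act on a product of factors one factor at a time, nor does a shift preserve the zeros of $\ve a$. To actually justify that it sends $\ve u_{\Nt-n-1}$ to a scalar multiple of $\ve u_{\Nt-n}$, you need \emph{both}: (a) the output has width $\le\Nt-n$, because a symmetric shift increases width by at most one; and (b) the output, viewed as a polynomial in $\cos(\omega l)$, has degree $\le n$ rather than $n+1$ --- this is a genuine cancellation that depends on the subtracted constant being exactly $2\cos(\omega(n+1))$: in the Chebyshev basis, the coefficient of $\cos((n+1)\omega l)$ picks up a factor $2\bigl[\cos((n+1)\omega)-\cos((n+1)\omega)\bigr]=0$. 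Only after (a) and (b) can you invoke the ``degree $n$ with $n$ prescribed roots $\Rightarrow$ proportionality'' step. These are the same two facts the paper uses (width of $\fourier\ve u_n$, degree of $\fourier\ve u_n$), so the induction does not bypass them; it repackages them. Finally, pinning down the scalar to $1$ at each induction step still requires the $\alpha_n$ and $S(\cdot)$ identities from Lemma~\ref{lemma_S_properties}, in an amount of bookkeeping comparable to the paper's single direct computation. Your base case $\fourier\ve u_{\Nt}=\ve u_0$ (and similarly $\fourier\ve v_{\Nmt}=-\ii\ve v_1$) is correct and easy to check in both parities. In short: the plan can be made to work, but the ``raising compatibility'' must be spelled out via (a) and (b) above, and once that is done you have effectively re-derived the paper's argument with an extra inductive wrapper that does not save work.
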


\begin{proof}
The proof is based on the following observations:
\begin{itemize}
\item[(i)] 
if $\ve a \in \r^N$ is even, then $\fourier \ve a \in \r^N$ and is also even, 
and both ${\ve a}$ and ${\fourier \ve a}$ are polynomials of $\cos(\omega k)$ (of degree $\len(\fourier \ve a)$ and $\len(\ve a)$, respectively);
\item[(ii)]  if $\ve a \in \r^N$ is odd, then $\ii \fourier \ve a \in \r^N$ and is also odd, 
and both ${\ve a}$ and ${\fourier \ve a}$ are polynomials of $\cos(\omega k)$ (of degree $\len(\fourier \ve a) - 1$ and $\len(\ve a) - 1$, respectively) multiplied by $\sin(\omega k)$.
\end{itemize}

Formula \eqref{formula_u_n} implies that $\ve u_n(k)=P_n(2\cos(\omega k))$ for a polynomial $P_n(z)$ of degree $\Nt - n$, which has 
$\Nt - n$ zeroes located at points $z=\cos(\omega j)$ with $n < j \le \Nt$. Note, furthermore, that these properties describe the polynomial $P_n(z)$ uniquely, up to multiplication by a constant.

Next, we write $2\cos(\omega k)=e^{\i \omega k}+e^{-\i \omega k}$, and we note that $P_n(2\cos(\omega k))$ can be written as a sum of 
terms $e^{\i \omega k l}$ with $|l|\le \Nt -n$. It follows that $\fourier \ve u_n(l) = 0$ when $\Nt-n < |l| \le \Nt$. On the other hand, according to property (i) above, $\fourier \ve u_n(k)=Q_n(2 \cos(\omega k))$ for some polynomial 
$Q_n(z)$ of degree $n$. Since we already know that $Q_n$ has $n$ zeroes located at points $z=\cos(\omega j)$ with $\Nt - n < j \le \Nt$, it follows that $Q_n = C_n P_{\Nt - n}$ for some constant $C_n$. We conclude that $\fourier \ve u_n = C_n \ve u_{\Nt - n}$.

Now our goal is to identify the constant $C_n$. We observe that formula \eqref{formula_u_n} tells us that 
$$
P_n(z)=\frac{\alpha_n S(n)^2}{S(\Nt)^2} z^{\Nt-n}+{\textnormal{ a polynomial of lower degree}}. 
$$
Since $\ve u_n(k)=P_n(2\cos(\omega k))=P_n(e^{\i \omega k}+e^{-\i \omega k})$ we conclude that the coefficient at 
$e^{\ii \omega k (\Nt - n)}$ in the expansion of $\ve u_n(k)$ in powers of $e^{\ii \omega k}$ is equal to $\alpha_n S(n)^2/S(\Nt)^2$.
Since $\ve u_n=\fourier^{-1} (\fourier \ve u_n)=C_n \fourier^{-1} \ve u_{\Nt - n}$, we see that the coefficient at 
$e^{\i \omega k (\Nt - n)}$ is also equal to $C_n  \ve u_{\Nt - n}(\Nt - n)/\sqrt{N}$. Thus we obtain an identity
\begin{equation}\label{equation_C_n}
\frac{\alpha_n S(n)^2}{S(\Nt)^2}=\frac{C_n}{\sqrt{N}} \ve u_{\Nt - n}(\Nt - n), 
\end{equation}
and we need to verify that this identity implies $C_n=1$.

When $N$ is odd, we use formulas \eqref{def_alpha}, \eqref{formula_u_n_3_odd} and Lemma \ref{lemma_S_properties} to find that $\alpha_n S(n)^2=S(2n)^{1/2}$, $S(\Nt)^2=N$ and 
\begin{align*}
\ve u_{\Nt - n}(\Nt - n)=\frac{1}{N} S(N-1-2n)^{1/2} S(2n). 
\end{align*}
Substituting the above results into \eqref{equation_C_n} and using the identity $S(N-1-2n)S(2n)=N$ we conclude that $C_n=1$. 

When $N$ is even, we use formulas \eqref{def_alpha}, \eqref{formula_u_n_3_even} and Lemma \ref{lemma_S_properties} to find that 
$\alpha_n S(n)^2=(S(2n-1) \sin(\omega n/2))^{1/2}$,  $S(\Nt)^2=2N$ and 
\begin{align*}
\ve u_{\Nt - n}(\Nt - n)=\frac{1}{N} (S(N-1-2n)\cos(\omega n/2))^{1/2} S(2n-1)
\end{align*}
Substituting the above results into \eqref{equation_C_n} and using the identity 
$S(N-1-2n)S(2n-1)\cos(\omega n/2) \sin(\omega n/2)=N/4$ we again conclude that $C_n=1$.

In a similar way, one shows that $\ve v_n(k) = 2 \sin(\omega k) \tilde{P}_n(2 \cos(\omega k))$ for a polynomial $\tilde{P}_n$ of degree $\Npt - n - 1$, with zeroes at $2 \cos(\omega j)$, $n < j < \Npt$; and that $\fourier \ve v_n(k) = -2 \ii \sin(\omega k) \tilde{Q}_n(2 \cos(\omega j))$ for a polynomial $\tilde{Q}_n$ of degree $n - 1$, with zeroes at $2 \cos(\omega j)$, $\Npt - n < j < \Npt$. It follows that $\fourier \ve v_n = -\ii \tilde{C}_n \ve v_{\Npt - n}$ for some constant $\tilde{C}_n$.

In order to show that $\tilde{C}_n = 1$, we follow the same argument as in the first part of the proof and we conclude that 
\begin{equation}\label{equation_tilde_C_n}
\frac{\beta_n S(n)^2}{2 S(\Nmt)^2}=\frac{\tilde C_n}{\sqrt{N}} \ve v_{\Npt - n}(\Npt - n). 
\end{equation}

When $N$ is odd, we use formulas \eqref{def_beta}, \eqref{formula_v_n_3_odd} and Lemma \ref{lemma_S_properties} 
to find that $\beta_n S(n)^2=S(2n-2)^{1/2}$, $S(\Nmt)^2=N$ and 
\begin{align*}
\ve u_{\Nt - n}(\Nt - n)=\frac{1}{2N} S(N-2n)^{1/2} S(2n-1). 
\end{align*}
Substituting the above results into \eqref{equation_tilde_C_n} and using the identity $S(N-2n)S(2n-1)=N$ we conclude that $\tilde C_n=1$.

When $N$ is even, we use formulas \eqref{def_beta}, \eqref{formula_v_n_3_even} and Lemma \ref{lemma_S_properties} 
to find that $\beta_n S(n)^2=(S(2n-1)\cos(\omega n/2))^{1/2}$, $S(\Nmt)^2=N/2$ and 
\begin{align*}
\ve u_{\Nt - n}(\Nt - n)=\frac{2}{N} (S(N-1-2n)\sin(\omega n/2))^{1/2} \cos(\omega n/2) S(2n-1). 
\end{align*}
Substituting the above results into \eqref{equation_tilde_C_n} and using the identity 
$S(N-1-2n)S(2n-1)\cos(\omega n/2) \sin(\omega n/2)=N/4$  we conclude that $\tilde C_n=1$. 
\end{proof}

%
%

\begin{definition}
We define $K_m:=\tfloor{(N + 2 +m)/4}$ and
\formula{
 \ve w_n & := \ve u_n + \ve u_{\Nt - n} && \text{when $K_0 \le n \le \Nt$;} \\
 \ve x_n & := \ve v_n + \ve v_{\Npt - n} && \text{when $K_1 \le n < \Npt$;} \\
 \ve y_n & := \ve u_n - \ve u_{\Nt - n} && \text{when $K_2 \le n \le \Nt$;} \\
 \ve z_n & := \ve v_n - \ve v_{\Npt - n} && \text{when $K_3 \le n < \Npt$.}
}
\end{definition}

Let us explain the motivation behind this definition. 
First of all, one can check that 
\begin{equation*}
K_0= \tceil{\Nt/2}, \;\;\; K_1=\tceil{\Npt/2}, \;\;\; K_2=\tfloor{\Nt/2} + 1, \;\;\; K_3=\tfloor{\Npt/2} + 1. 
\end{equation*}
Next, given any even vector $\ve a$, the vector $\ve a \pm \fourier \ve a$ is an eigenvector of the DFT with corresponding eigenvalue $\pm 1$. Thus, we have $\Nt$ eigenvectors $ \ve u_n+ \ve u_{\Nt-n}$ of the DFT with eigenvalue $1$, but some of these will be repeated twice. It is easy to check that there are exactly $\Nt-K_0+1$ distinct eigenvectors of the form  $\ve u_n+\ve u_{\Nt-n}$. 
Similar considerations apply to vectors $\ve u_n-\ve u_{\Nt-n}$, the difference with the previous case is that one of these vectors may be zero: this happens if $2n=\Nt$ for some $n$. Thus, one can check that there exist precisely $\Nt-K_2+1$ distinct eigenvectors of the form $\ve u_n- \ve u_{\Nt-n}$.  The same considerations apply in the case of odd eigenvectors $\ve v_n \pm \ve v_{\Npt - n}$: here we would use the fact that if $\ve a$ is an odd vector, then $\ve a \pm \ii \fourier \ve a$ is an eigenvector of the DFT with the corresponding eigenvalue $\mp i$.  

We recall that the subspaces $E_m$ and $S_n$ were defined in \eqref{def_Em_Sn}. In the following proposition we collect some important properties of vectors $\ve w_n$, $\ve x_n$, $\ve y_n$ and $\ve z_n$.

\begin{proposition}\label{prop_properties_wxyz}
${}$
\begin{itemize}
\item[(i)] For $K_0 \le n \le \Nt$ the vectors $\{\ve w_l\}_{K_0\le l \le n}$ form the basis of the subspace $E_0 \cap S_n$. In particular,  $\dime(E_0 \cap S_n )=\max(n-K_0+1,0)$ for $0\le n \le \Nt$.
\item[(ii)] For $K_1 \le n < \Npt$ the vectors $\{\ve x_l\}_{K_1\le l \le n}$ form the basis of the subspace $E_1 \cap S_n$. In particular,  $\dime(E_1 \cap S_n )=\max(n-K_1+1,0)$ for $0\le n < \Npt$. 
\item[(iii)] For $K_2 \le n \le \Nt$ the vectors $\{\ve y_l\}_{K_2\le l \le n}$ form the basis of the subspace $E_2 \cap S_n$. In particular,  $\dime(E_2 \cap S_n )=\max(n-K_2+1,0)$ for $0\le n \le \Nt$.
\item[(iv)] For $K_3 \le n < \Npt$ the vectors $\{\ve z_l\}_{K_3\le l \le n}$ form the basis of the subspace $E_3 \cap S_n$. In particular,  $\dime(E_3 \cap S_n )=\max(n-K_3+1,0)$ for $0\le n < \Npt$. 
\end{itemize}
\end{proposition}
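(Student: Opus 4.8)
The plan is to prove all four parts by one and the same argument, carried out in four symmetry classes; I would present part~(i) in detail and indicate the (minor) changes needed for (ii)--(iv). First I would record the structural facts about the Gaussian-type vectors. By \eqref{formula_u_n}, $\ve u_n(k)=P_n(2\cos(\omega k))$ where $P_n$ is a polynomial of degree exactly $\Nt-n$ (its leading coefficient $\alpha_n S(n)^2/S(\Nt)^2$ is nonzero, since all $S(j)$ with $0\le j<N$ are positive and all $\alpha_n>0$) whose roots are the $\Nt-n$ distinct numbers $2\cos(\omega j)$ with $n<j\le\Nt$. Hence $\ve u_n$ is even and $\len(\ve u_n)=n$; in particular $\ve u_0,\dots,\ve u_{\Nt}$ have pairwise distinct widths, so they are linearly independent and, the even subspace having dimension $\Nt+1$, form a basis of it, and for $0\le n\le\Nt$ the even vectors of width $\le n$ are exactly $\operatorname{span}\{\ve u_0,\dots,\ve u_n\}$ (read off the coordinates $\ve a(\Nt),\ve a(\Nt-1),\dots$ in turn). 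In the same way \eqref{formula_v_n} gives $\len(\ve v_n)=n$ for $1\le n\le\Nmt$, so $\ve v_1,\dots,\ve v_{\Nmt}$ form a basis of the odd subspace (of dimension $\Nmt$) and its intersection with $S_n$ equals $\operatorname{span}\{\ve v_1,\dots,\ve v_n\}$.

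For part~(i): since $\fourier^2$ is the reflection $\ve a(k)\mapsto\ve a(-k)$, every $\fourier$-eigenvector with eigenvalue $1$ is even, so $E_0\cap S_n\subseteq\operatorname{span}\{\ve u_0,\dots,\ve u_n\}$. Writing $\ve a=\sum_{j=0}^n c_j\ve u_j$ and using $\fourier\ve u_j=\ve u_{\Nt-j}$ from Theorem~\ref{thm:gauss}, the equation $\fourier\ve a=\ve a$ becomes---by linear independence of the $\ve u_i$---the linear system $c_j=c_{\Nt-j}$ for all $j$, with the convention $c_j:=0$ for $j\notin\{0,\dots,n\}$. Since $c_j=0$ for $j>n$, this forces $c_j=0$ for $j<\Nt-n$ as well; hence $E_0\cap S_n=\{\ve 0\}$ when $n<K_0=\tceil{\Nt/2}$, and for $n\ge K_0$ a vector lies in $E_0\cap S_n$ iff $\ve a=\sum_{j=\Nt-n}^n c_j\ve u_j$ with $c_j=c_{\Nt-j}$. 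The solution space of this system has one free parameter per orbit of the involution $j\mapsto\Nt-j$ on $\{\Nt-n,\dots,n\}$; a short count---distinguishing $\Nt$ even, where the fixed point $j=\Nt/2$ gives one extra dimension, from $\Nt$ odd, where there is no fixed point---yields exactly $n-K_0+1$ orbits, the orbit through $j\ge K_0$ corresponding to the eigenvector $\ve u_j+\ve u_{\Nt-j}=\ve w_j$ (and $2\ve u_{\Nt/2}=\ve w_{\Nt/2}$ at the fixed point). Thus $\{\ve w_l\}_{K_0\le l\le n}$ spans $E_0\cap S_n$, and these $n-K_0+1$ vectors have pairwise distinct widths $\len(\ve w_l)=l$ (using $l\ge K_0$, so $l\ge\Nt-l$ and the leading coordinate of $\ve w_l$ is that of $\ve u_l$), hence form a basis; this proves (i).

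Parts (ii)--(iv) follow by repeating this verbatim, with the odd subspace and $\ve v_n$ in place of the even subspace and $\ve u_n$, the relation $\fourier\ve v_n=-\ii\,\ve v_{\Npt-n}$ in place of $\fourier\ve u_n=\ve u_{\Nt-n}$, and the appropriate eigenvalue $(-\ii)^m$ (the same $\fourier^2$ computation shows that $E_2$ consists of even vectors and $E_1,E_3$ of odd ones): the eigenvector equation becomes $c_j=c_{\Npt-j}$ for $E_1$, $c_j=-c_{\Nt-j}$ for $E_2$, and $c_j=-c_{\Npt-j}$ for $E_3$. In the two antisymmetric cases the potential fixed point ($j=\Nt/2$, resp.\ $j=\Npt/2$) is forced to carry coefficient $0$---which is exactly why $K_2$ and $K_3$ exceed $K_0$ and $K_1$ by one in the relevant parity---and the resulting dimension counts match the numbers $K_m=\tfloor{(N+2+m)/4}$ via the identities $K_0=\tceil{\Nt/2}$, $K_1=\tceil{\Npt/2}$, $K_2=\tfloor{\Nt/2}+1$, $K_3=\tfloor{\Npt/2}+1$ recorded before the proposition. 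The only genuine obstacle is this bookkeeping---keeping track of the parities of $\Nt$ and $\Npt$ and of the fixed point of the relevant involution in the four cases; once Theorem~\ref{thm:gauss} and the width formulas $\len(\ve u_n)=\len(\ve v_n)=n$ are in hand, there is no further conceptual difficulty.
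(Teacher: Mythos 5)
Your proof is correct, but it takes a genuinely different route from the paper's. The paper argues indirectly: it records that $\len(\ve w_l)=\len(\ve x_l)=\len(\ve y_l)=\len(\ve z_l)=l$ and that each of these vectors sits in the appropriate $E_m$, deduces from the distinctness of widths the lower bounds $\dime(E_m\cap S_n)\ge n-K_m+1$ together with strict monotonicity in $n$, then sums these at the top ($n=\Nt$ or $\Nmt$), compares against $\sum_m\dime(E_m)=N$ and the arithmetic identity $(\Nt-K_0+1)+(\Npt-K_1)+(\Nt-K_2+1)+(\Npt-K_3)=N$, and concludes by a squeeze that every inequality is an equality --- in particular recovering Schur's dimension formulas as a by-product. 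You instead take the explicit path: identify $\{\ve u_j\}$ (resp.\ $\{\ve v_j\}$) as a triangular basis of the even (resp.\ odd) subspace, observe via $\fourier^2$ that each $E_m$ lies in the correct parity class, translate the eigenvector equation into the coefficient system $c_j=\pm c_{\Nt-j}$ or $c_j=\pm c_{\Npt-j}$ using Theorem~\ref{thm:gauss}, and count orbits of the involution directly. Both arguments are complete and self-contained; the paper's squeeze avoids all parity bookkeeping on $\Nt$, $\Npt$ at the price of a less transparent global identity, whereas your version makes the origin of the constants $K_m$ --- and in particular why $K_2$, $K_3$ exceed $K_0$, $K_1$ in the relevant parity, namely the suppressed fixed point --- visible for each $m$ independently of the other three eigenspaces.
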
  
\begin{proof}
Note that for all admissible $n$ we have
\begin{itemize}
\item[(a)] $\len(\ve w_n)=\len(\ve x_n)=\len(\ve y_n)=\len(\ve z_n)=n$,
\item[(b)]
$
\fourier \ve w_n=\ve w_n, \;\;\; \fourier \ve x_n=-\i  \ve x_n, \;\;\; 
\fourier \ve y_n=-\ve y_n, \;\;\; \fourier \ve z_n=\i  \ve z_n. $
\end{itemize}
Let us denote 
$$
d^{(m)}_n=\dime(E_m \cap S_n). 
$$ 
Note that a vector of width $l$ {cannot} be obtained as a linear combination of vectors of strictly smaller width, thus the vectors 
$\{\ve w_l\}_{K_0\le l \le \Nt}$ are linearly independent. In particular, for $K_0 \le n \le \Nt$ the vectors 
$\{\ve w_l\}_{K_0\le l \le n}$ are linearly independent and they lie in $E_0 \cap S_n$ due to items (a) and (b) above. 

This gives us the following inequalities:  $d^{(0)}_{n}<d^{(0)}_{n+1}$   for  $K_0\le n < \Nt$ and
\begin{equation}\label{inequality1}
 n-K_0+1\le d^{(0)}_n \; {\textnormal{ for }} \;  K_0\le n \le  \Nt. 
\end{equation}
{Similarly,} 
\begin{align}\label{inequality2}
& n-K_1+1\le d^{(1)}_n \; {\textnormal{ for }} \;  K_1\le n < \Npt, \\
\label{inequality3}
& n-K_2+1\le d^{(2)}_n \; {\textnormal{ for }} \;  K_2\le n \le  \Nt, \\
\label{inequality4}
& n-K_3+1\le d^{(3)}_n \; {\textnormal{ for }} \;  K_3\le n < \Npt. 
\end{align}
Using the fact that 
$$
E_0 \cap S_{\Nt}=E_0, \;\;\; E_1 \cap S_{\Nmt}=E_1, \;\;\; E_2 \cap S_{\Nt}=E_2, \;\;\; E_3 \cap S_{\Nmt}=E_3,
$$
we conclude that 
\begin{equation}\label{sum_dimensions}
d^{(0)}_{\Nt}+d^{(1)}_{\Npt-1}+d^{(2)}_{\Nt}+d^{(3)}_{\Npt-1}=\sum\limits_{m=0}^3 \dime(E_m)=N. 
\end{equation}
In deriving the above identity we have also used the fact that the linear subspaces $E_m$ are orthogonal 
and $E_0+E_1+E_2+E_3=\r^N$. Next, one can check that 
$$
(\Nt-K_0+1)+(\Npt-K_1)+(\Nt-K_2+1)+(\Npt-K_3)=N. 
$$
The above result, combined with \eqref{sum_dimensions} and the inequalities \eqref{inequality1}{--}\eqref{inequality4}
proves that $d^{(m)}_{\Nt}=\dime(E_m)=\Nt-K_m+1$ for $n$ even and  $d^{(m)}_{\Nmt}=\dime(E_m)=\Npt-K_m$ for $m$ odd. 
Note that this is equivalent to Schur's result as presented in Table \ref{tab_multiplicities}.

Now, considering the case $m=0$, we summarize what we have proved so far. We know that 
\begin{align*}
& n-K_0+1 \le d^{(0)}_n \; {\textnormal{ for }} \; K_0\le n \le \Nt, \\
& d^{(0)}_n<d^{(0)}_{n+1}\; {\textnormal{ for }} \; K_0 \le n < \Nt, \\
& d^{(0)}_{\Nt}=\Nt-K_0+1. 
\end{align*} 
These results imply that $d^{(0)}_{n}=n-K_0+1$ for 
$K_0\le n \le \Nt$. Since the $n-K_0+1$ vectors $\{\ve w_l\}_{K_0\le l \le n}$ are linearly independent and they lie in the linear subspace $E_0 \cap S_n$ of dimension $d^{(0)}_n=n-K_0+1$, these vectors form the basis for $E_0 \cap S_n$. This ends the proof of item (i). The proof of remaining items (ii), (iii) and (iv) follows exactly the same steps. 
\end{proof}

We define $\{\ve W_n\}_{K_0 \le n \le \Nt}$, $\{\ve X_n\}_{K_1 \le n < \Npt}$, $\{\ve Y_n\}_{K_2 \le n \le \Nt}$ and 
$\{\ve Z_n\}_{K_3 \le n < \Npt}$ to be the sequences of unit vectors,  obtained by applying Gram--Schmidt ortogonalisation to the corresponding sequences $\{\ve w_n\}_{K_0 \le n \le \Nt}$, $\{\ve x_n\}_{K_1 \le n < \Npt}$, $\{\ve y_n\}_{K_2 \le n \le \Nt}$ and 
$\{\ve z_n\}_{K_3 \le n < \Npt}$. Furthermore, we define 
$\ve T_n$, $0 \le n < N$, to be the rearrangement of the vectors $\ve W_n$, $\ve X_n$, $\ve Y_n$ and $\ve Z_n$, obtained by enumerating the rows of the table
\begin{align}
\label{table_T}
\nonumber
 \ve T_0 & = \ve W_{K_0} ,     & \ve T_1 & = \ve X_{K_1} ,     & \ve T_2    & = \ve Y_{K_2} ,     & \ve T_3    & = \ve Z_{K_3} , \\
 \ve T_4 & = \ve W_{K_0 + 1} , & \ve T_5 & = \ve X_{K_1 + 1} , & \ve T_6    & = \ve Y_{K_2 + 1} , & \ve T_7    & = \ve Z_{K_3 + 1} , \\
 \nonumber
 \ve T_8 & = \ve W_{K_0 + 2} , & \ve T_9 & = \ve X_{K_1 + 2} , & \ve T_{10} & = \ve Y_{K_2 + 2} , & \ve T_{11} & = \ve Z_{K_3 + 2} , \\
 \nonumber
 & \ldots & & \ldots & & \ldots & & \ldots
\end{align}

\label{page_def_T}

Note that the columns of the above table have unequal {length. 
In} the case 
$N\equiv 0$ (mod 4)
we have $\Nt=\Npt$, $K_3=K_2=K_1+1=K_0+1$ and the last three rows of the table \eqref{table_T} are 
\begin{align}\label{end_table_4L}
\nonumber
 & \ldots && \ldots && \ldots && \ldots \\
 \ve T_{N-8} & = \ve W_{\Nt-2} , & \ve T_{N-7} & = \ve X_{\Npt-2} , & \ve T_{N-6}    & = \ve Y_{\Nt-1},   & \ve T_{N-5}    & = \ve Z_{\Nmt}, \\
  \nonumber
 \ve T_{N-4} & = \ve W_{\Nt-1} , & \ve T_{N-3} & = \ve X_{\Nmt} , & \ve T_{N-2}    & = \ve Y_{\Nt},   && 
\\
 \nonumber
 \ve T_{N-1} & = \ve W_{\Nt}{.} & & 
& & 
& & 
\end{align}
Similarly, when $N\equiv 1$ (mod 4) we have $\Nt=\Nmt$, $K_3=K_2=K_1=K_0+1$ and the last three
rows of the table \eqref{table_T} are 
\begin{align}\label{end_table_4L+1}
 \nonumber
 & \ldots && \ldots && \ldots && \ldots \\
 \ve T_{N-9} & = \ve W_{\Nt-2} , & \ve T_{N-8} & = \ve X_{\Npt-2} , & \ve T_{N-7}    & = \ve Y_{\Nt-1},   & \ve T_{N-6}    & = \ve Z_{\Npt-2}, \\
 \nonumber
 \ve T_{N-5} & = \ve W_{\Nt-1} , & \ve T_{N-4} & = \ve X_{\Nmt} , & \ve T_{N-3}    & = \ve Y_{\Nt},   & \ve T_{N-2}    & = \ve Z_{\Nmt}, \\
 \nonumber
 \ve T_{N-1} & = \ve W_{\Nt} {;}
  && 
 & & 
 & & 
\end{align}
when $N\equiv 2$ (mod 4) we have $\Nt=\Npt$, $K_3=K_2=K_1=K_0$  and the last three
rows of the table \eqref{table_T} are 
\begin{align}\label{end_table_4L+2}
 \nonumber
 & \ldots && \ldots && \ldots && \ldots \\
 \ve T_{N-10} & = \ve W_{\Nt-2} , & \ve T_{N-9} & = \ve X_{\Npt-2} , & \ve T_{N-8}    & = \ve Y_{\Nt-2},   & \ve T_{N-7}    & = \ve Z_{\Npt-2}, \\
 \nonumber
 \ve T_{N-6} & = \ve W_{\Nt-1} , & \ve T_{N-5} & = \ve X_{\Nmt} , & \ve T_{N-4}    & = \ve Y_{\Nt-1},   & \ve T_{N-3}    & = \ve Z_{\Nmt}, \\
 \nonumber
 \ve T_{N-2} & = \ve W_{\Nt},  && 
 & \ve T_{N-1} & = \ve Y_{\Nt}{;} & & 
\end{align}
and when $N\equiv 3$ (mod 4) we have $\Nt=\Nmt$, $K_3-1=K_2=K_1=K_0$ and the last three
rows of the table \eqref{table_T} are 
\begin{align}\label{end_table_4L+3}
 \nonumber
 & \ldots && \ldots && \ldots && \ldots \\
 \ve T_{N-11} & = \ve W_{\Nt-2} , & \ve T_{N-10} & = \ve X_{\Npt-3} , & \ve T_{N-9}    & = \ve Y_{\Nt-2},   & \ve T_{N-8}    & = \ve Z_{\Npt-2}, \\
 \nonumber
 \ve T_{N-7} & = \ve W_{\Nt-1} , & \ve T_{N-6} & = \ve X_{\Npt-2} , & \ve T_{N-5}    & = \ve Y_{\Nt-1},   & \ve T_{N-4}    & = \ve Z_{\Nmt}, \\
 \nonumber
 \ve T_{N-3} & = \ve W_{\Nt},  & \ve T_{N-2} & = \ve X_{\Nmt}, & \ve T_{N-1} & = \ve Y_{\Nt} {.} & & 
\end{align}
The above examples show that when $N$ is odd, the vectors $\{\ve T_n\}_{0\le n <N}$ are a straightforward rearrangement of the vectors 
$\ve W_n$, $\ve X_n$, $\ve Y_n$ and $\ve Z_n$ as shown in \eqref{table_T}. When $N$ is even, then the vectors 
$\{\ve T_n\}_{0\le n <N-1}$ are also a straightforward rearrangement of the vectors 
$\ve W_n$, $\ve X_n$, $\ve Y_n$ and $\ve Z_n$ as shown in \eqref{table_T}, but the last vector $\ve T_{N-1}$ ``skips" one spot in this table so that it corresponds to a $\ve W$ vector or {a} $\ve Y$ vector. Thus, the last vector $\ve T_{N-1}$ always has a real eigenvalue, irrespective of the residue class of $N$ modulo 4. 

Also, note that the index of vectors $\ve T$ increases by four along each column in 
\eqref{end_table_4L}-\eqref{end_table_4L+3}, except that if $N$ is even then the column containing the vectors 
$\ve T_{N-8}$ and $\ve T_{N-4}$ ends in $\ve T_{N-1}$ and not in $\ve T_N$. This discrepancy explains why in the statements of our results we introduce a ``ghost" vector $\ve T_N=\ve T_{N-1}$: this simple trick allows us to restore the pattern of indices increasing by four along each column and makes the statements of our results more precise.

\vspace{0.25cm}
\noindent
{\bf Proof of Theorem \ref{thm_main1}:}
First let us establish ``existence" part of Theorem \ref{thm_main1}: we will check that the vectors $\ve T_n$ constructed above satisfy conditions (i)-(iii) of Theorem \ref{thm_main1}. We note that 
$$
\len(\ve W_n)=\len(\ve X_n)=\len(\ve Y_n)=\len(\ve Z_n)=n
$$
and
$$
\fourier \ve W_n=\ve W_n, \;\;\; \fourier \ve X_n=-\i \ve X_n, \;\;\; \fourier \ve Y_n = - \ve Y_n, \;\;\; \fourier \ve Z_n=\i \ve Z_n
$$
for all admissible $n$. Thus, by construction, it follows that $\{\ve T_n\}_{0\le n <N}$ is an orthonormal basis of $\r^N$, and that 
$\fourier \ve T_n=(-\i)^n \ve T_n$ and $\len(\ve T_n)=\lfloor (N+n+2)/4 \rfloor$ for $0\le n < N-4$. 
Considering the case $N\equiv 0$ (mod 4) (see \eqref{end_table_4L}), we see that 
$\fourier \ve T_n=(-\i)^n \ve T_n$ for $N-4 \le n <N-1$ and $\len(\ve T_n)=\lfloor (N+n+2)/4 \rfloor$ for $N-4 \le n < N$. 
Thus in the case $N\equiv 0$ (mod 4) the vectors $\{\ve T_m\}_{0\le m \le N-1}$ satisfy conditions (i)-(iii) of Theorem \ref{thm_main1}.
The remaining cases $N\equiv 1,2,3$ (mod 4) can be considered in exactly the same way using 
\eqref{end_table_4L+1}, \eqref{end_table_4L+2} and \eqref{end_table_4L+3}. We leave all the details to the reader.  

Now we need to establish the the ``uniqueness'' part of Theorem \ref{thm_main1}. Assume that $\{\tilde {\ve T}_n\}_{0\le n <N}$ is a set of vectors satisfying conditions (i)-(iii) of Theorem \ref{thm_main1}. We denote $\tilde {\ve T}_{N}:=\tilde {\ve T}_{N-1}$ and define 
\begin{align*}
&\tilde {\ve W}_n=\tilde {\ve T}_{4(n-K_0)} \;\; {\textnormal{ for }}  \; K_0 \le n \le \Nt, \\
&\tilde {\ve X}_n=\tilde {\ve T}_{4(n-K_1)+1} \;\; {\textnormal{ for }}  \; K_1 \le n < \Npt, \\
&\tilde {\ve Y}_n=\tilde {\ve T}_{4(n-K_2)+2} \;\; {\textnormal{ for }}  \; K_2 \le n \le \Nt, \\
&\tilde {\ve Z}_n=\tilde {\ve T}_{4(n-K_3)+3} \;\; {\textnormal{ for }}  \; K_3 \le n < \Npt.
\end{align*}
As we argued on page \pageref{T_N-1_discussion}, conditions (i) and (ii) of Theorem \ref{thm_main1} and Schur's result \eqref{tab_multiplicities} imply that 
$\fourier \tilde {\ve T}_{N-1}=(-\i)^{N-1} \tilde {\ve T}_{N-1}$ (respectively, $\fourier \tilde {\ve T}_{N-1}=(-\i)^N \tilde {\ve T}_{N-1}$) if $N$ is odd (respectively, if $N$ is even). Thus we can arrange $\{\tilde {\ve T}_n\}_{0\le n <N}$ as {in the} table 
{\eqref{table_T}}, and in each case $N\equiv 0,1,2,3$ (mod 4) the table would have the same form of last rows, as shown in 
 \eqref{end_table_4L}, \eqref{end_table_4L+1}, \eqref{end_table_4L+2} and \eqref{end_table_4L+3}. 
One can check that an equivalent way to define  the vectors 
$\{\tilde {\ve W}_n\}_{K_0 \le n \le \Nt}$, $\{\tilde {\ve X}_n\}_{K_1 \le n < \Npt}$, $\{\tilde {\ve Y}_n\}_{K_2 \le n \le \Nt}$ and 
$\{\tilde {\ve Z}_n\}_{K_3 \le n < \Npt}$ is through the table {\eqref{table_T}} with $\ve T_n$ replaced by $\tilde {\ve T}_n$.

Let us consider the sequence of vectors
$\{\tilde {\ve W}_n\}_{K_0\le n \le \Nt}$. From the definition it is clear that these vectors are orthonormal, 
they satisfy $\fourier \tilde {\ve W}_n=\tilde {\ve W}_n$ and
$$ 
\len(\tilde {\ve W}_n)=\len(\tilde {\ve T}_{4(n-K_0)})\le \lfloor (N+4n-4K_0+2)/4 \rfloor=\lfloor (N+2)/4 \rfloor+n-K_0=n. 
$$
In the case $N\equiv 0$ (mod 4) and $n=\Nt$, the last computation should be replaced by the inequality
$\len(\tilde {\ve W}_{\Nt})\le \Nt$, which is trivial, since the width of any vector in $\r^N$ is not greater than $\Nt$. 
These conditions imply that for $K_0 \le n \le \Nt$ the vectors $\{\tilde {\ve W}_l\}_{K_0\le l \le n}$
give an orthonormal basis of the space $E_0 \cap S_n$.

Consider the case $n=K_0$. 
As we established in Proposition \ref{prop_properties_wxyz}(i), we have $\dime(E_0 \cap S_{K_0})=1$ and $\ve W_{K_0}$ is a unit vector lying in  $E_0 \cap S_{K_0}$. Thus $\tilde {\ve W}_{K_0}=\ve W_{K_0}$ or 
$\tilde {\ve W}_{K_0}=-\ve W_{K_0}$. 

Next, consider $n=K_0+1$. Again, 
according to Proposition \ref{prop_properties_wxyz}(i), we have $\dime(E_0 \cap S_{K_0+1})=2$ and $\{\ve W_{K_0}, \ve W_{K_0+1}\}$ is an orthonormal basis of $E_0 \cap S_{K_0+1}$. Since 
$\{\tilde {\ve W}_{K_0}, \tilde{\ve W}_{K_0+1}\}$ is also an orthonormal basis of $E_0 \cap S_{K_0+1}$ and we have already proved that 
$\tilde {\ve W}_{K_0}=\ve W_{K_0}$ or $\tilde {\ve W}_{K_0}=-\ve W_{K_0}$, we conclude that 
$\tilde {\ve W}_{K_0+1}=\ve W_{K_0+1}$ or 
$\tilde {\ve W}_{K_0+1}=-\ve W_{K_0+1}$. 

Proceeding in this way, we show that 
for $K_0\le n \le \Nt$ we have 
$\tilde {\ve W}_n= \ve W_n$ or $\tilde {\ve W}_n=-\ve W_n$.

In exactly the same way we show that for all admissible $n$ we have $\tilde {\ve X}_n= \pm \ve X_n$, 
$\tilde {\ve Y}_n= \pm \ve Y_n$ and $\tilde {\ve Z}_n= \pm \ve Z_n$, and this implies that $\tilde {\ve T}_n=\pm \ve T_n$ for $0\le n <N$. 
\qed

\vspace{0.25cm}
\noindent
{\bf Proof of Proposition \ref{prop_dimensions_eigenspaces}:}
The proof follows from Proposition \ref{prop_properties_wxyz} and 
tables {\eqref{end_table_4L}--\eqref{end_table_4L+3}}.  
\qed

\vspace{0.25cm}
\noindent
{\bf Proof of Theorem \ref{thm_3_term_recurrence}:}
The proof will be based on the following key observation: if $\ve a \in E_m$, then 
$\osc \ve a \in  E_m$ and $\len(\osc \ve a)\le \len(\ve a)+1$. The first statement is true since $\osc$ commutes with the DFT, and the second statement follows from \eqref{def_osc}. 

Let us denote $\nu(n)=\lfloor (N+n+2)/4 \rfloor$, so that $\len(\ve T_n)=\nu(n)$. Consider $0\le n < N-5$. Then $\ve T_n \in S_{\nu(n)} \cap E_m$  for some $m \in \{0,1,2,3\}$, which implies 
$\osc \ve T_n \in S_{\nu(n)+1} \cap E_m$. Thus we can expand
\begin{equation}\label{proof_31}
\osc \ve T_n=\gamma_{n+4} \ve T_{n+4} + \gamma_{n} \ve T_n + \gamma_{n-4} \ve T_{n-4} + \gamma_{n-8} \ve T_{n-8}+\dots,
\end{equation}
where we interpret $\gamma_j=0$ and $\ve T_j=0$ for $j<0$. Using the orthonormality of $\ve T_n$, formula \eqref{proof_31} and the fact that $\osc$ is self-adjoint imply that for any $m,n \in \{0,1,\dots, N-5\}$ such that 
$|m-n|>4$ we have
\begin{equation}\label{proof_32}
0=\tscalar{\osc \ve T_n, \ve T_m}.
\end{equation}
From \eqref{proof_31} and \eqref{proof_32} we find 
$$
\gamma_{n-8}=\tscalar{\osc \ve T_n, \ve T_{n-8}}=0 
$$
and similarly for $\gamma_{n-12}$ and all other coefficients $\gamma_j$ with $j<n-4$. Thus we have proved that there exist sequences $a_n$, $b_n$ and $c_n$ such that for all $0\le n <N-4$
we have  
\begin{equation}\label{proof_33}
\osc \ve T_n=c_n \ve T_{n+4} + a_n \ve T_n + b_{n-4} \ve T_{n-4}. 
\end{equation}
From \eqref{proof_33} we find $a_n=\tscalar{\osc \ve T_n, \ve T_n}$
and 
$$
c_n=\tscalar{\osc \ve T_n, \ve T_{n+4}}=\tscalar{ \ve T_n, \osc \ve T_{n+4}}
=\tscalar{ \ve T_n, c_{n+4} \ve T_{n+8} + a_{n+4} \ve T_{n+4} + b_{n} \ve T_{n}}=b_{n}. 
$$
Thus we can write 
\begin{equation}\label{proof_34}
\osc \ve T_n=b_{n} \ve T_{n+4} + a_n \ve T_n + b_{n-4} \ve T_{n-4}. 
\end{equation}
Finally, we compute 
\begin{align*}
b_{n}^2&=\tnorm{b_{n} {\ve T}_{n+4}}^2=\tnorm{\osc \ve T_n-a_n \ve T_n - b_{n-4} \ve T_{n-4}}^2
\\
&=
\tnorm{a_n \ve T_n}^2+\tnorm{\osc \ve T_n}^2+\tnorm{b_{n-4} \ve T_{n-4}}^2-
2\tscalar{a_n \ve T_n,\osc \ve T_n}
-2\tscalar{\osc \ve T_n,b_{n-4} \ve T_{n-4}}
+2\tscalar{a_n \ve T_n, b_{n-4} \ve T_{n-4}}
\\
&=a_n^2+\tnorm{\osc \ve T_n}^2+b_{n-4}^2-2a_n^2-2b_{n-4}^2+0=
\tnorm{\osc \ve T_n}^2-a_n^2-b_{n-4}^2.
\end{align*}
This ends the proof of Theorem \ref{thm_3_term_recurrence} in the case $n<N-5$. The remaining cases $n=N-5$ if $N$ is odd and 
$n=N-4$ if $N$ is even are left to the reader. 
\qed

\section{Convergence of vectors $\ve T_n$ to Hermite functions}\label{section_convergence}

In the previous section we considered $N$ to be fixed. Now our goal is to study the behaviour of $\ve T_n$ as $N \to \infty$. Note that dependence 
of ${\ve T_n}$ (as well as many other objects in this section) on $N$ is not visible in our notation.

The proof of Theorem \ref{thm_main2} will be preceded by several lemmas. Recall that we denoted $\omega = 2 \pi / N$.

\begin{lemma}\label{lem:g:int}
${}$
\begin{itemize}
\item[(i)]
Assume that $P$, $Q$ are real polynomials (that do not depend on $N$) and $\ve a$ and $\ve b$ are vectors in $\r^N$, having elements $\ve a(k) = P(\sqrt{\omega} k) \exp(-\omega k^2 / 2)$, $\ve b(k) = Q(\sqrt{\omega k}) \exp(-\omega k^2 / 2)$ for $k\in I_N$. Then for any $\epsilon>0$
\begin{equation}\label{PQ_dot_product}
 \sqrt{\omega} \tscalar{\ve a, \ve b} = \int_\R P(x) Q(x) \exp(-x^2) \d x + O(e^{-(\pi/2-\epsilon)N}),
\end{equation}
as $N\to +\infty$. 
\item[(ii)] 
Assume that $P$, $Q$ are real polynomials (that do not depend on $N$) such that $Q(x) \exp(-x^2/2)$ is the continuous Fourier transform of 
$P(x) \exp(-x^2/2)$. Let $\ve a$ and $\ve b$ be vectors in $\r^N$, having elements $\ve a(k) = P(\sqrt{\omega} k) \exp(-\omega k^2 / 2)$ and $\ve b(k) = Q(\sqrt{\omega} k) \exp(-\omega k^2 / 2)$ for $k
\in I_N$. Then for any $\epsilon>0$ 
\begin{equation}\label{norm_a_b}
 \tnorm{\fourier \ve a - \ve b}  = O(e^{-(\pi/4-\epsilon)N}),
\end{equation}
as $N\to +\infty$. 
\end{itemize}
\end{lemma}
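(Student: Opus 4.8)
\textbf{Plan of proof for Lemma~\ref{lem:g:int}.}

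The plan is to reduce both statements to comparing a Riemann-type sum to its integral and then controlling the tails. For part~(i), I would note that $\sqrt{\omega}\tscalar{\ve a,\ve b}$ is exactly $\sqrt{\omega}\sum_{k\in I_N} P(\sqrt{\omega}k)Q(\sqrt{\omega}k)\exp(-\omega k^2)$, which is a Riemann sum for $\int_\R P(x)Q(x)\exp(-x^2)\d x$ with mesh $\sqrt{\omega}$ and sample points $x_k=\sqrt{\omega}k$. The first step is to bound the difference between this finite sum and the \emph{full} bi-infinite sum $\sqrt{\omega}\sum_{k\in\Z} P(\sqrt{\omega}k)Q(\sqrt{\omega}k)\exp(-\omega k^2)$: the omitted terms have $|k|>\Nt\sim N/2$, hence $\sqrt{\omega}|k|\gtrsim \sqrt{N}$, and since $\omega k^2\ge \pi N \cdot (\text{const})$ on that range — more precisely $\omega k^2$ is of order $N$ — polynomial factors are swamped and the tail is $O(e^{-(\pi/2-\epsilon)N})$ after summing the geometric-type series. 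The second step is the standard estimate that a Riemann sum of a smooth rapidly decaying function differs from its integral by an amount that is itself exponentially small in $1/(\text{mesh})$; here the cleanest route is Poisson summation applied to $g(x)=P(x)Q(x)\exp(-x^2)$, whose Fourier transform again decays like a Gaussian times a polynomial, so that $\sqrt{\omega}\sum_{k\in\Z}g(\sqrt{\omega}k)=\sum_{m\in\Z}\hat g(2\pi m/\sqrt{\omega})=\hat g(0)+(\text{terms of size }e^{-\pi^2 m^2/\omega})$, and $\pi^2/\omega=\pi N/2$, giving the claimed $O(e^{-(\pi/2-\epsilon)N})$ once we absorb the polynomial prefactors into the $\epsilon$.

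For part~(ii), I would work with the difference vector $\ve c:=\fourier\ve a-\ve b$ and estimate $\tnorm{\ve c}$ entrywise. Writing out $\fourier\ve a(l)=\frac{1}{\sqrt N}\sum_{k\in I_N}e^{-\i\omega kl}P(\sqrt{\omega}k)\exp(-\omega k^2/2)$ and recalling $\omega=2\pi/N$, one recognizes this as a Riemann sum (again with mesh $\sqrt{\omega}$, evaluated at $x_k=\sqrt{\omega}k$, and note $\frac1{\sqrt N}=\sqrt{\omega}/\sqrt{2\pi}$) for $\frac{1}{\sqrt{2\pi}}\int_\R e^{-\i x\,(\sqrt{\omega}l)}P(x)\exp(-x^2/2)\d x$, which by hypothesis equals $Q(\sqrt{\omega}l)\exp(-\omega l^2/2)=\ve b(l)$. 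So $\ve c(l)$ is the Riemann-sum-minus-integral error for the function $x\mapsto e^{-\i x\sqrt{\omega}l}P(x)\exp(-x^2/2)$, plus the tail from restricting $k$ to $I_N$. The tail contribution is controlled as in part~(i): terms with $|k|>\Nt$ contribute $\exp(-\omega k^2/2)$ of order $e^{-\pi N/4}$. For the Riemann-sum error I would again invoke Poisson summation, now for $h(x)=e^{-\i x\sqrt{\omega}l}P(x)\exp(-x^2/2)$: since modulation only shifts $\hat h$, the nonzero-frequency terms in Poisson summation are of size $\exp(-\tfrac12(2\pi m/\sqrt{\omega}-\sqrt{\omega}l)^2)$ summed against a polynomial; the worst case is $\sqrt{\omega}l$ as large as $\sqrt{\omega}\Nt\sim \pi\sqrt{N/(2\pi)}=O(\sqrt N)$, which is still $o$ of $2\pi/\sqrt{\omega}=\sqrt{2\pi N}$, so the exponent is $-\tfrac12(\sqrt{2\pi N}(1+o(1)))^2\sim -\pi N$, comfortably beating $e^{-(\pi/4-\epsilon)N}$ even after multiplying by the polynomial in $l$ and by $\sqrt{N}$ from $\tnorm{\cdot}$. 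Collecting, $\tnorm{\ve c}^2=\sum_{l\in I_N}|\ve c(l)|^2 = N\cdot O(e^{-(\pi/2-\epsilon)N})=O(e^{-(\pi/2-\epsilon')N})$, i.e.\ $\tnorm{\ve c}=O(e^{-(\pi/4-\epsilon)N})$.

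I expect the main obstacle to be the bookkeeping in part~(ii): keeping careful track of how $\sqrt{\omega}l$ grows with $l\in I_N$ relative to $2\pi/\sqrt{\omega}$, so that the Poisson-summation error exponent is genuinely of order $N$ and not merely $\sqrt N$, and then checking that these pointwise bounds aggregate correctly under the $\ell^2$-norm (the extra factor $\sqrt N$ from $|I_N|$ terms must be harmless, which it is since we have room between $\pi/2$ and $\pi/4$). A secondary, purely technical nuisance is that the polynomials $P,Q$ are fixed but arbitrary, so all the constants depend on their degrees; this is exactly what the ``for any $\epsilon>0$'' is absorbing, and I would state once and for all that all $O(\cdot)$ constants may depend on $P$, $Q$ and $\epsilon$. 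If one prefers to avoid Poisson summation, an alternative for the Riemann-sum error is the Euler--Maclaurin formula together with the observation that all derivatives of a Gaussian-times-polynomial are again Gaussian-times-polynomial and hence uniformly small on the relevant range; but Poisson summation gives the sharp exponential rate essentially for free, so that is the route I would take.
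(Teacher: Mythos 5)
Your approach is the same as the paper's: reduce to a Riemann sum, separate the tail from the bi-infinite sum, and control the Riemann-sum error via Poisson summation. Part~(i) is correct as written, and the organization of part~(ii) (write $\fourier\ve a(l)$ as a Riemann sum that Poisson-converts to $\ve b(l)$ plus aliases, bound tail and aliases pointwise, then aggregate in $\ell^2$) is also the route the paper takes.

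There is, however, a genuine computational slip in your part~(ii). You claim that $\sqrt{\omega}\,\Nt$ is $o\bigl(2\pi/\sqrt{\omega}\bigr)$, and deduce a Poisson-error exponent of order $-\pi N$. This is false: $\sqrt{\omega}\,\Nt \sim \sqrt{2\pi N}/2 = \pi/\sqrt{\omega}$, which is exactly \emph{half} of $2\pi/\sqrt{\omega}$, not negligible compared with it. The correct estimate, as in the paper, is that for $l\in I_N$ and $m\ne 0$ one has
\begin{equation*}
\bigl\lvert \sqrt{\omega}\,l + 2\pi m/\sqrt{\omega}\bigr\rvert \;\ge\; \tfrac{\pi}{\sqrt{\omega}}\,(2|m|-1)\;\ge\;\tfrac{\pi}{\sqrt{\omega}}\,|m|,
\end{equation*}
so the alias at $m=\pm 1$ contributes $e^{-\pi^2/(2\omega)}=e^{-\pi N/4}$, not $e^{-\pi N}$. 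Your ``comfortable room'' of a factor of four in the exponent does not exist; the Poisson error and the tail error are of the \emph{same} order $O(e^{-(\pi/4-\epsilon)N})$, and the $\epsilon$ is needed to absorb both the polynomial prefactors and the $\sqrt{N}$ from passing to the $\ell^2$ norm. Fortunately this slip does not invalidate the conclusion, since your independently correct tail bound $e^{-\pi N/4}$ already saturates the claimed rate, and the $\ell^2$ aggregation still yields $\tnorm{\fourier\ve a-\ve b}=O(e^{-(\pi/4-\epsilon)N})$; but you should replace the ``$o$ of $2\pi/\sqrt{\omega}$'' claim with the sharp lower bound above.
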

\begin{proof}
The proof is based on the Poisson summation formula: for a function $f$ in Schwartz class and any $a>0$, $y\in \r$ we have 
\begin{equation}\label{Poisson_summation}
a \sum\limits_{k \in \Z} f(a k)e^{-\i a k y}=\sqrt{2 \pi}\sum\limits_{k\in \Z} ({\mathcal F}f)(y+2\pi k/a). 
\end{equation}
We will also need the following estimates: for any $\alpha>0$, $\beta>0$, $\epsilon>0$ and for any polynomial $P$ (which does not depend on $N$) we have 
\begin{align}
\label{estimate_1}
&\sqrt{\omega}\sum\limits_{k\ge \alpha N} P(\sqrt{\omega}k) e^{-\beta \omega k^2}=O(e^{-(2\pi \alpha^2 \beta-\epsilon) N}),  \\
\label{estimate_2}
&\sum\limits_{k=1}^{\infty} P(k/\sqrt{\omega}) e^{-\beta k^2/\omega}=O(e^{- (\beta/(2\pi)-\epsilon)N}),
\end{align}
as $N\to +\infty$. We leave it to the reader to verify these estimates. 

Let us prove item (i). Let $\degr(P)=n$  and $\degr(Q)=m$ and let $R(x)e^{-x^2/4}$ be the continuous 
Fourier transform of $P(x)Q(x) e^{-x^2}$, for some polynomial $R$ of degree $n+m$. We write 
\begin{align*}
 \sqrt{\omega} \tscalar{\ve a, \ve b}&=\sqrt{\omega} \sum\limits_{k\in I_N} P(\sqrt{\omega }k)Q(\sqrt{\omega }k)e^{-\omega k^2}
 =\sqrt{\omega} \sum\limits_{k\in \Z} P(\sqrt{\omega}k)Q(\sqrt{\omega}k)e^{-\omega k^2}+O(e^{-(\pi/2-\epsilon)N})\\
 &=\int_{\r} P(x)Q(x)e^{-x^2}\d x+\sqrt{2\pi} \sum\limits_{k\in \Z, k\neq 0}
 R(2\pi k/\sqrt{\omega})e^{-(2 \pi k/\sqrt{\omega})^2/4}+O(e^{-(\pi/2-\epsilon)N})\\
 &=\int_{\r} P(x)Q(x)e^{-x^2}\d x+O(e^{-(\pi/2-\epsilon)N}),
\end{align*}
where in the second step we used estimate \eqref{estimate_1}, in the third step we applied the Poisson summation formula 
\eqref{Poisson_summation} and in the last step {we used} estimate \eqref{estimate_2}. The above result gives us \eqref{PQ_dot_product}.

Let us now prove item (ii). Assume that $\degr(P)=n$. We fix $l\in I_N$ and compute
\begin{align}\label{item_ii_proof1}
\nonumber
{\fourier \ve a}(l)&=\frac{1}{\sqrt{2\pi }} \times \sqrt{\omega} 
\sum\limits_{k\in I_N} P(\sqrt{\omega} k)e^{-\omega k^2/2-\i \sqrt{\omega} k \times \sqrt{\omega} l}\\
&=
\frac{1}{\sqrt{2\pi }} \times \sqrt{\omega} 
\sum\limits_{k\in \Z} P(\sqrt{\omega} k)e^{-\omega k^2/2-\i \sqrt{\omega} k \times \sqrt{\omega} l}+O(e^{-(\pi/4-\epsilon)N})\\
\nonumber
&=\sum\limits_{k\in \Z} Q(\sqrt{\omega} l+2\pi k/\sqrt{\omega})e^{-(\sqrt{\omega} l+2\pi k/\sqrt{\omega})^2/2}+O(e^{-(\pi/4-\epsilon)N}).
\end{align}
Note that since $l \in I_N$, we have $|l|\le N/2$, thus for all $k\neq 0$ we have 
$$
\big | \sqrt{\omega} l+2\pi k/\sqrt{\omega}\big|\ge \frac{\pi}{\sqrt{w}} (2|k|-1)\ge  \frac{\pi}{\sqrt{w}} |k|.
$$
Thus the sum in the right-hand side of \eqref{item_ii_proof1} can be estimated as follows
\begin{align*}
\sum\limits_{k\in \Z} Q(\sqrt{\omega} l+2\pi k/\sqrt{\omega})e^{-(\sqrt{\omega} l+2\pi k/\sqrt{\omega})^2/2}=
\ve b(l)+O\Big(\sum\limits_{k\ge 1} (k/\sqrt{\omega})^n e^{-\pi^2 k^2/(2\omega)}\Big)=
\ve b(l)+O(e^{-(\pi/4-\epsilon)N}).
\end{align*}
The above result combined with \eqref{item_ii_proof1} imply \eqref{norm_a_b}.
\end{proof}

Recall that we denoted $t_k = 2 \omega^{-1/2} \sin(\omega k / 2)$. Note the following two properties:
\begin{itemize}
\item[(i)]  $t_k/(\sqrt{\omega} \, k) \to 1$ as $N\to \infty$;
\item[(ii)] $2 \sqrt{\omega} \tabs{k} / \pi \le \tabs{t_k} \le \sqrt{\omega} \tabs{k}$ for $k\in I_N$.
\end{itemize}

Let us define a vector $\ve G \in \r^N$ as follows:
\begin{equation}\label{def_G_k}
\ve G(k)=\sqrt[4]{\omega/\pi} \; e^{-\omega k^2/2}, \;\;\; k\in I_N.
\end{equation}
Note that $\tnorm{\ve G}=1+O(e^{-N})$, due to Lemma \ref{lem:g:int}(i).

\begin{lemma}\label{lem:tk:k}
Suppose that $P$ is a real polynomial (that does not depend on $N$) and $\ve a \in \r^N$ is defined by 
$\ve a(k)=(P(t_k)-P(\sqrt{\omega}k))\ve G(k)$ for $k\in I_N$. Then for any $\eps > 0$ 
we have $\tnorm{\ve a}=O(N^{-1 + \eps})$ as $N\to \infty$.
\end{lemma}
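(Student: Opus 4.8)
The plan is to expand $P(t_k) - P(\sqrt{\omega}k)$ termwise and control each monomial contribution. Writing $P(x) = \sum_{j=0}^d c_j x^j$, we have
\formula{
 P(t_k) - P(\sqrt{\omega}k) = \sum_{j=1}^d c_j \big(t_k^j - (\sqrt{\omega}k)^j\big),
}
and since $t_k^j - (\sqrt{\omega}k)^j = (t_k - \sqrt{\omega}k)\sum_{i=0}^{j-1} t_k^i (\sqrt{\omega}k)^{j-1-i}$, everything reduces to estimating the single quantity $t_k - \sqrt{\omega}k$ against a polynomial weight. Using the Taylor expansion $\sin(u) = u - u^3/6 + O(u^5)$ with $u = \omega k/2$ and $|u| \le \pi/2$ for $k \in I_N$, I would show that $|t_k - \sqrt{\omega}k| = \frac{2}{\sqrt{\omega}}\,|\sin(\omega k/2) - \omega k/2| \le C \omega^{3/2} |k|^3 = C \omega \cdot (\sqrt{\omega}|k|)^3$ for an absolute constant $C$. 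The key point is the extra factor $\omega = 2\pi/N$, which is what will ultimately produce the $N^{-1}$ rate; combined with property (ii) of $t_k$ (so that powers of $t_k$ are bounded by powers of $\sqrt{\omega}|k|$ up to constants), each term $t_k^j - (\sqrt{\omega}k)^j$ is bounded by $C_j \omega \cdot (\sqrt{\omega}|k|)^{j+2}$.

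Next I would bound the norm. We get
\formula{
 \tnorm{\ve a}^2 = \sum_{k \in I_N} \big(P(t_k) - P(\sqrt{\omega}k)\big)^2 \ve G(k)^2 \le C \omega^2 \sum_{k \in I_N} R(\sqrt{\omega}|k|) e^{-\omega k^2},
}
where $R$ is some fixed polynomial with nonnegative coefficients (not depending on $N$), coming from squaring the bound above and absorbing the $\sqrt{\omega/\pi}$ from $\ve G(k)^2$ into the constant. By Lemma~\ref{lem:g:int}(i) applied with $\ve a = \ve b$ having entries $\sqrt{R(\sqrt{\omega}|k|)}\,e^{-\omega k^2/2}$ — or more directly by the elementary estimate that $\sqrt{\omega}\sum_{k \in I_N} R(\sqrt{\omega}|k|)e^{-\omega k^2}$ is a Riemann sum converging to $\int_\R R(|x|)e^{-x^2}\d x$, hence $O(1)$ — the sum $\sum_{k \in I_N} R(\sqrt{\omega}|k|)e^{-\omega k^2}$ is $O(\omega^{-1}) = O(N)$. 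Therefore $\tnorm{\ve a}^2 = O(\omega^2 \cdot N) = O(N^{-2} \cdot N) = O(N^{-1})$, which gives $\tnorm{\ve a} = O(N^{-1/2})$.

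This is in fact slightly stronger than the claimed $O(N^{-1+\eps})$, so the $\eps$ in the statement is only there for uniformity with the other lemmas; I would either state the sharper $O(N^{-1/2})$ bound and note it suffices, or keep $O(N^{-1+\eps})$ for consistency. I do not expect a serious obstacle here: the argument is essentially a careful bookkeeping of the $\omega$-powers combined with the two elementary properties of $t_k$ already listed and the Gaussian summation estimate. The one point requiring mild care is making sure the polynomial $R$ and all constants are genuinely independent of $N$ — this is automatic since $P$ is fixed and the bounds $|t_k| \le \sqrt{\omega}|k|$, $|\sin u - u| \le |u|^3/6$ hold uniformly — and that the Riemann-sum/Gaussian estimate is applied correctly with the $\sqrt{\omega}$ normalization, which is exactly the content of Lemma~\ref{lem:g:int}(i).
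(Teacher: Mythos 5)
Your approach is genuinely different from the paper's: you expand monomial-by-monomial and extract a clean factor of $\omega$ from $|t_k - \sqrt{\omega}k|$ uniformly in $k$, whereas the paper splits $I_N$ at $|k| = N^{1/2+\delta}$, uses a mean-value-theorem bound with $P'(t)e^{-t^2/2}$ on the inner range and Gaussian super-polynomial decay on the outer range. Your route avoids the $\delta$-bookkeeping entirely and is conceptually cleaner; the monomial estimate $|t_k^j - (\sqrt{\omega}k)^j| \le C_j\,\omega\,(\sqrt{\omega}|k|)^{j+2}$ is correct, and when carried through properly it yields $\tnorm{\ve a} = O(N^{-1})$ without any $\eps$ loss, which is in fact slightly \emph{sharper} than the paper's bound.

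However, the final bookkeeping has two compounding slips, and — more importantly — the conclusion you draw from the erroneous answer is backwards. First, you absorb the factor $\sqrt{\omega/\pi}$ from $\ve G(k)^2$ ``into the constant,'' but $\sqrt{\omega/\pi} = \sqrt{2/N}$ is $N$-dependent; discarding it wastes a factor of $N^{1/2}$ (the inequality still holds, it is just lossy). Second, you state that $\sum_{k\in I_N} R(\sqrt{\omega}|k|)e^{-\omega k^2} = O(\omega^{-1}) = O(N)$, but the Riemann-sum relation $\sqrt{\omega}\sum_k(\cdots) = O(1)$ gives $\sum_k(\cdots) = O(\omega^{-1/2}) = O(N^{1/2})$, not $O(N)$. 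Each slip costs a factor of $N^{1/2}$, and together they turn the true $\tnorm{\ve a}^2 = O(\omega^2) = O(N^{-2})$ into your claimed $O(N^{-1})$, i.e.\ $\tnorm{\ve a} = O(N^{-1/2})$. You then assert this is ``slightly stronger than the claimed $O(N^{-1+\eps})$.'' That is the wrong direction: for $\eps < 1/2$ we have $N^{-1+\eps} < N^{-1/2}$, so $O(N^{-1/2})$ is \emph{weaker}, and as written your argument does not establish the lemma. (A minor additional point: you cannot invoke Lemma~\ref{lem:g:int}(i) with entries $\sqrt{R(\sqrt{\omega}|k|)}\,e^{-\omega k^2/2}$, since $\sqrt{R}$ is not a polynomial; the direct Riemann-sum-plus-tail argument you offer as an alternative is the right one, though it needs a one-line tail estimate, which the Gaussian decay makes routine.) Redo the $\omega$-accounting keeping the $\sqrt{\omega/\pi}$ factor and using $\sum_k = O(\omega^{-1/2})$, and you recover the correct $O(N^{-1})$ bound, which does prove the claim.
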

\begin{proof}
Let us define $\delta =\min(1/2,\epsilon/4)$. Then for  $|k|\le N^{1/2+\delta}$ we have
\begin{equation}\label{estimate_tk_omega_k}
 \tabs{t_k - \sqrt{\omega} k}  = \sqrt{\omega} \tabs{k} \times \tabs{(\omega k / 2)^{-1} \sin(\omega k / 2) - 1} 
  = \sqrt{\omega} \tabs{k} \times O((\omega k)^2) = O(N^{-1+3\delta}).
\end{equation}
Since the function  {$P'(t) \exp(-t^2/2)$} is bounded {on $\r$} and $\ve G(k)=O(N^{-1/4})$, 
we have 
\begin{align*}
|\ve a(k)| =  |P(t_k)-P(\sqrt{\omega}k)|\ve G(k) \le 
\max\limits_{t\in \r} \big[ {|P'(t)|} e^{-t^2/2} \big]  \times \tabs{t_k - \sqrt{\omega} k} \times O(N^{-1/4})=
O(N^{-5/4+3\delta}),
\end{align*}
for $|k|\le N^{1/2+\delta}$.
It follows that
\begin{equation}\label{l1}
 \sum_{\tabs{k} \le N^{1/2 + \delta}} \ve a(k)^2  = 
 O(N^{1/2 + \delta}) \times O(N^{-5/2 + 6 \delta}) = O(N^{-2 + 7\delta}) .
\end{equation}
At the same time, for any $p > 0$ we have ${P}(t_k) \ve G(k) = O(N^{-p})$ and ${P}(\sqrt{\omega} k)\ve g(k) =  O(N^{-p})$ uniformly in $k$ such that 
$\tabs{k} > N^{1/2 + \delta}$. Thus,
\begin{equation}\label{l2}
 \sum\limits_{\substack{k\in I_N \\ \tabs{k}>N^{1/2+\delta}}}  \ve a(k)^2 = O(N^{1 - 2 p}) .
\end{equation}
Estimates \eqref{l1} and \eqref{l2} imply that $\tnorm{\ve a}=O(N^{-1+7\delta/2})=O(N^{-1+\epsilon})$. 
\end{proof}



We recall that the vectors $\ve u_n$ and $\ve v_n$ were introduced in Definition \ref{definition_un_vn} and we define 
the normalised vectors $\ve U_n = \tnorm{\ve u_n}^{-1} \ve u_n$ and $\ve V_n = \tnorm{\ve v_n}^{-1} \ve v_n$.
The next result is crucial in the proof of Theorem \ref{thm_main2}: it shows that the vectors $\ve U_n$ (respectively, $\ve V_n$) are analogues of Gaussian function $e^{-x^2/2}$  (respectively, $x e^{-x^2/2}$) if $n=N/4+O(1)$ as $N\to +\infty$. This result was first established in \cite{Kuz_2015} using the Euler-Maclaurin summation formula, here we give a simpler and shorter proof.

\begin{lemma}\label{lem:u:g} Assume that $P$ is a real polynomial that does not depend on $N$ and define vectors 
$\ve a$ and $\ve b$ in $\r^N$ by $\ve a(k)=P(t_k)(\ve U_n(k)-\ve G(k))$ and 
$\ve b(k)=P(t_k)(\ve V_n(k)-\sqrt{2} t_k \ve G(k))$ for $k\in I_N$. 
Then for any  $\epsilon>0$ we have $\tnorm{\ve a}=O(N^{-1+\epsilon})$ 
and $\tnorm{\ve b}=O(N^{-1+\epsilon})$ as $N\to \infty$ and $n=N/4+O(1)$. 
\end{lemma}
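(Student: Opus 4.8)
The key is the product formula for $\ve u_n$, namely
\[
\ve u_n(k)=\alpha_n \prod_{j=n+1}^{\Nt}\bigl(1-(t_k/t_j)^2\bigr),
\]
together with the fact that the product, once we divide each factor index $j$ by $\sqrt\omega$, is a Riemann-sum approximation to an integral that evaluates (via the Gaussian integral / the infinite product for $\sin$ or directly) to $e^{-t_k^2/2}$ up to a normalising constant. The plan is: first, write $\log \ve u_n(k)$ as a sum over $j$ of $\log(1-(t_k/t_j)^2)$; second, show that as $N\to\infty$ with $n=N/4+O(1)$ this sum converges to $-t_k^2/2 + (\text{constant independent of }k)$ with an error that is $O(N^{-1})$ uniformly on the range $|k|\le N^{1/2+\delta}$ (outside that range both $\ve U_n(k)$ and $\ve G(k)$ are super-polynomially small, so that part contributes negligibly to the norm, exactly as in the proof of Lemma~\ref{lem:tk:k}); third, exponentiate to get $\ve u_n(k)/\alpha_n = C\,e^{-t_k^2/2}(1+O(N^{-1}))$ uniformly on the relevant range; fourth, combine with $\tnorm{\ve u_n}=$ (the same constant)$\cdot(1+O(N^{-1}))$, which follows from Lemma~\ref{lem:g:int}(i) applied to $e^{-t_k^2/2}$ after using property (i) of $t_k$, to conclude $\ve U_n(k)=e^{-t_k^2/2}\sqrt[4]{\omega/\pi}\,(1+O(N^{-1}))$; and finally multiply by $P(t_k)$ and sum in $k$, using that $|P(t)|e^{-t^2/2}$ is bounded, to get $\tnorm{\ve a}=O(N^{-1+\epsilon})$. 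A subtle point: what we really want is $\ve U_n(k)-\ve G(k)$, and $\ve G(k)=\sqrt[4]{\omega/\pi}\,e^{-\omega k^2/2}$ uses $\omega k^2$ rather than $t_k^2$; the discrepancy between $e^{-t_k^2/2}$ and $e^{-\omega k^2/2}$ is handled precisely by Lemma~\ref{lem:tk:k} (with $P(t)=e^{-t^2/2}$ absorbed appropriately, or by splitting the difference into the $t_k$–vs–$\sqrt\omega k$ part and the product-approximation part).

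**Carrying it out.** For the convergence of the logarithmic sum I would use property (ii) of $t_k$, $2\sqrt\omega|k|/\pi\le|t_k|\le\sqrt\omega|k|$, to control $\log(1-(t_k/t_j)^2)$ on $n<j\le\Nt$: since $n\sim N/4$ and $j\le N/2$, the ratio $|t_k/t_j|$ is bounded away from $1$ for $|k|$ in the admissible range (one should first restrict to $|k|\le n$, but $n\to\infty$ so this is no restriction for fixed-degree $P$ after the super-polynomial tail estimate), so each logarithm is well-defined and the sum is absolutely convergent with terms $O((t_k/t_j)^2)=O(k^2/j^2)$. Replacing the sum by $\sum_j \log(1-(\sqrt\omega k/(\sqrt\omega j))^2) = \sum_j\log(1-(k/j)^2)$ up to an $O(N^{-1})$ error per term (using $t_j=\sqrt\omega j(1+O((\omega j)^2))$, uniformly since $\omega j\le\pi$), and then recognizing $\sum_{j=n+1}^{\Nt}\log(1-(k/j)^2)$: here I would either appeal to the explicit closed form from Lemma~\ref{lemma_u_v_properties}(ii), which expresses $\ve u_n(k)$ in terms of the $S$-function, and then do asymptotics of $\log S$, or directly estimate the telescoping-type sum. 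In fact the cleanest route is to start from Lemma~\ref{lemma_u_v_properties}(ii): for $N$ odd, $\ve u_n(k)=\alpha_n S(n)^2 N^{-2} S(N-n-1-k)S(N-n-1+k)$, so $\ve u_n(k)/\ve u_n(0) = S(N-n-1-k)S(N-n-1+k)/S(N-n-1)^2$, and one needs the asymptotics of $\log S(m)$ for $m$ near $N-n-1\sim 3N/4$. Writing $\log S(m)=\sum_{j=1}^m\log(2\sin(\omega j/2))$ and using the Euler–Maclaurin formula — or, to keep with the paper's stated preference for shorter arguments, using that $\log S(m-k)+\log S(m+k)-2\log S(m)=\sum$ of second differences — gives the quadratic-in-$k$ behaviour $-t_k^2/2$ plus $O(N^{-1})$ after dividing by the $k$-free normalisation; the $N$ even case is entirely analogous with the extra $\cos(\omega k/2)$ factor, which is $1+O(\omega^2 k^2)$ on the relevant range and hence absorbable.

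**The odd/$\ve v_n$ case.** The vector $\ve v_n$ differs from $\ve u_n$ only by the factor $\beta_n\sin(\omega k)$ in place of $\alpha_n$ and by the upper product limit $\Nmt$ in place of $\Nt$; the same analysis shows $\ve V_n(k)=c\,\sin(\omega k)e^{-t_k^2/2}(1+O(N^{-1}))$ uniformly on $|k|\le N^{1/2+\delta}$, and since $\sin(\omega k)=\omega k(1+O(\omega^2k^2))$ and $t_k=\sqrt\omega k(1+O(\omega^2 k^2))$, we get $\sin(\omega k)=\sqrt\omega\, t_k(1+O(N^{-1}))$ on that range, so $\ve V_n(k)=c'\sqrt\omega\, t_k e^{-t_k^2/2}(1+O(N^{-1}))$; matching the normalisation (again via Lemma~\ref{lem:g:int}(i), which gives $\tnorm{\sqrt\omega\,t_k e^{-t_k^2/2}}$ in terms of $\int x^2 e^{-x^2}\d x$) pins down the constant and yields $\ve V_n(k)=\sqrt2\,t_k\,\ve G(k)(1+O(N^{-1}))$, whence $\tnorm{\ve b}=O(N^{-1+\epsilon})$ after multiplying by $P(t_k)$ and summing, with the tail $|k|>N^{1/2+\delta}$ again super-polynomially small.

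**Main obstacle.** The delicate part is obtaining the $O(N^{-1})$ — not merely $o(1)$ — uniform control on the product/logarithmic sum, which requires the error analysis to be quantitative: one must track that the Riemann-sum (or Euler–Maclaurin) error for $\sum_{j}\log(1-(k/j)^2)$ over a window of length $\sim N/4$ is genuinely $O(k^2/N)$, and that replacing $t_j$ by $\sqrt\omega j$ costs only $O(k^2/N)$ after summation, all uniformly for $|k|\le N^{1/2+\delta}$ so that squaring, summing $O(N^{1/2+\delta})$ terms each of size $O(N^{-1/4}\cdot N^{-1}\cdot\text{poly})$, and taking the square root lands at $O(N^{-1+\epsilon})$. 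The role of the hypothesis $n=N/4+O(1)$ is exactly to place the product's index window $(n,\Nt]$ in a regime where $\log(1-(k/j)^2)$ is uniformly smooth (bounded away from the singularity at $j=k$) and where the limiting integral produces the Gaussian $e^{-t_k^2/2}$ with the right coefficient; a different choice of $n$ would produce a Gaussian of a different width, so this hypothesis is essential and should be used transparently when identifying the limit.
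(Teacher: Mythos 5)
Your high-level strategy is right (restrict to $\tabs{k}\le N^{1/2+\delta}$, approximate $\log\ve U_n(k)$, control the tail separately, then multiply by $P(t_k)$ and sum), and your decision to take logs of the product formula for $\ve u_n$ is exactly the paper's starting point. But the specific step you lean on in the ``carrying it out'' paragraph --- replacing $t_j$ by $\sqrt\omega\,j$ in the product indices $j\in(n,\Nt]$ with a claimed ``$O(N^{-1})$ error per term, using $t_j=\sqrt\omega j(1+O((\omega j)^2))$, uniformly since $\omega j\le\pi$'' --- is incorrect, and it is not a repairable detail. For $j$ in the bulk of the range, $\omega j$ is of order $1$, so $(\omega j)^2=O(1)$ and the ``error'' is not small at all: $t_j/(\sqrt\omega\,j)=\sin(\omega j/2)/(\omega j/2)$ runs from about $2\sqrt2/\pi\approx 0.90$ at $j\approx N/4$ down to $2/\pi\approx 0.64$ at $j\approx N/2$, a fixed $O(1)$ discrepancy. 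Concretely, this substitution changes the leading term of the log-sum: you would compute $\sum_j(\sqrt\omega j)^{-2}\approx\tfrac1\omega\int_{N/4}^{N/2}x^{-2}\,\d x=\tfrac1\pi$ instead of $\sum_j t_j^{-2}=\tfrac12+O(N^{-1})$, and hence you would obtain the Gaussian $e^{-t_k^2/\pi}$ rather than the needed $e^{-t_k^2/2}$. So this route produces the wrong limit, not just a weaker error bound.

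The paper's argument avoids this entirely by never approximating $t_j$. It writes $t_j^{-2}=\tfrac{\omega}{4\sin^2(\pi j/N)}$ exactly, so that
\[
t_k^2\sum_{j=n+1}^{\Nt}t_j^{-2}
=\frac{t_k^2}{2}\cdot\frac{\pi}{N}\sum_{j=n+1}^{\Nt}\frac{1}{\sin^2(\pi j/N)},
\]
and the last sum is a Riemann sum for $\int_{\pi/4}^{\pi/2}\csc^2 x\,\d x=1$, which is $1+O(N^{-1})$ because $\csc^2$ is smooth and bounded on $[\pi/4,\pi/2]$ (this is where the hypothesis $n=N/4+O(1)$ is used). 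The approximation $t_k\approx\sqrt\omega\,k$ is only used for the small index $k$, where $\omega k/2=O(N^{-1/2+\delta})$ is genuinely small, which is legitimate. Your proposed ``cleanest route'' (asymptotics of $\log S(m)$ near $m\sim 3N/4$ via Euler--Maclaurin on the explicit formula of Lemma~\ref{lemma_u_v_properties}(ii)) is a valid alternative, but it is precisely the approach of the earlier reference \cite{Kuz_2015} that this paper explicitly sets out to replace with a shorter argument; if you go that way you should recognize you are reproducing the old proof, not the new one. You also omit the point needed to control the tail in the normalized quantities: to pass from $\tabs{k}\le N^{1/2+\delta}$ to all of $I_N$, the paper uses that $\tabs{\tilde{\ve U}_n(k)}$ is nonincreasing in $\tabs{k}$, which transfers the super-polynomial smallness at $\tabs{k}=N^{1/2+\delta}$ to the whole tail; without this monotonicity observation the tail estimate for the product (as opposed to the Gaussian) is not automatic.
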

\begin{proof}
Let us first prove that $\tnorm{\ve a}=O(N^{-1+\epsilon})$. 
We define the vector ${\tilde{\ve U}_n} \in \r^N$ via 
\begin{equation}\label{def_a_n}
{\tilde{\ve U}_n}(k)=\sqrt[4]{\omega/\pi} \times \prod\limits_{j=n+1}^{\Nt} \big( 1 - (t_k/t_j)^2 \big), \;\;\; k\in I_N.
\end{equation}
Denote $\delta = \min(1/16, \epsilon/4)$. Formula \eqref{estimate_tk_omega_k} gives us 
$t_k-\sqrt{\omega} k=O(N^{-1+3\delta})$, in particular $t_k=O(N^{\delta})$
for $|k|\le N^{1/2+\delta}$. 
Next, for $n+1\le j \le \Nt$ we have $\pi/4+O(N^{-1}) \le \pi j/N  \le \pi/2$, thus $\sin(\pi j/N)>1/2$ and
 $|t_j|>C N^{1/2}$ for $C=1/\sqrt{2\pi}$ {when $N$ is large enough}. We conclude that $(t_k/t_j)^2=O(N^{-1+2\delta})$
 for $|k|\le N^{1/2+\delta}$ and $n+1\le j \le \Nt$. Using the approximation $\ln(1-x)=-x+O(x^2)$, we obtain 
 for $|k|\le N^{1/2+\delta}$ 
\begin{align}\label{ll}
\nonumber
\ln \Big[ \sqrt[4]{\pi/\omega} \; {\tilde{\ve U}_n}(k)\Big]&=- t_k^2 \sum\limits_{j=n+1}^{\Nt} t_j^{-2} + O(N)\times O(N^{-2+4\delta})
\\&=
-\frac{t_k^2}{2} \times \frac{\pi}{N}  \sum\limits_{j=n+1}^{\Nt} \frac{1}{\sin(\pi k/N)^2}+O(N^{-1+4\delta}).
\end{align}
Next, we use the Riemann sum approximation 
$$
\frac{\pi}{N}  \sum\limits_{j=n+1}^{\Nt} \frac{1}{\sin(\pi k/N)^2}=
\int_{\pi /4}^{\pi/2} \frac{\d x}{\sin(x)^2}+O(N^{-1})=
\cot(\pi /4)+O(N^{-1})=1+O(N^{-1}). 
$$
Combining the above two computations and the facts $t_k-\sqrt{\omega} k=O(N^{-1+3\delta})$ and $t_k=O(N^{\delta})$
we obtain 
$$
\ln \Big[ \sqrt[4]{\pi/\omega} \; {\tilde{\ve U}_n}(k)\Big]=- \omega k^2/2+O(N^{-1+4\delta}), \;\;\; |k|\le N^{1/2+\delta} 
$$
which is equivalent to
\begin{equation}\label{eqn_a_g}
{\tilde{\ve U}_n}(k)= \ve G(k)(1+O(N^{-1+4\delta})), \;\;\; |k|\le N^{1/2+\delta} 
\end{equation}
due to \eqref{def_G_k}.

Now, let us consider a vector ${\tilde{\ve a}} \in \r^N$ having elements ${\tilde{\ve a}}(k)=P(t_k) ({\tilde{\ve U}_n}(k)-\ve G(k))$ for $k\in I_N$. 
Note that we have $P(t_k)\ve G(k)=O(N^{-2})$ for $|k|>N^{1/2+\delta}$. Using this result, the fact that 
$|{\tilde{\ve U}_n}(k+1)|\le |{\tilde{\ve U}_n}(k)|$ and formula \eqref{eqn_a_g}, we conclude that we also have $P(t_k){\tilde{\ve U}_n}(k)=O(N^{-2})$ for
$|k|>N^{1/2+\delta}$, thus ${\tilde{\ve a}}(k)=O(N^{-2})$ for $|k|>N^{1/2+\delta}$. We apply the above results and use Lemmas 
\ref{lem:g:int} and \ref{lem:tk:k} and obtain
\begin{align}\label{norm_a_g_computation}
\nonumber
\tnorm{{\tilde{\ve a}}}^2&=
\sum\limits_{\substack{k\in I_N \\ \tabs{k}\le N^{1/2+\delta}}}  P(t_k)^2({\tilde{\ve U}_n}(k) - \ve G(k))^2
+\sum\limits_{\substack{k\in I_N \\ \tabs{k}>N^{1/2+\delta}}}  P(t_k)^2({\tilde{\ve U}_n}(k) - \ve G(k))^2 \\
&=O(N^{-2+8\delta})\sum\limits_{\substack{k\in I_N \\ \tabs{k}\le N^{1/2+\delta}}}  P(t_k)^2 \ve G(k)^2
+O(N)\times O(N^{-4})
\\  \nonumber
&=O(N^{-2+8\delta})
\sqrt{\omega/\pi} \sum\limits_{k\in I_N}  P(t_k)^2 e^{-\omega k^2}
+O(N^{-3})
\\ 
\nonumber
&=O(N^{-2+8\epsilon})\times \frac{1}{\sqrt{\pi}} \int_{{\mathbb R}} P(x)^2e^{-x^2} \d x+O(N^{-3})=O(N^{-2+8\delta})
\end{align}
and we conclude that $\tnorm{{\tilde{\ve a}}}=O(N^{-1+4\delta})=O(N^{-1+\epsilon})$. Considering a constant polynomial $P\equiv 1$ we conclude that 
$\tnorm{{\tilde{\ve U}_n}}=\tnorm{\ve G}+O(N^{-1+ \epsilon})=1+O(N^{-1+ \epsilon})$.
Since $\ve U_n {{} / \tnorm{\ve U_n}} = {\tilde{\ve U}_n} / \tnorm{{\tilde{\ve U}_n}}$ the above two facts give us the result: the norm of the vector $\ve a$  is $O(N^{-1+\epsilon})$.

Now we will prove that that $\tnorm{\ve b}=O(N^{-1+\epsilon})$. 
In this case we define 
\begin{equation}\label{def_a_n2}
{\tilde{\ve V}_n}(k)=\sqrt[4]{4\omega/\pi} \times t_k \sqrt{1-\omega t_k^2/2} \times \prod\limits_{j=n+1}^{\Nmt} \big( 1 - (t_k/t_j)^2 \big), \;\;\; k\in I_N.
\end{equation}
Again, we set $\delta = \min(1/16, \epsilon/4)$. Then for $|k|\le N^{1/2+\delta}$ we have 
$$
\sqrt{1-\omega t_k^2/2}=1+O(N^{-1+2\delta}), 
$$ 
since $t_k=O(N^{\delta})$ and $\omega=2\pi /N$. Repeating the calculation in \eqref{ll} we conclude that 
\begin{equation}\label{eqn_a_g2}
{\tilde{\ve V}_n}(k)= \sqrt{2} t_k \ve G(k)(1+O(N^{-1+4\delta})), \;\;\; |k|\le N^{1/2+\delta}. 
\end{equation}
The rest of the proof proceeds as above. Note that Lemmas \ref{lem:g:int} and \ref{lem:tk:k}  tells us that the norm of the vector 
${\ve H} \in \r^N$ having elements
${\ve H}(k)=\sqrt{2} t_k \ve G(k)$ is $1+O(N^{-1+\epsilon})$ and that $\ve V_n/\tnorm{\ve V_n}={\tilde{\ve V}_n}/\tnorm{{\tilde{\ve V}_n}}$. 
The details are left to the reader. 
\end{proof}

\begin{remark}
Using the methods from the proof of Lemma \eqref{lem:u:g} one could prove the following, more general result. Take 
 any real polynomial $P$ that does not depend on $N$ and any $c\in (0,1/2)$ and define 
 a vector $\ve a \in \r^N$ via 
$$
\ve a(k)=P(t_k) \Big (\ve U_n(k)- \big( \omega \cot(\pi c)/\pi \big)^{1/4}e^{-\cot(\pi c) \omega k^2/2}\Big), \;\;\; k\in I_N. 
$$  
Then $\tnorm{\ve a}=O(N^{-1+\epsilon})$ as $N\to +\infty$ and $n=cN + O(1)$.
  
The above result tells us that the vectors $\ve u_n$, defined in \eqref{def_u_n}, are discrete analogues of Gaussian functions 
$$
f_c(x)=\cot(\pi c)^{1/4} e^{-\cot(\pi c)x^2/2},
$$
provided that $n=CN+O(1)$ and $N$ is large. Thus the discrete Fourier transform identity $\fourier \ve u_n=\ve u_{\Nt-n}$ that we established in Theorem \ref{thm:gauss} is the counterpart of the continuous Fourier transform identity
$\mathcal F f_c=f_{1/2-c}$. 
\end{remark}


As an immediate consequence of the above lemmas we have the following result.

\begin{corollary}\label{cor:u:int}
${}$
\begin{itemize}
\item[(i)] If $n = N/4 + O(1)$, $P$, $Q$ are polynomials (that do not depend on $N$) and $\ve A(k) = P(t_k) \ve U_n(k)$, $\ve B(k) = Q(t_k) \ve U_n(k)$
for $k\in I_N$, then for any $\epsilon>0$
\begin{equation}
 \tscalar{\ve A, \ve B}  = \frac{1}{\sqrt{\pi}} \int_\R P(x) Q(x) \exp(-x^2) \d x + O(N^{-1+\epsilon}),
\end{equation}
as $N\to +\infty$. 
\item[(ii)] Suppose that $n = N/4 + O(1)$ and that $P$, $Q$ are polynomials (that do not depend on $N$) such that $Q(x) \exp(-x^2/2)$ is the continuous Fourier transform of $P(x) \exp(-x^2/2)$. If $\ve A(k) = P(t_k) \ve U_n(k)$ and $\ve B(k) = Q(t_k) \ve G(k)$ for $k\in I_N$, then
for any $\epsilon>0$
\begin{equation}
 \tnorm{\fourier \ve A - \ve B} =  O(N^{-1+\epsilon}),
\end{equation}
as $N\to +\infty$. 
\end{itemize}
\end{corollary}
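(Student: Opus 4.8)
The plan is to prove both statements by a short chain of approximations, peeling off one source of error at a time, each handled by one of the three preceding lemmas. Throughout, introduce the auxiliary vectors $\ve A'(k) := P(t_k) \ve G(k)$ and $\hat{\ve A}(k) := P(\sqrt{\omega} k) \ve G(k)$ for $k \in I_N$, and likewise $\ve B'$, $\hat{\ve B}$ with $Q$ in place of $P$; note that $\hat{\ve A}(k) = \sqrt[4]{\omega/\pi}\, P(\sqrt{\omega} k) e^{-\omega k^2/2}$ is precisely the kind of vector appearing in Lemma~\ref{lem:g:int}. The reductions we will use repeatedly are: $\tnorm{\ve A - \ve A'} = O(N^{-1+\epsilon})$, which is Lemma~\ref{lem:u:g} (this is where the hypothesis $n = N/4 + O(1)$ enters); $\tnorm{\ve A' - \hat{\ve A}} = O(N^{-1+\epsilon})$ and $\tnorm{\ve B' - \hat{\ve B}} = O(N^{-1+\epsilon})$, which are instances of Lemma~\ref{lem:tk:k}; and $\tnorm{\hat{\ve A}} = O(1)$, $\tnorm{\hat{\ve B}} = O(1)$, which follow from Lemma~\ref{lem:g:int}(i) applied with $P=Q$, since $\tnorm{\hat{\ve A}}^2 = \tfrac{1}{\sqrt{\pi}} \int_{\R} P(x)^2 e^{-x^2} \d x + O(e^{-(\pi/2-\epsilon)N})$.

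For part~(i), I would first combine these reductions to get $\tnorm{\ve A - \hat{\ve A}} = O(N^{-1+\epsilon})$ and $\tnorm{\ve B - \hat{\ve B}} = O(N^{-1+\epsilon})$, and hence, by the triangle inequality together with $\tnorm{\hat{\ve A}}, \tnorm{\hat{\ve B}} = O(1)$, also $\tnorm{\ve A}, \tnorm{\ve B} = O(1)$. Then, writing $\tscalar{\ve A, \ve B} - \tscalar{\hat{\ve A}, \hat{\ve B}} = \tscalar{\ve A - \hat{\ve A}, \ve B} + \tscalar{\hat{\ve A}, \ve B - \hat{\ve B}}$ and applying Cauchy--Schwarz to each term, I obtain $\tscalar{\ve A, \ve B} = \tscalar{\hat{\ve A}, \hat{\ve B}} + O(N^{-1+\epsilon})$. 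Finally, $\tscalar{\hat{\ve A}, \hat{\ve B}} = \sqrt{\omega/\pi} \sum_{k \in I_N} P(\sqrt{\omega} k) Q(\sqrt{\omega} k) e^{-\omega k^2}$, and Lemma~\ref{lem:g:int}(i) identifies this as $\tfrac{1}{\sqrt{\pi}} \int_{\R} P(x) Q(x) e^{-x^2} \d x + O(e^{-(\pi/2-\epsilon)N})$, which is the claim.

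For part~(ii), I would run the same reductions but transport the errors through $\fourier$ using its unitarity; here $\ve B$ already equals $\ve B'$ in the notation above. From the triangle inequality,
\[
\tnorm{\fourier \ve A - \ve B} \le \tnorm{\fourier(\ve A - \ve A')} + \tnorm{\fourier(\ve A' - \hat{\ve A})} + \tnorm{\fourier \hat{\ve A} - \hat{\ve B}} + \tnorm{\hat{\ve B} - \ve B},
\]
and by unitarity the first two terms equal $\tnorm{\ve A - \ve A'}$ and $\tnorm{\ve A' - \hat{\ve A}}$, both $O(N^{-1+\epsilon})$ by Lemmas~\ref{lem:u:g} and~\ref{lem:tk:k}; the last term is $\tnorm{(Q(t_k) - Q(\sqrt{\omega} k)) \ve G(k)} = O(N^{-1+\epsilon})$, again by Lemma~\ref{lem:tk:k}. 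For the middle term, the hypothesis that $Q(x) e^{-x^2/2}$ is the continuous Fourier transform of $P(x) e^{-x^2/2}$ is exactly what is needed to apply Lemma~\ref{lem:g:int}(ii) to $\hat{\ve A}$ and $\hat{\ve B}$ (they differ from the vectors in that lemma only by the common scalar $\sqrt[4]{\omega/\pi} = O(1)$), giving $\tnorm{\fourier \hat{\ve A} - \hat{\ve B}} = O(e^{-(\pi/4-\epsilon)N})$. Summing the four bounds yields $\tnorm{\fourier \ve A - \ve B} = O(N^{-1+\epsilon})$.

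I do not expect any genuine difficulty: the statement is advertised as an immediate consequence of the lemmas, and every step is of triangle-inequality or Cauchy--Schwarz type. The only point deserving care is the bookkeeping of normalizing constants --- the factor $\sqrt{\omega}$ hidden in $t_k$, the factor $\sqrt[4]{\omega/\pi}$ in $\ve G$, and the $1/\sqrt{N}$ in the definition of $\fourier$ must all be tracked so that the discrete objects $\hat{\ve A}$, $\hat{\ve B}$, $\fourier \hat{\ve A}$ match the hypotheses of Lemma~\ref{lem:g:int} exactly; once this is arranged, the three lemmas plug in directly.
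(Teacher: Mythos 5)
Your proof is correct and follows exactly the route the paper intends: the corollary is stated in the paper without an explicit proof (``As an immediate consequence of the above lemmas\ldots''), and your chain of reductions through the auxiliary vectors $\ve A' = P(t_k)\ve G(k)$ and $\hat{\ve A} = P(\sqrt{\omega}k)\ve G(k)$, invoking Lemma~\ref{lem:u:g} (to pass from $\ve U_n$ to $\ve G$), Lemma~\ref{lem:tk:k} (to pass from $t_k$ to $\sqrt{\omega}k$), and Lemma~\ref{lem:g:int} (to evaluate the remaining discrete sums), is precisely how one fills in the details. The Cauchy--Schwarz split in part~(i) and the unitarity of $\fourier$ in part~(ii) are the right tools, and the bookkeeping of the constant $\sqrt[4]{\omega/\pi}$ (which, being $O(N^{-1/4})$, is harmless) is handled correctly.
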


%

\vspace{0.25cm}
\noindent
{\bf Proof of Theorem \ref{thm_main2}:}
The proof of Theorem \ref{thm_main2} will proceed by induction with respect to $n$, and first we will consider the case of even $n$.
Let us first present several observations. We denote
$$
\nu(n)  = \len(\ve T_n) = \tfloor{(N + n + 2)/4} .
$$
From the definition of vectors $\ve T_n$ given on page \pageref{page_def_T} it is clear that if $n$ is even (and admissible) then $\ve T_n$ is a linear combination of $\ve U_{\Nt - \nu(n)}, \ve U_{\Nt - \nu(n) + 1}, \ldots, \ve U_{\nu(n)}$. 
Formula \eqref{def_u_n} implies that for $0\le m < n \le \Nt$ we have 
$$
\ve U_m(k)= C \prod\limits_{j=m+1}^n \big( 1-(t_k/t_j)^2\big) \times \ve U_n(k), \;\;\; k\in I_N,
$$
for some constant $C=C(m,n,N)$. 
From the above two facts we can conclude that for every even and admissible $n$ we have
\begin{equation}\label{eqn_T_P_U}
 \ve T_n(k)  = P_n(t_k) \ve U_{\nu(n)}(k)
\end{equation}
for some even polynomial $P_n$ of degree
$$
 \deg(P_n) = 2(\nu(n) - (\Nt - \nu(n))) = 4 \nu(n) - 2 \Nt .
$$
It is easy to see that the sequence $\deg(P_n)$ for $n = 0, 2, 4, 6, \ldots$ is equal to either $0, 4, 4, 8, 8, 12, 12, \ldots$ (if 
$N= 0$ or $N = 1$ modulo 4) or $2, 2, 6, 6, 10, 10, 14, \ldots$ (otherwise). In particular, $\deg(P_n)$ is always equal to either $n$ or $n + 2$.

For $\gamma \in \r$ and a vector $\ve a=\{\ve a(k)\}_{k\in I_N}$ we will write $\ve a=\bfo(N^{\gamma})$ if $\tnorm{\ve a}=O(N^{\gamma})$ 
as $N\to \infty$. Thus our goal is to prove that for every even $m$ we have $\ve T_m=\bfpsi_m+\bfo(N^{-1+\epsilon})$
as $N \to \infty$. Note this is true for $m=0$ as was established in Lemma \ref{lem:u:g}.

Now, fix an even $m\ge 2$. Suppose that we have already chosen the signs of $\{\ve T_n\}_{n=0,2,\dots,m-2}$ in a proper way and have proved that 
 $\ve T_n=\bfpsi_n+\bfo(N^{-1+\epsilon})$
for $n =0,2,\dots,m-2$. Our goal is to prove that $\ve T_m=\bfpsi_m+\bfo(N^{-1+\epsilon})$. 

Let us define the normalized Hermite polynomials $h_n(x):= (2^n n!)^{-1/2} H_n(x)$, where $H_n$ are the classical Hermite polynomials (see \eqref{def_Hermite_psin}).  Note that with this normalization we have $\bfpsi_n(k)=h_n(\sqrt{\omega}k) \ve G(k)$
(see \eqref{def_Hermite_psin}) and \eqref{def_G_k}). 
Next, let $\gamma_{j}$ be the coefficients in the expansion of the polynomial $P_m(x)$ in the basis $h_j(x)$, that is 
$$
P_m(x)=\gamma_0 h_0(x)+\gamma_2 h_2(x)+\dots+\gamma_{m} h_m(x)+\gamma_{m+2} h_{m+2}(x). 
$$
Note that here we have used the fact that $P_m$ is an even polynomial of $\deg(P_n)\le m+2$. 
Let us define the vectors $\{\ve A_j\}_{j=0,2,\dots,m+2}$ via $\ve A_j(k) = h_j(t_k) \ve U_{\nu(m)}(k)$, $k\in I_N$, 
so that we have 
\begin{equation}\label{eqn_T_m_A_j}
\ve T_m=\sum\limits_{j=0,2,\dots,m+2} \gamma_j \, \ve A_j 
\end{equation}
According to Lemmas \ref{lem:tk:k} and \ref{lem:u:g}, we have $\ve A_j=\bfpsi_j+\bfo(N^{-1+\epsilon})$ for all $j \in \{0,2,\dots,m+2\}$. 
Lemma \ref{lem:g:int} implies 
\begin{equation}\label{scalar_product_psi_i_psi_j}
\tscalar{\bfpsi_{i},\bfpsi_j}=\delta_{i,j}+O(e^{-N}), \;\; {\textnormal{ for }} \; i,j \in \{0,2,\dots,m+2\},
\end{equation}
 thus 
 $\tscalar{\ve A_{i},\ve A_j}=\delta_{i,j}+O(N^{-1+\epsilon})$ for $i,j \in \{0,2,\dots,m+2\}$. Therefore, the Gramian matrix
 of vectors $\{\ve A_j\}_{j=0,2,\dots,m+2}$ converges to the identity matrix as $N\to +\infty$, and the same must be true for the inverse of this Gramian matrix. This proves that the norm of a linear combination of vectors $\{\ve A_j\}_{j=0,2,\dots,m+2}$ is comparable (uniformly as $N \to \infty$) with the norm of the coefficients. Since $\tnorm{\ve T_m}=1$, we conclude that the coefficients $\{\gamma_j\}_{j=0,2,\dots,m+2}$ are uniformly bounded as $N\to +\infty$. 
 
Using the above result combined with formula  \eqref{eqn_T_m_A_j} and the estimate $\ve A_j=\bfpsi_j+\bfo(N^{-1+\epsilon})$
we conclude that
\begin{equation*}
\ve T_m=\sum\limits_{j=0,2,\dots,m+2} \gamma_j \, \bfpsi_j + \bfo(N^{-1+\epsilon}). 
\end{equation*}

Next, by induction hypothesis, we have $\ve T_n=\bfpsi_n+ \bfo(N^{-1+\epsilon})$, for $n \in \{0, 2, \ldots, m - 2\}$. 
Using this result and orthogonality of the vectors $\{\ve T_n\}_{0\le n <N}$  we conclude that 
\begin{align*}
0=\tscalar{\ve T_n,\ve T_m}&=\sum\limits_{j=0,2,\dots,m+2} \gamma_j \, \tscalar{\ve T_n, \bfpsi_j} + O(N^{-1+\epsilon})\\
&= \sum\limits_{j=0,2,\dots,m+2} \gamma_j \, \tscalar{\bfpsi_n, \bfpsi_j} + O(N^{-1+\epsilon})=\gamma_n+O(N^{-1+\epsilon}),
\end{align*}
for $n \in \{0, 2, \ldots, m - 2\}$. In other words, we have proved that $\gamma_n=O(N^{-1+\epsilon})$ for all
$n \in \{0, 2, \ldots, m - 2\}$; combining this result with the fact $\tnorm{\ve \Psi_j}=1+O(e^{N})$ we obtain 
\begin{equation}\label{eqn_T_m_final}
\ve T_m=\gamma_m \, \bfpsi_m +\gamma_{m+2} \, \bfpsi_{m+2}+ \bfo(N^{-1+\epsilon}). 
\end{equation}

Our next goal is to show that $\gamma_{m+2}=O(N^{-1+\epsilon})$. First, we use the fact that $\ve T_m$ is an eigenvector of the 
DFT to calculate
$$
\fourier \ve T_m=(-\i)^{m} \ve T_m=(-\i)^m \gamma_m \, \bfpsi_m + (-\i)^m \gamma_{m+2} \, \bfpsi_{m+2}+ \bfo(N^{-1+\epsilon}).
$$
At the same time, we can evaluate the same expression via Lemma \ref{lem:g:int}: this gives us 
$$
\fourier \ve T_m=
\gamma_m \, \fourier \bfpsi_m +\gamma_{m+2} \, \fourier \bfpsi_{m+2}+ \bfo(N^{-1+\epsilon})=
(-\i)^m \gamma_m \, \bfpsi_m + (-\i)^{m+2} \gamma_{m+2} \, \bfpsi_{m+2}+ \bfo(N^{-1+\epsilon}).
$$
Comparing the above two formulas we conclude that $\gamma_{m+2}=O(N^{-1+\epsilon})$. Thus 
$\ve T_m=\gamma_m \, \bfpsi_m + \bfo(N^{-1+\epsilon})$, and since $\tnorm{\ve T_m}=1$ we conclude that 
$|\gamma_m|=1+O(N^{-1+\epsilon})$. This implies that $\ve T_m= \bfpsi_m + \bfo(N^{-1+\epsilon})$
or $-\ve T_m= \bfpsi_m + \bfo(N^{-1+\epsilon})$ and this ends the induction step.

The proof of the identity $\ve T_n=\bfpsi_n+\bfo(N^{-1+\epsilon})$ when $n$ is odd follows the same steps. 
We provide only a sketch of the proof and leave all the details to the reader.  
First of all, we note that if $n$ is odd then $\ve T_n$ is a linear combination of $\ve V_{\Npt - \nu(n)}, \ve V_{\Npt - \nu(n) + 1}, \ldots, \ve V_{\nu(n)}$, and therefore
\begin{equation*}
 \ve T_n(k)  = Q_n(t_k) \ve V_{\nu(n)}(k)
\end{equation*}
for an even polynomial $Q_n$ of degree
$$
 \deg(Q_n) = 2(\nu(n) - (\Npt - \nu(n))) = 4 \nu(n) - 2 \Npt .
$$
Again, it is easy to see that the sequence $\deg(Q_n)$ for $n = 1, 3, 5, 7, \ldots$ is equal to either $0, 4, 4, 8, 8, 12, 12, \ldots$ (if 
$N= 0$ or $N = 3$ modulo 4) or $2, 2, 6, 6, 10, 10, 14, \ldots$ (otherwise). In particular, $\deg(Q_n)$ is always equal to either $n-1$ or $n + 1$.

Thus our goal is to prove that for every odd $m$ we have $\ve T_m=\bfpsi_m+\bfo(N^{-1+\epsilon})$
as $N \to \infty$. Note this is true for $m=1$ as was established in Lemma \ref{lem:u:g}. Now, fix an odd $m\ge 3$. 
Suppose that we have already chosen the signs of $\{\ve T_n\}_{n=1,3,\dots,m-2}$ in a proper way and have proved that 
 $\ve T_n=\bfpsi_n+\bfo(N^{-1+\epsilon})$
for $n =1,3,\dots,m-2$. Our goal is to prove that $\ve T_m=\bfpsi_m+\bfo(N^{-1+\epsilon})$. 

We define $\gamma_{j}$ to be the coefficients in the expansion of the odd polynomial $\sqrt{2} xQ_m(x)$ in the basis $h_j(x)$. This is equivalent to writing
$$
Q_m(x)=\gamma_1 h_1(x)/({\sqrt 2}x)+\gamma_3 h_3(x)/({\sqrt 2}x)+\dots+\gamma_{m} h_m(x)/({\sqrt 2}x)+\gamma_{m+2} h_{m+2}(x)/({\sqrt 2}x). 
$$
Note that for odd $j$ the function $h_j(x)/x$ is an even polynomial (since $h_j$ is an odd polynomial in this case). 
Let us define the vectors $\{\ve B_j\}_{j=1,3,\dots,m+2}$ via $\ve B_j(k) = h_j(t_k)/(\sqrt{2}t_k) \times \ve V_{\nu(m)}(k)$, 
$k\in I_N$, so that we have 
\begin{equation*}
\ve T_m=\sum\limits_{j=1,3,\dots,m+2} \gamma_j \, \ve B_j 
\end{equation*}
According to Lemmas \ref{lem:tk:k} and \ref{lem:u:g}, we have $\ve B_j=\bfpsi_j+\bfo(N^{-1+\epsilon})$ for all $j \in \{1,3,\dots,m+2\}$. 
The rest of the proof proceeds exactly in the same way as in the case of even $m$: 
first we use orthogonality of $\ve T_n$ to show that $\gamma_j=O(N^{-1+\epsilon})$ for $j=1,2,\dots,m-2$ and then we use the fact that 
$\ve T_m$ is an {eigenvector} of the DFT to prove that $\gamma_{m+2}=O(N^{-1+\epsilon})$. 
The details are left to the reader. 
\qed

\section{Numerical computation of the minimal Hermite-type basis}\label{section_numerics}

The eigenvectors $\ve T_n$ can be efficiently evaluated numerically using the three-term recurrence relation stated in Theorem~\ref{thm_3_term_recurrence}. The algorithm is quite straightforward. First we compute the vectors $\ve T_0, \ve T_1, \ve T_2, \ve T_3$ via
\begin{align*}
 \ve T_0 & = c_0 (\ve u_{K_0} + \ve u_{\Nt - K_0}), \\
 \ve T_1 & =  c_1 (\ve v_{K_1} + \ve v_{\Npt - K_1}), \\
 \ve T_2 & =  c_2 (\ve u_{K_2} - \ve u_{\Nt - K_2}), \\
 \ve T_3 & =  c_3 (\ve v_{K_3} - \ve v_{\Npt - K_3}),
\end{align*}
where $K_m = \tfloor{(N + 2 + m) / 4}$, $c_i$ are the normalisation constants that make $\ve T_i$ unit vectors and the Gaussian-type vectors $\ve u_n$ and the modified Gaussian-type vectors $\ve v_n$ are computed by expressions given in Definition~\ref{definition_un_vn}
or in Lemma~\ref{lemma_u_v_properties}. Then we compute the remaining vectors $\{\ve T_n\}_{4\le n \le N-1}$ using the three-term recursion described in Theorem~\ref{thm_3_term_recurrence}. While doing this, we need to remember to set $\ve T_{N-1}$ to the ``ghost" vector $\ve T_N$ when $N$ is even.

Let us discuss the computational complexity of the above algorithm. It is easy to see that the number of arithmetic operations needed to evaluate the initial vectors $\ve T_0, \ve T_1, \ve T_2, \ve T_3$ is linear in $N$, and the same is true for each recursive step. 
Thus the vector  $\ve T_n$ can be evaluated using $O(N n)$ arithmetic operations, and the entire basis requires only $O(N^2)$ operations.
This bound is clearly optimal: the complete basis consists of $N^2$ numbers, so it cannot be evaluated using fewer than $O(N^2)$ operations. Considering the memory requirement, we note that we need $O(N^2)$ memory to compute the entire basis (since we need to store the entire basis in memory) and we need only $O(N)$ memory if our goal is to compute vector $\ve T_n$ for one fixed value of $n$: this last statement is true since the above recursive algorithm, based on Theorem~\ref{thm_3_term_recurrence}, requires us to store only eight vectors $\{\ve T_l\}_{n-8\le l \le n-1}$ to compute vectors $\ve T_m$ with $m\ge n$. Clearly, these bounds for memory requirements are also optimal. 

However, the time complexity of the above algorithm is worse than $O(N^2)$ due to rapid loss of precision. It is easy to see where this loss of precision comes from. Note that for $N$ large 
we have $\osc \ve T_n = 4 \ve T_n + {\bf o}(1)$, thus $a_n=4+o(1)$ and $b_n=o(1)$. Thus, when we calculate $\ve T_{n+4}$ 
via \eqref{three_term_recurrence}, first we calculate the difference $\osc \ve T_n-a_n \ve T_n$ and we have subtract numbers of similar magnitude, then we normalize the resulting vector
$\osc \ve T_n-a_n \ve T_n-b_{n-4} \ve T_{n-4}$ (which is $\bf o(1)$)  by multiplying it by a large number $1/b_n$. 
Subtracting numbers of similar magnitude and multiplying the result by a large number inevitably results in loss of precision. Empirically, we have found that evaluation of $\ve T_{N-1}$ for $N = 256$ leads to loss of approximately $110$ digits of precision. This increases to over $440$ digits when $N = 1024$. For this reason, high-precision arithmetic is necessary even for relatively small values of $N$.

To facilitate applications, we have pre-computed the minimal Hermite-type basis for all $N$ less than or equal to $1024$, as well as a few larger values of $N$, and made these 
results publicly available on the Internet at \href{https://drive.google.com/open?id=0B1hpG-8rGMJcQnhXbE8tR3NZVXM}{\texttt{drive.google.com/open?id=0B1hpG-8rGMJcQnhXbE8tR3NZVXM}}. We used a \emph{Wolfram Mathematica} script to generate the vectors $\ve T_n$; the source code is given in Listing~\ref{code_t} (the output has been generated using the code given in Listing~\ref{code_print}). For $N = 1024$, and using interval arithmetic with $1000$ digits of precision, the script takes about 100 seconds on a modern computer. To test our results we have also computed the minimal Hermite-type basis using a Fortran90 program and David Bailey's MPFUN90 multiple precision package, this code can be found at \href{www.math.yorku.ca/~akuznets/math.html}{\texttt{www.math.yorku.ca/\textasciitilde akuznets/math.html}}.


\bibliographystyle{abbrv}
\bibliography{references}

\begin{thebibliography}{1}

\bibitem{Dickinson}
B.~W. Dickinson and K.~Steiglitz.
\newblock Eigenvectors and functions of the discrete {F}ourier transform.
\newblock {\em IEEE Transactions on Acoustics, Speech and Signal Processing},
  30(1):25--31, 1982.

\bibitem{Donoho_1989}
D.~L. Donoho and P.~B. Stark.
\newblock Uncertainty principles and signal recovery.
\newblock {\em SIAM Journal on Applied Mathematics}, 49(3):906--931, 1989.

\bibitem{Grunbaum}
F.~A. Grunbaum.
\newblock The eigenvectors of the discrete {F}ourier transform.
\newblock {\em J. Math. Anal. and Appl.}, 88(2):355--363, 1982.

\bibitem{Hardy}
G.~H. Hardy.
\newblock A theorem concerning {F}ourier transforms.
\newblock {\em Journal of the London Mathematical Society}, s1-8(3):227--231,
  1933.

\bibitem{Kong_2008}
F.~N. Kong.
\newblock Analytic expressions of two discrete {H}ermite-{G}auss signals.
\newblock {\em IEEE Transactions on Circuits and Systems II: Express Briefs},
  55(1):56--60, 2008.

\bibitem{Kuz_2015}
A.~Kuznetsov.
\newblock Explicit {H}ermite-type eigenvectors of the discrete {F}ourier
  transform.
\newblock {\em SIAM Journal on Matrix Analysis and Applications},
  36(4):1443--1464, 2015.

\bibitem{Mehta1987}
M.~L. Mehta.
\newblock Eigenvalues and eigenvectors of the finite {F}ourier transform.
\newblock {\em J. Math. Phys.}, 28(4):781--785, 1987.

\bibitem{Pei_Chang_2016}
S.~C. Pei and K.~W. Chang.
\newblock Optimal discrete {G}aussian function: the closed-form functions
  satisfying {T}ao's and {D}onoho's uncertainty principle with {N}yquist
  bandwidth.
\newblock {\em IEEE Transactions on Signal Processing}, 64(12):3051--3064,
  2016.

\bibitem{Tao}
T.~Tao.
\newblock An uncertainty principle for cyclic groups of prime order.
\newblock {\em Mathematical Research Letters}, 12(1):121--127, 2005.

\end{thebibliography}

\newcommand{\code}{\texttt}

\begin{listing}[p]
{\scriptsize
\begin{minted}{Mathematica}
numer = (N[Interval[#], prec]&); (* function to be used for numerical evaluation *)
k0 = -Ceiling[nn/2] + 1; (* auxiliary: evaluate vectors a[k0],a[k0+1],...,a[k0+nn-1] *)
omega = 2 Pi/nn;
S[k_?IntegerQ] := S[k] =
   If[2 k < nn,
      If[k > 0, Product[numer[2 Sin[omega j/2]], {j, 1, k}], numer[1]],
      nn/S[nn - 1 - k]
   ];
alpha[n_?IntegerQ] := alpha[n] = (* multiplied by S[n]^2 *)
   If[OddQ[nn],
      If[n > 0, Sqrt[S[2 n]], numer[1]],
      If[n > 0, Sqrt[S[2 n - 1] Sin[omega n/2]], numer[1/2]]
   ];
beta[n_?IntegerQ] := beta[n] = (* multiplied by S[n]^2 *)
   If[OddQ[nn],
      Sqrt[S[2 n - 1]],
      Sqrt[S[2 n - 1] Cos[omega n/2]]
   ];
u[n_?IntegerQ] := u[n] = Table[
   If[Abs[k] > n, 0,
      If[OddQ[nn],
         alpha[n] S[nn - n - 1 - k] S[nn - n - 1 + k] / nn^2,
         If[n < nn/2,
            alpha[n] S[nn - n - 1 - k] S[nn - n - 1 + k] Cos[omega k/2] / nn^2,
            numer[1/Sqrt[4 nn]]
         ]
      ]
   ],
   {k, k0, k0 + nn - 1}];
v[n_?IntegerQ] := v[n] = Table[
   If[Abs[k] > n, 0,
      If[OddQ[nn],
         beta[n] S[nn - n - 1 - k] S[nn - n - 1 + k] Sin[omega k] / nn^2,
         beta[n] S[nn - n - 1 - k] S[nn - n - 1 + k] 2 Sin[omega k/2] / nn^2
      ]
   ],
   {k, k0, k0 + nn - 1}];
T[0] := T[0] = Normalize[u[Floor[(nn + 2)/4]] + u[Floor[nn/4]]];
T[1] := T[1] = Normalize[v[Floor[(nn + 3)/4]] + v[Floor[(nn + 1)/4]]];
T[2] := T[2] = Normalize[u[Floor[(nn + 4)/4]] - u[Floor[(nn - 2)/4]]];
T[3] := T[3] = Normalize[v[Floor[(nn + 5)/4]] - v[Floor[(nn - 1)/4]]];
Lm := Lm = Table[numer[2 Cos[omega k]], {k, k0, k0 + nn - 1}]; (* multiplier for L *)
L[a_] := RotateLeft[a, 1] + RotateRight[a, 1] + Lm a;
If[EvenQ[nn], T[nn - 1] := T[nn - 1] = T[nn]]; (* set T[nn-1] to the ghost vector T[nn] *)
T[n_?IntegerQ] := T[n] = Block[{LT, Ta, t},
   LT = L[T[n - 4]];
   Ta = T[n - 4].LT;
   t = If[n < 8,
      LT - Ta T[n - 4],
      LT - Ta T[n - 4] - Tb[n - 8] T[n - 8]
   ];
   Tb[n - 4] = Norm[t];
   t / Tb[n - 4]
];
\end{minted}		
}
\caption{Mathematica script that was used to evaluate minimal Hermite-type eigenvectors $\ve T_n$, represented as \code{T[n]} in the code. Input parameters are \code{nn}, equal to the dimension $N$, and \code{precision}, the number of digits to be used in interval arithmetic calculations. Lazy evaluation with memoization is used. Interval arithmetic allows us to keep track of rounding errors.}
\label{code_t}
\end{listing}

\begin{listing}[p]
{\scriptsize
\begin{minted}{Mathematica}
digits = 400;
maxoutput = 100;
print[Interval[{a_, b_}]] := ToString[
   If[b - a > 10^(-digits - 2), Throw["Insufficient precision"], (* check precision *)
      If[Abs[a + b]/2 < 10^(-maxoutput), "0", (* treat zero in a separate way *)
         ScientificForm[
            (a + b)/2, (* number to be printed *)
            Min[maxoutput, digits + MantissaExponent[(a + b)/2][[2]]], (*number of significant digits to be printed*)
            NumberFormat -> (SequenceForm[#1, "e", #3] &)
         ]
      ]
   ]
];
Export["/path/to/file.txt", Table[print /@ T[n], {n, 0, nn - 1}], "Table"]
\end{minted}		
}
\caption{Mathematica script used for creating pre-evaluated tables of minimal Hermite-type eigenvectors. Only faithfully evaluated digits are printed: in case of excessive loss of precision execution is aborted.}
\label{code_print}
\end{listing}

\end{document}